\newtheorem{lemma}{Lemma}
\numberwithin{lemma}{section}
\newtheorem{assumption}{Assumption}
\numberwithin{assumption}{section}
\newtheorem{corollary}{Corollary}
\numberwithin{corollary}{section}
\newtheorem{definition}{Definition}
\numberwithin{definition}{section}
\newtheorem{proposition}{Proposition}
\numberwithin{proposition}{section}
\newtheorem{remark}{Remark}
\numberwithin{remark}{section}
\newtheorem{theorem}{Theorem}
\numberwithin{theorem}{section}
\newcommand{\R}{\mathbb{R}}
\newcommand{\xx}{\boldsymbol}
\newcommand{\www}[1]{\widehat{#1}}
\newcommand{\yy}{\displaystyle}
\newcommand{\abs}[1]{\left| {#1} \right|}
\newcommand{\Qh}{Q^m_{h}}
\newcommand{\Vh}{ \mathbf{V}^\ell_h}
\newcommand{\norm}[1]{{\Vert {#1}\Vert}}
\newcommand{\normVh}[1]{{\Vert {#1}\Vert}_{\Vh}}
\newcommand{\normQh}[1]{{\Vert {#1}\Vert}_{\Qh}}
\newcommand{\normE}[1]{{\Vert {#1}\Vert}_{\textrm{\upshape{E}}}}
\newcommand{\normQ}[1]{{\Vert {#1}\Vert}_{L^2(\Omega)}}
\newcommand{\normV}[1]{{\Vert {#1}\Vert}_{\xx V}}
\newcommand{\seminormJ}[1]{\abs{#1}_{\mathsf{J}}}
\newcommand{\seminorm}[1]{| {#1} |}
\newcommand{\jump}[2][]{\left\llbracket {#2} \right\rrbracket_{#1}}
\newcommand*\circled[1]{\tikz[baseline=(char.base)]{
            \node[shape=circle,draw,inner sep=0.8pt] (char) {#1};}}
\gdef\SetFigFont#1#2#3#4#5{%
  \reset@font\fontsize{#1}{#2pt}%
  \fontfamily{#3}\fontseries{#4}\fontshape{#5}%
  \selectfont}%
\begin{document}


\title
{
Stability analysis of polytopic Discontinuous Galerkin approximations of the
Stokes problem with applications to fluid-structure interaction problems
}

\author
{
Paola F. Antonietti$^{\sharp}$ and Lorenzo Mascotto$^{\flat}$ and Marco
Verani$^{\sharp}$ and Stefano Zonca$^{\sharp}$
}


\maketitle
\begin{center}
{
    \small $^\sharp$ MOX -- Modelling and Scientific Computing\\
    Dipartimento di Matematica, Politecnico di Milano\\
    Piazza Leonardo da Vinci, 20133 Milano, Italy\\
    {
        \tt
        paola.antonietti@polimi.it\\
        marco.verani@polimi.it\\
        stefano.zonca@polimi.it\\[2em]
    }
    \small $^\flat$ Fakult\"at f\"ur Mathematik, Universit\"at Wien, Austria\\
    {
        \tt
        lorenzo.mascotto@univie.ac.at\\[2em]
    }
}
\end{center}

\date{}

\noindent
{\bf Keywords}:
discontinuous Galerkin; \and
polytopic meshes; \and
fluid-structure interaction.
\vspace*{0.5cm}

\vspace*{0.5cm}

\begin{abstract}
\noindent We present a stability analysis of the Discontinuous Galerkin method
on polygonal and polyhedral meshes (PolyDG) for the Stokes problem.  In
particular, we analyze the discrete \textit{inf-sup} condition for different
choices of the polynomial approximation order of the velocity and pressure
approximation spaces.  To this aim, we employ a generalized \textit{inf-sup}
condition with a pressure stabilization term. We also prove a priori
$hp$-version error estimates in suitable norms.  We numerically check the
behaviour of the \textit{inf-sup} constant and the order of convergence with
respect to the mesh configuration, the mesh-size, and the polynomial degree.
Finally, as a relevant application of our analysis, we consider the PolyDG
approximation for a fluid-structure interaction problem and we numerically
explore the stability properties of the method.
\end{abstract}

\section{Introduction}\label{sec:intro}

It is well known that a crucial aspect involving the stability of the numerical
scheme associated with the Stokes problem is the \textit{inf-sup} condition that
establishes a constraint in the choice of the velocity and pressure discrete
spaces; see, e.g, \cite{brezzi1974existence,BrezziFortin}.  This aspect, in the
context of polygonal methods, is still under investigation and only few results
are present in the literature; see, e.g.,
\cite{cockburn2002local,di2010discrete,aghili2015hybridization,di2016discontinuous,da2017divergence,burman:hal-02519896}.

The Discontinuous Galerkin (DG) method handles meshes with elements of general
shape and has proved to be suited for the approximation of fluid and structure
models, possibly involving moving domains, see, e.g.
\cite{dumbser2018staggered,tavelli2018arbitrary}.  The discrete \textit{inf-sup}
condition for DG methods has been analyzed in the following works.
In \cite{cockburn2002local}, the Local DG method for the Stokes problem is
formulated in a conservative way, by introducing the stress as unknown.  Here,
meshes with hanging nodes and elements of different shape are considered,
provided that they are affinely-equivalent to an element of a fixed set of
reference elements.  Moreover, an \textit{inf-sup} condition and optimal order
estimates are proven, when the pair of polynomials of degree $k$ and $k-1$ is
chosen for the velocity and pressure spaces.  However, the formulation requires
a stability term for both the velocity and the pressure.
In \cite{schotzau2003stabilized}, the \textit{inf-sup} condition is proven for a
pressure stabilized formulation on hexahedral meshes allowing hanging nodes,
when the pair $\mathbb{Q}_k-\mathbb{Q}_k$ is chosen.
In \cite{di2010discrete,dipietro_ern_2010}, the authors show the
\textit{inf-sup} condition for equal-order approximation $\mathbb{P}_k$ for both
the velocity and pressure in the case of a pressure stabilized formulation on
meshes consisting of elements of various shape, provided that each element is
affinely-equivalent to one in a fixed set of reference elements, and admitting
hanging nodes.
In \cite{schotzau2002mixed}, the authors propose a mixed DG formulation without
pressure stabilization for the Stokes problem and show a priori error estimates.
The \textit{inf-sup} condition is proven for the pair of spaces
$\mathbb{Q}_k-\mathbb{Q}_{k-1}$ on tensor product meshes, possibly with hanging
nodes.
In \cite{toselli2002hp}, the \textit{inf-sup} condition is proven for the pairs
of spaces $\mathbb{Q}_k-\mathbb{Q}_{k-1}$ and $\mathbb{Q}_k-\mathbb{Q}_{k-2}$
without any pressure stabilization on quadrilateral and hexahedral meshes with
hanging nodes; see also
\cite{stenberg1996mixed,schotzau1999mixed,toselli2003mixed}.
Numerical tests showing the dependence of the \textit{inf-sup} constants are
performed for the pairs of spaces $\mathbb{Q}_{k}-\mathbb{Q}_{k'}$, with $k' =
k, k-1, k-2$.
In~\cite{hansbo2002discontinuous}, the \textit{inf-sup} condition is proven on
triangular and tetrahedral meshes without any pressure stabilization term for
the pair of spaces $\mathbb{P}_k-\mathbb{P}_{k-1}$ employing the
Brezzi-Douglas-Marini spaces.
In \cite{girault2005discontinuous}, the pair of spaces
$\mathbb{P}_k-\mathbb{P}_{k-1}$ with the Crouzeix-Raviart elements is used to
prove the \textit{inf-sup} condition on triangular meshes.

In this work, we consider the Discontinuous Galerkin method on polygonal and
polyhedral grids (PolyDG) that extends the standard DG method to polytopic
meshes; see, e.g.,
\cite{Basetal12,antonietti2013hp,WiKuMiTaWeDa2013,CangianiGeorgoulisHouston_2014,antonietti2016review,cangiani2016hp,antoniettim1,antoniettiF2020}.
In this framework, we study the discrete stability and well-posedness for the
Stokes problem, by presenting an analysis that covers at once the two- and
three- dimensional cases. Under suitable assumptions, we prove that the
\textit{inf-sup} constant is independent of the mesh size.  Notwithstanding, it
is not robust with respect to the polynomial degree and this restriction
propagates to the convergence analysis, with a deterioration of the convergence
in terms of the polynomial degree.  However, we provide numerical evidence that
the discrete \emph{inf-sup} constant has a  much milder dependence on the
polynomial degree in practice.  Moreover, the mesh assumptions seem to be too
restrictive and, in fact, the method results to be \textit{inf-sup} stable also
for pathological configurations.  In the two-dimensional case, we numerically
assess the robustness of the \textit{inf-sup} constant with respect to the mesh
size and the polynomial degree for different types of mesh elements, including
elements with degenerating edges, and we numerically estimate the order of
convergence to the mesh size and the polynomial degree.

Besides, with the aim of further exploring the relevance of our stability
analysis, we consider a fluid-structure interaction (FSI) problem where both the
Stokes and the elastodynamics equations are solved based on employing the PolyDG
method. In fact, it is well known that the study of FSI problems is of paramount
importance in many engineering and biomedical applications; see, e.g.,
\cite{kamakoti2004fluid,bouaanani2014effects,xu2013large,terahara2020heart,ghosh2020numerical,picelli2020topology},
where a fluid, for instance modeled via the Stokes equations, interacts with a
structure, modeled via the elastodynamics equations. In particular, a special
class of FSI applications that requires a lot of effort from the numerical
viewpoint arises under the large deformations condition occurring in
time-dependent processes.  Indeed, to correctly model such problems, ad-hoc
techniques are mandatory to deal with the movement of the structures.  A
classical strategy to overcome this issue is the employment of the Arbitrary
Lagrangian Eulerian (ALE) approach.  It consists in deforming the fluid grid
according to the structure displacement, yet maintaining a ``honouring'' mesh at
the fluid-structure interface and generating an arbitrary deformation of the
elements in the interior of the fluid mesh; see, e.g.,
\cite{donea2,hron2006monolithic,tello2020fluid}.  Another way that preserves the
alignment of the fluid and structure grids at the interface is to use approaches
based on remeshing and mesh-adaptation techniques; see, e.g.,
\cite{tezduyar2007modelling,borker2019mesh}.  A different category of approaches
are based on employing unfitted meshes that allow to keep the fluid grid fixed
in time, while the structure mesh is free to move; see, e.g.,
\cite{glowinski2001fictitious,Mittal05,zhang2007immersed,gerstenberger2008extended,Griffith12,Borazjani13,massing2015nitsche,Court15,Alauzet16,boffi2017fictitious,fedele2017patient,ager2019poro,burmanFF20}.
Often, this requires the handling of polygonal and polyhedral elements appearing
in the fluid mesh, e.g., due to the intersection between the fluid and structure
elements, and in the solid mesh, e.g., due to the presence of hanging nodes;
see, e.g., \cite{antonietti2019numerical,da2020equilibrium}.  For this kind of
approaches, it is mandatory that the underlining discretization methods can
robustly and efficiently support meshes made of arbitrarily shaped elements. In
this respect, a deep  understanding  of the stability properties of the
numerical scheme with respect to possibly pathological meshes is of crucial
importance.

The paper is organized as follows.  Section~\ref{sec:model} introduces the
transient Stokes problem and its PolyDG approximation.  In
Section~\ref{sec:wellposed}, we prove the well-posedness of the PolyDG
approximation of the (stationary) Stokes problem, with a particular emphasis to
the discrete \textit{inf-sup} condition.  In particular, in
Section~\ref{sec:computations_inf_sup} we estimate the discrete \textit{inf-sup}
constant and numerically evaluate it for different choices of the discrete
velocity and pressure spaces and for different grids.  Section
\ref{section:abstract-convergence} is devoted to the proof of a priori error
estimates of the Stokes problem.  In Section~\ref{sec:time},  we introduce a
fluid-structure interaction problem and we present its fully-discrete PolyDG
approximation. In Section~\ref{sec:examplesFSI} we show some numerical results
for the Stokes and FSI problems.  Finally, in Section~\ref{sec:conclusions}, we
draw some conclusions.

In the sequel, the notation~$\lesssim $ and~$\gtrsim$ means that the
inequalities are valid up to multiplicative constants that are independent of
the discretization parameters, but might depend on the physical parameters of
the underlying problem.






\section{The transient Stokes problem}\label{sec:model}
Having in mind the PolyDG discretization of FSI problems as a reference
application,  in this section we consider the transient Stokes problem which
reads as follows: given a final time~$T>0$ and~$\xx f$ a (regular) forcing term,
find the velocity $\xx{u}=\xx{u}(t)$ and the pressure $p=p(t)$ such that, for
all $t\in (0,T]$,
\begin{subequations}
\label{eq:stokesPb}
    \begin{align}
    & \rho \partial_t \xx{u} - \mu \Delta \xx{u} + \nabla p = \xx f
    && \text{in} \; \Omega,  \label{eq:stokesEq1} \\
    & \nabla \cdot \xx{u} = 0
    && \text{in} \; \Omega, \label{eq:stokesEq2}\\
     & \xx{u} = 0
    && \text{on} \; \partial \Omega. \label{eq:stokesbc}
    \end{align}
\end{subequations}
Problem \eqref{eq:stokesPb} is supplemented with sufficiently regular initial conditions $\xx{u}( \xx{x} , 0 ) = \xx{u}^0(\xx{x})$ in~$\Omega$.
To guarantee the well-posedness of the problem, we prescribe that $p \in L^2_0 (\Omega)$, where~$L^2_0(\Omega)$ is the space of~$L^2(\Omega)$ functions with zero average over~$\Omega$.

We introduce the functional spaces
$$
\xx V = \lbrace \xx v \in \lbrack H^1(\Omega) \rbrack^d, d = 2,3, \textrm{ such
that } \xx v|_{\partial \Omega} = 0 \rbrace
$$
and $Q = L^2_0(\Omega)$ and endow them with the norms
\[
\Vert \xx v \Vert_{\xx V} := \Vert \mu^{\frac{1}{2}} \nabla \xx v \Vert_{L^2(\Omega)} \quad \text{and} \quad \Vert q \Vert_Q := \Vert q \Vert _{L^2(\Omega)}.
\]
The weak formulation of problem~\eqref{eq:stokesPb} reads as
follows: find $( \xx{u} , p ) \in \xx V \times Q$, such that, for all~$t\in (0,T]$,
\begin{equation}
\label{eq:stokesWeak}
\begin{aligned}
 & (\rho \partial_t \xx u, \xx v)_\Omega + a ( \xx{u}, \xx{v} )
+ b ( p , \xx{v} ) - b ( q , \xx{u} ) = (\xx f , \xx v)_\Omega
&& \forall ( \xx{v}, q ) \in \xx V \times Q,
\end{aligned}
\end{equation}
where
\[
\begin{split}
a: \xx V \times \xx V \rightarrow \R, \quad a ( \xx{u}, \xx{v} ) = \displaystyle
\int_\Omega \mu \nabla \xx{u} :     \nabla \xx{v},\\
b : Q \times \xx{V} \rightarrow \R, \quad     b ( p , \xx{v} ) = - \displaystyle
\int_\Omega p \nabla \cdot \xx{v},
\end{split}
\]
and $(\cdot , \cdot)_\Omega$ denotes the $L^2$-inner product over the domain~$\Omega$.

It is well-known that the bilinear form~$b(\cdot, \cdot)$ satisfies a continuous \textit{inf-sup} condition; see, e.g., \cite{BrezziFortin}.
More precisely, there exists a universal positive constant depending only on~$\Omega$
such that, to all~$q \in L^2_0(\Omega)$, we associate a function~$\xx v_q \in
\xx V$ satisfying $\nabla \cdot \xx v _q = q$ and
\begin{equation} \label{continuous:inf-sup}
\beta \Vert \xx v_q \Vert_{\xx V} \le \Vert q \Vert_{L^2(\Omega)}.
\end{equation}

\subsection{PolyDG semi-discrete approximation of the transient Stokes problem}\label{sec:polydg}

First, we introduce the necessary notation and key analytical results
required for the definition and analysis of PolyDG semi-discrete approximation
of the transient Stokes problem.

We introduce a mesh $\mathcal{T}_h$ composed of polytopic elements~$K$ of arbitrary shape. We indicate with $h_{K}$ the diameter of the element~$K$.
We define an interface to be either the intersection of the $(d-1)$-dimensional facets of two neighboring elements or the intersection of the $(d-1)$-dimensional facets of an element with the boundary of $\Omega$.
When  $d = 2$, interfaces coincide with faces and consist of line segments; when $d=3$, we assume that each interface consists of a general planar polygon that we assume that can be further decomposed into a set of co-planar triangles, denoted as faces.

With this notation, we collect all the $(d-1)$-dimensional faces in the set $\mathcal{F}_{h}$, i.e., any face $F \in \mathcal{F}_{h}$ is always defined as a set of $(d-1)$-dimensional simplices (line segments or triangles);
cf. \cite{CangianiGeorgoulisHouston_2014,cangiani2017hp}.
We also decompose the faces $\mathcal{F}_h$ into $\mathcal{F}_{h} = \mathcal{F}_{h}^i \cup \mathcal{F}_{h}^b$, where $\mathcal{F}_{h}^i$ denotes the set of interior faces and $\mathcal{F}_{h}^b$ denotes the set of boundary faces.
To avoid technicalities, in the following we assume that $\rho$ and $\mu$ are piecewise constant over the mesh.\\

For given integers $\ell, m \ge 1$, we introduce the DG finite element spaces
\[
\left.
\begin{array}{l}
     \Vh = \{\xx v \in [L^2(\Omega)]^d:\,\xx v|_{K}\in
    [\mathcal{P}^\ell(K)]^d \; \forall K\in\mathcal{T}_h\},\\[0.2cm]
    \Qh = \{q \in L^2_0(\Omega):\, q|_{K}\in
    \mathcal{P}^m(K) \; \forall K\in\mathcal{T}_h\},\\[0.2cm]
\end{array}
\right.
\]
where $\mathcal{P}^k(K)$, $k \ge 1$, denotes the space of polynomials defined over the element $K \in \mathcal T_h$ of total degree at most~$k$.
In practice, the shape functions and the degrees of freedom are directly generated on the physical element $K \in \mathcal T_h$ with the ``bounding box'' technique;
see, e.g., \cite{CangianiGeorgoulisHouston_2014}.

On any interior face $F \in \mathcal{F}_h^i$ and for sufficiently regular scalar, vector-valued
and symmetric tensor-valued functions $q$, $\xx v$ and $\xx T$, respectively, we
define the \emph{average} and \emph{jump} operators as
\begin{align*}
    &\left\{{ \xx v }\right\} = \frac{1}{2}\left(\xx v^+ + \xx v^-\right),
    &&\jump{q} = q^+\xx n^+ + q^-\xx n^-,\\
    &\left\{{\xx T}\right\}=\frac{1}{2}\left(\xx T^+ +  \xx T^-\right),
    &&\jump{\xx v}=\xx v^+\odot\xx n^+ + \xx v^-\odot\xx n^-,
\end{align*}
where $q^\pm$, $\xx v^\pm$ and $\xx T^\pm$ denote the  traces of $q$, $\xx v$ and $\xx T$
on~$F$ taken within the interior of~$K^\pm$ and where $\xx v \odot \xx n=(\xx v
\xx n^{T}+\xx n \xx v^{T})/2$.
The jump~$\jump{\xx v}$ is a symmetric tensor-valued function.  On a boundary
face $F \in \mathcal{F}_h^b$, we set analogously
\begin{align*}
    &\left\{\xx v\right\} = \xx v,
    &&\jump{q}=q\xx n,\\
    &\left\{\xx T\right\} = \xx T,
    &&\jump{\xx v}=\xx v\odot\xx n.
\end{align*}
We also introduce the $L^2$-inner products over a domain~$Z \subset \R^d$, $d=1,2,3$, and a face $F \in \mathcal{F}_h$ with the shorthand notation $( \cdot , \cdot)_{Z}$ and $( \cdot , \cdot)_F$, respectively.\\

Given~$s >1/2$, associated with any mesh~$\mathcal T_h$, we introduce the broken Sobolev space
\[
H^s(\mathcal T_h) := \left\{ v \in L^2(\Omega) \mid v|_K \in H^s(K) \text{ for all } K\in \mathcal T_h     \right\}.
\]
The standard Dirichlet trace operator is well defined on the skeleton of the mesh for functions in~$H^s(\mathcal T_n)$.

Define the stabilization functions $\sigma_{v}\in L^{\infty}(\mathcal F_h)$ and $\sigma_{p}\in L^{\infty}(\mathcal F_h)$ as follows.
\begin{definition}
\label{def:sigmav_sigmap}
We define the functions $\sigma_v : \mathcal F_h \rightarrow \R$ and $\sigma_p :
\mathcal F_h^i \rightarrow \R$ as
\begin{align*}
& \sigma_v|_F =
\begin{cases}
\yy \gamma_v \max_{K^+,K^-} \left\{ \frac{\ell^2 \mu}{h_K}\right\} &  F \in \mathcal F_h^i,\\
\yy \gamma_v  \frac{\ell^2 \mu}{h_K} & F \in \mathcal F_h^b,
\end{cases}
&& \sigma_p|_F =
\yy \gamma_p \min_{K^+,K^-} \left\{ \frac{h_K}{m}\right\} \quad F \in \mathcal F_h^i,
\end{align*}
where $\gamma_v$ and $\gamma_p$ are two universal positive constants.
\end{definition}
Next, we introduce three bilinear forms that are instrumental for the construction of the DG method.
More precisely, we consider $ a_h: [H^1(\mathcal{T}_h)]^d\times [H^1(\mathcal{T}_h)]^d \rightarrow \R$, $b_h : H^{\frac{1}{2} + \varepsilon}(\Omega) \times [H^1(\mathcal{T}_h)]^d \rightarrow \R$,
and~$s_h : H^{\frac{1}{2} + \varepsilon}(\Omega) \times H^{\frac{1}{2} + \varepsilon}(\Omega) \rightarrow \R$, for all~$\varepsilon>0$, defined as
\begin{subequations}
\begin{align}
& a_h ( \xx{u}, \xx{v} ) = \int_\Omega \mu \nabla_h \xx{u} :
    \nabla_h \xx{v}  - \displaystyle \sum_{F \in \mathcal F_h} \int_F \mu \{ \nabla_h \xx u \} :   \jump{\xx{v}}\\
 & \quad\quad\quad\quad\quad  -  \sum_{F \in \mathcal F_h} \int_F \mu \jump{\xx{u}} : \{ \nabla_h \xx{v} \}  + \sum_{F \in \mathcal F_h} \int_F \sigma_v \jump{\xx{u}} :   \jump{\xx{v}}, \label{eq:form_a_h} \\
& b_h ( p , \xx{v} ) = - \displaystyle \int_\Omega p \nabla_h \cdot \xx{v} + \displaystyle \sum_{F \in \mathcal F_h} \int_F \{ p \xx I \} : \jump{\xx{v}},
\label{eq:form_b_h}\\
&  s_h \left( p , q \right) = \sum_{F \in \mathcal F_h^i} \int_F \sigma_p  \jump{ p } \cdot \jump{ q }, \label{eq:form_s}
\end{align}
\end{subequations}
where $\nabla_h$ is the piecewise broken gradient operator.

Given  $\xx f \in [L^2(\Omega)]^d$, the semi-discrete PolyDG approximation of~\eqref{eq:stokesWeak} reads as follows: for any $t \in \left( 0,T \right]$, find $( \xx{u}_h, p_h ) \in \Vh \times \Qh$ such that
\begin{align}
\label{eq:semiDiscForm}
\begin{split}
&\left(\rho \partial_t \xx{u}_h , \xx{v}_h \right)_{\Omega} + a_h \left( \xx{u}_h, \xx{v}_h \right) + b_h \left( p_h , \xx{v}_h \right) - b_h \left( q_h , \xx{u}_h \right) + s_h \left( p_h , q_h \right)  = (\xx f,\xx v_h)_\Omega
\end{split}
\end{align}
for all $( \xx{v}_h, q_h ) \in \Vh \times \Qh$.

\section{Well-posedness of the stationary Stokes problem}\label{sec:wellposed}

In this section, we prove the well-posedness of problem \eqref{eq:semiDiscForm}
in the stationary case making use of the Banach-Ne\v{c}as-Babu\v{s}ka theorem.

To this aim, we first introduce
\begin{equation} \label{eq:abs_bil_form}
\mathcal{B}_h((\xx u, p),(\xx v, q)) = a_h(\xx u, \xx v) + b_h(p, \xx v) -
b_h(q, \xx u) + s_h (p,q),
\qquad F ((\xx v, q)) = (\xx f , \xx v)_{\Omega},
\end{equation}
and re-write the stationary discrete Stokes problem as follows:
find $(\xx u_h , p_h) \in \Vh \times \Qh$ such that
\begin{equation}
\label{eq:abs_stokes}
\mathcal{B}_h((\xx u_h, p_h),(\xx v_h, q_h)) = F ((\xx v_h, q_h)) \qquad \forall (\xx v_h,
q_h) \in \Vh \times \Qh.
\end{equation}
On the product space $\Vh \times \Qh$, we  define the norm
\begin{equation}
\label{eq:Enorm}
\begin{aligned}
&\normE{ (\xx v_h, q_h) }^2=\normVh{ \xx v_h }^2 + \normQh{ q_h }^2
\quad\quad \forall (\xx v_h, q_h) \in \Vh \times \Qh,
\end{aligned}
\end{equation}
where
\begin{equation}
\label{eq:DGnorm}
\begin{aligned}
\normVh{ \xx v_h }^2&=
\sum_{K\in \mathcal{T}_k}\norm{ \mu^{1/2} \nabla_h \xx v_h }^2_{L^2(K)} + \norm{
\sigma_v^{1/2} \jump{ \xx v_h } }^2_{L^2(\mathcal F_h)} \quad \quad \forall {\xx v_h} \in \Vh ,\\
\normQh{ q_h }^2 &=\normQ{q_h}^2 + \seminormJ{q_h}^2, \quad \quad
\seminormJ{q_h}^2=s_h (q_h , q_h) \quad \quad \forall  q_h \in \Qh.
\end{aligned}
\end{equation}
Before presenting the theoretical analysis, we introduce some mesh assumptions and technical results that will be needed in the forthcoming analysis.

\subsection{Mesh assumptions and preliminary results}
Following \cite{CangianiGeorgoulisHouston_2014,cangiani17Spacetime,antoniettiF2020}, we introduce the notion of a family of \emph{polytopic-regular} meshes $\mathcal{T}_h$.
To this end, we write $\tau_{K_{F}} $ to denote a $d$-dimensional simplex contained in $K\in \mathcal{T}_h$, which shares a specific face~$F\subset \partial K$, $F\in \mathcal{F}_h$.
\begin{definition}\label{def_mesh_polytopic_regular}
A family of polytopic meshes $\left\{\mathcal{T}_h\right\}_h$ is said to be \emph{polytopic-regular} if, for any $h$ and $K \in \mathcal{T}_h$,
there exists a set of non-overlapping (not necessarily shape-regular) $d$-dimensional simplices $\{\tau_{K_{F}} \}_{F\subset \partial K}$ contained in~$K$, such that, for all faces $F \subset \partial K$,
\[
h_K \lesssim \frac{|\tau_{K_{F}}|}{|F|}.
\]
The hidden constant is independent of the discretization parameters, the number of faces of the element, and the face measure.
\end{definition}
This definition is very general as it does not require any restriction on either the number of faces per element or their relative measure.
In particular, it allows the size of a face $F \subset \partial K$ to be arbitrarily small compared to the diameter of the element~$h_K$ it belongs to, provided that the height of the corresponding simplex $\tau_{K_{F}} $ is comparable to $h_K$;
cf. \cite{cangiani2017hp} for more details.

In order to state suitable approximation results, cf. Lemmata~\ref{lm:localapproxTPi} and~\ref{lm:approxDG} below and~\cite{CangianiGeorgoulisHouston_2014}, we introduce a \emph{shape-regular covering} $\mathcal{T}_h^{\#} = \{ T_K\}$ of $\mathcal{T}_h$ defined as a set of
shape-regular $d$-dimensional simplices $T_K$, such that, for each $K \in \mathcal{T}_h$, there exists a $T_K \in \mathcal{T}_h^{\#}$ such that $K\subsetneq T_K$.\\

We introduce the following assumption on the mesh
$\mathcal{T}_h$; cf. \cite{CangianiGeorgoulisHouston_2014,cangiani2017hp}.
\begin{assumption}\label{ass:mesh_all}
Given $\left\{\mathcal{T}_h\right\}_h$, $h>0$, we assume that the following properties are uniformly satisfied:
\begin{enumerate}[label=\textbf{A.\arabic*}]
\item \label{ass:arb_num_faces} $\mathcal{T}_h$ is uniformly \emph{polytopic-regular} in the sense of Definition \ref{def_mesh_polytopic_regular};
\item \label{ass:shape_reg} we assume that there exists a \emph{shape-regular
covering} $\mathcal T_h^{\#}$ of $\mathcal T_h$  such that, for each pair $K \in
\mathcal T_h$, $\mathcal K \in \mathcal T_h^{\#}$ with $K \subset \mathcal K$,
the following properties are fulfilled:
\emph{i)} $h_{\mathcal K} \lesssim  h_K$ and
\emph{ii)} $\max_{K \in \mathcal T_h} \textrm{\emph{card}} \lbrace K' \in \mathcal T_h : K' \cap \mathcal K \ne \emptyset, \mathcal K
\in \mathcal T_h^{\#}, K \subset
\mathcal K \rbrace \lesssim 1$;
\item  \label{ass:local_unif} for any pair of elements $K, K' \in \mathcal{T}_h$
sharing a face $F \in \mathcal{F}_h$, we have: $h_K \lesssim h_{K'}$ and $h_{K'}
\lesssim h_K$, where the hidden constants are independent of the discretization
parameters as well as  the number of faces of the two elements.
\end{enumerate}
\end{assumption}
The local bounded variation hypothesis \ref{ass:local_unif}  has been
introduced so as to avoid technicalities.\\

The following trace-inverse inequality is valid; see, e.g.,
\cite[Lemma~11]{cangiani2017hp}.
\begin{lemma}[Polynomial trace inverse inequality] \label{lm:discInvIn1}
Let Assumption~\ref{ass:arb_num_faces} be valid. \newline For each $K \in \mathcal T_h$,  the
following trace-inverse inequality is valid:
\begin{equation*}
\begin{aligned}
&\norm{v}^2_{L^2(\partial K)} \lesssim \frac{r^2}{h_K} \norm{v}^2_{L^2(K)} && \forall v \in \mathcal P^{r} \left( K \right), &&  r\geq 1,
\end{aligned}
\end{equation*}
where the hidden constant is independent of $r$, $h_K$, and the number of faces of the element.
\end{lemma}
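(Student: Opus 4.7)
The plan is to reduce the trace inequality on the arbitrarily-shaped polytopic element $K$ to a trace inequality on the auxiliary simplices $\{\tau_{K_F}\}_{F\subset\partial K}$ supplied by Assumption~\ref{ass:arb_num_faces}, and then to exploit the fact that these simplices are non-overlapping and contained in $K$.

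First I would fix a face $F \subset \partial K$ and recall the sharp polynomial trace inequality on a $d$-dimensional simplex $\tau$ with a face $F$: for $v \in \mathcal{P}^r(\tau)$,
\[
\|v\|_{L^2(F)}^2 \;\lesssim\; r^2 \,\frac{|F|}{|\tau|}\, \|v\|_{L^2(\tau)}^2.
\]
The crucial point here is that this estimate does \emph{not} require $\tau$ to be shape-regular; only the face $F$ and the measure ratio $|F|/|\tau|$ appear, with a sharp $r^2$ dependence on the polynomial degree (this is the Warburton--Hesthaven type bound used throughout the PolyDG literature).

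Next I would apply this bound on the simplex $\tau_{K_F} \subset K$ associated with $F$. Since $v|_{\tau_{K_F}} \in \mathcal{P}^r(\tau_{K_F})$, and since Definition~\ref{def_mesh_polytopic_regular} gives $|F|/|\tau_{K_F}| \lesssim 1/h_K$, I obtain
\[
\|v\|_{L^2(F)}^2 \;\lesssim\; r^2\,\frac{|F|}{|\tau_{K_F}|}\,\|v\|_{L^2(\tau_{K_F})}^2 \;\lesssim\; \frac{r^2}{h_K}\,\|v\|_{L^2(\tau_{K_F})}^2.
\]
Summing over all faces $F \subset \partial K$ and using that the simplices $\{\tau_{K_F}\}_{F\subset\partial K}$ are pairwise non-overlapping and contained in $K$ (so that $\sum_{F\subset\partial K}\|v\|_{L^2(\tau_{K_F})}^2 \le \|v\|_{L^2(K)}^2$), I would conclude
\[
\|v\|_{L^2(\partial K)}^2 = \sum_{F\subset\partial K}\|v\|_{L^2(F)}^2 \;\lesssim\; \frac{r^2}{h_K}\sum_{F\subset\partial K}\|v\|_{L^2(\tau_{K_F})}^2 \;\lesssim\; \frac{r^2}{h_K}\,\|v\|_{L^2(K)}^2,
\]
with a hidden constant independent of $r$, $h_K$, and the number of faces of $K$, as required.

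The only potentially delicate step is ensuring that the simplex trace inequality is quoted in the degree-explicit, shape-regularity-free form (with constant depending only on $d$ and $r$, linearly in $|F|/|\tau|$); once this is in hand, the polytopic-regular hypothesis converts $|F|/|\tau_{K_F}|$ into $1/h_K$ uniformly, and the non-overlapping property of the $\tau_{K_F}$ absorbs the sum over faces without introducing a dependence on the cardinality of $\partial K$.
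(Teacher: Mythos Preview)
The paper does not supply its own proof of this lemma; it simply cites \cite[Lemma~11]{cangiani2017hp}. Your argument is correct and is precisely the proof given in that reference: the Warburton--Hesthaven simplex trace bound applied on each $\tau_{K_F}$, followed by the polytopic-regularity estimate $|F|/|\tau_{K_F}|\lesssim 1/h_K$ and the non-overlapping property to absorb the sum over faces.
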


Let $\mathcal{E}:H^s(\Omega) \rightarrow H^s(\R^d)$, $s \geq 0$, be the Stein extension operator for Sobolev spaces on Lipschitz domains introduced in~\cite[Chapter~3]{Stein70}.
The operator~$\mathcal E$ satisfies the following property: given a domain~$\Omega$ with Lipschitz boundary, for all~$q \in H^s(\Omega)$,
\begin{equation} \label{continuity:Stein}
\mathcal{E}(q)|_\Omega=q, \quad \quad \quad \quad \norm{\mathcal{E}q}_{H^s(\R^d)} \lesssim \norm{q}_{H^s(\Omega)}.
\end{equation}
For vector-valued functions, the Stein extension operator is defined component-wise. We recall the following approximation result;
see, e.g., \cite{CangianiGeorgoulisHouston_2014,cangiani17Spacetime,cangiani2017hp} for a detailed proof, which generalizes the standard arguments for standard geometries~\cite{babuskaHP87,babuvska1987optimal}.

\begin{lemma} [Best polynomial approximation in Sobolev norms] \label{lm:localapproxTPi}
~\newline
Let Assumption~\ref{ass:shape_reg} be valid. Given the Stein extension operator $\mathcal E$ in~\eqref{continuity:Stein},
let~$v \in L^2\left( \Omega \right)$ be such that $\left(\mathcal E v\right)|_{\mathcal K} \in H^r(\mathcal K)$, for some $r \ge 0$.
Then, there exists a sequence of polynomial approximations $\Pi^{\ell}_K v \in \mathcal{P}_{\ell}(K)$ of~$v$, $K \in \mathcal T_h$ and~$\ell \in \mathbb N$ of $v$ satisfying
\begin{align*}
&\norm{v - \Pi^{\ell} _K v}_{H^q(K)} \lesssim
\frac{h_K^{\min{\lbrace \ell+1 , r \rbrace}-q}}{\ell^{r-q}} \norm{\mathcal E v}_{H^r(\mathcal K)},
&& 0 \le q \le r,
\end{align*}
where $\mathcal K \in \mathcal T_h^{\#}$ is the $d$-simplex of $\mathcal
T_h^{\#}$ such that~$K \subset \mathcal K$.  Moreover,  if $v \in H^1 ( \Omega
)$ is such that $\left(\mathcal E v\right)|_{\mathcal K} \in H^r(\mathcal K)$,
for some $r \geq 1$, then we have
\begin{align*}
& \norm{v- \Pi^{\ell} _K v}_{L^2(\partial K)} \lesssim
\frac{h_K^{\min\{ \ell+1,r\}-1/2}}{\ell^{r-1/2}}\norm{\mathcal{E}v}_{H^{r}(\mathcal
K)}, && r \geq 1. 
\end{align*}
The hidden constants are independent of the discretization parameters as well as  the number of
faces of the element.
 \end{lemma}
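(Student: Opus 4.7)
The strategy is to reduce the approximation problem on the polytopic element $K$ to the classical $hp$-approximation theory on the shape-regular simplex $\mathcal{K}\in\mathcal T_h^{\#}$ that contains it, via the Stein extension operator. Since $(\mathcal E v)|_{\mathcal K}\in H^r(\mathcal K)$ by hypothesis and $\mathcal{K}$ is a shape-regular $d$-simplex of diameter $h_{\mathcal K}$, I can invoke the classical $hp$-version polynomial approximation result on simplices (of Babu\v{s}ka--Suri type, see the references cited just before the lemma, e.g.\ \cite{babuskaHP87,babuvska1987optimal}), which provides a polynomial $\widetilde{\Pi}^\ell \in \mathcal P^\ell(\mathcal K)$ such that
\[
\|\mathcal{E}v - \widetilde{\Pi}^\ell\|_{H^q(\mathcal K)}
\lesssim \frac{h_{\mathcal K}^{\min\{\ell+1,r\}-q}}{\ell^{r-q}}
\|\mathcal{E}v\|_{H^r(\mathcal K)},\qquad 0\le q\le r.
\]
I would then define $\Pi^\ell_K v := \widetilde{\Pi}^\ell|_K$, which lies in $\mathcal P^\ell(K)$ since $K\subset\mathcal K$.

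To transfer the bound from $\mathcal K$ to $K$, I use Assumption~\ref{ass:shape_reg}. First, monotonicity of the $H^q(\cdot)$-seminorm under domain inclusion gives $\|v - \Pi^\ell_K v\|_{H^q(K)} \le \|\mathcal{E}v - \widetilde{\Pi}^\ell\|_{H^q(\mathcal K)}$ (here using that $\mathcal{E}v=v$ on $\Omega\supset K$). Second, property \emph{i)} of Assumption~\ref{ass:shape_reg}, namely $h_{\mathcal K}\lesssim h_K$, allows me to replace $h_{\mathcal K}$ by $h_K$ in the estimate at the cost of a hidden constant depending only on the shape-regularity of the covering (and hence independent of $h$, $\ell$, and the number of faces of $K$). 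This delivers the first inequality of the lemma.

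For the boundary estimate I would combine the above volumetric bound with a multiplicative trace inequality on the shape-regular simplex $\mathcal{K}$ of the form $\|w\|_{L^2(\partial\mathcal K)}^2\lesssim h_{\mathcal K}^{-1}\|w\|_{L^2(\mathcal K)}^2 + h_{\mathcal K}\|w\|_{L^2(\mathcal K)}\|\nabla w\|_{L^2(\mathcal K)}$, applied to $w=\mathcal{E}v-\widetilde{\Pi}^\ell$. Inserting the already proven $L^2$- and $H^1$-bounds and using $h_{\mathcal K}\lesssim h_K$ again produces the correct power $h_K^{\min\{\ell+1,r\}-1/2}/\ell^{r-1/2}$. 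Since $\partial K$ may in general not be contained in $\partial\mathcal K$, I pass to $\partial K$ by exploiting that $K\subset\mathcal K$ and applying the trace inequality on (a shape-regular sub-domain of) $\mathcal K$ whose boundary contains $\partial K$; again shape-regularity of $\mathcal K$ makes the constants independent of the number of faces of $K$.

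The main obstacle I anticipate is bookkeeping the chain of constants so that none of them depend on the (possibly unbounded) number of faces of $K$: this is precisely the role of Assumption~\ref{ass:shape_reg} and of working through $\mathcal{K}$ rather than directly on $K$, where standard reference-element arguments would fail. The essential continuity estimate $\|\mathcal{E}v\|_{H^r(\mathcal K)}\lesssim \|v\|_{H^r(\Omega)}$ from~\eqref{continuity:Stein} is only used to ensure that the right-hand side of the bound is finite and to justify that the assumption on $\mathcal{E}v$ follows whenever $v$ itself is globally $H^r$; the bound stated in the lemma is expressed directly in terms of $\|\mathcal Ev\|_{H^r(\mathcal K)}$, so no further norm transfer is required.
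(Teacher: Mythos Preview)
The paper does not give its own proof of this lemma; it merely cites \cite{CangianiGeorgoulisHouston_2014,cangiani17Spacetime,cangiani2017hp} and remarks that the argument generalizes the Babu\v{s}ka--Suri theory on standard geometries. Your plan for the volumetric estimate is exactly the argument in those references: build the $hp$-approximant $\widetilde{\Pi}^\ell$ on the shape-regular covering simplex $\mathcal{K}$, restrict to $K$, and replace $h_{\mathcal K}$ by $h_K$ via Assumption~\ref{ass:shape_reg}\emph{(i)}. Nothing to add there.

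The trace part, however, has a genuine gap. You propose to apply a multiplicative trace inequality ``on (a shape-regular sub-domain of) $\mathcal K$ whose boundary contains $\partial K$''. No such single sub-domain exists in general: any domain $D\subset\mathcal K$ with $\partial K\subset\partial D$ is essentially $K$ itself (or $\mathcal K\setminus K$), and $K$ is precisely the polytope whose shape-regularity you cannot assume --- it may have arbitrarily many, arbitrarily small faces. A global trace inequality on $K$ with a constant independent of the number of faces is exactly what is not available a priori.

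The argument in the cited references proceeds \emph{face by face}: for each $(d-1)$-simplex $F\subset\partial K$ one constructs an auxiliary $d$-simplex $\mathcal K_F^\flat\subset\mathcal K$ having $F$ as a face, with height (relative to $F$) comparable to $h_{\mathcal K}$; the standard trace inequality on $\mathcal K_F^\flat$ then yields $\|w\|_{L^2(F)}^2\lesssim |F|\bigl(h_{\mathcal K}^{-d}\|w\|_{L^2(\mathcal K)}^2 + h_{\mathcal K}^{2-d}|w|_{H^1(\mathcal K)}^2\bigr)$ with constants depending only on the shape-regularity of $\mathcal K$. Summing over faces introduces $|\partial K|=\sum_F|F|$, and controlling this by $h_K^{d-1}$ independently of the number of faces is where the polytopic-regularity structure (or an equivalent bound on the surface measure) enters. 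Your sketch elides this step; without it, the claimed independence from the number of faces is unjustified.
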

Based on employing the above result, we define the global polynomial approximation operator $\Pi^{\ell}v $ as
\[
(\Pi^{\ell}v)|_{K}=\Pi^{\ell}_K (v|_{K}) \quad \quad \forall K \in \mathcal{T}_h.
\]
For vector-valued functions, the operators $\Pi^{\ell}_K$ and $\Pi^{\ell}$ are defined component-wise and are still denoted by $\Pi^{\ell}_K$ and $\Pi^{\ell}$, respectively.

We have the following approximation bound in the energy norm \eqref{eq:DGnorm}.
\begin{lemma} [Best polynomial approximation in the DG norm] \label{lm:approxDG}
~\newline
Let Assumption~\ref{ass:mesh_all} be valid.  Let $\xx v \in [L^2\left( \Omega \right)]^d$ be such that, for some $r \ge 1$, $\left(\mathcal E v\right)|_{\mathcal K} \in [H^r(\mathcal K)]^d$ for all $\mathcal K \in \mathcal T_h^\#$, $r\ge 1$.
Then, we have
\begin{align*}
\normVh{\xx v - \Pi^{\ell} \xx v}^2 \lesssim \sum_{K \in \mathcal{T}_h}  \frac{h_K^{2(\min{\lbrace \ell+1 , r \rbrace}-1)}}{\ell^{2(r-1)-1}} \norm{\mathcal E \xx v}_{H^r(\mathcal K)}^2.
\end{align*}
The hidden constants  are independent of the discretization parameters as well as the number of faces of each element.
\end{lemma}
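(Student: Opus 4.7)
The plan is to decompose $\normVh{\cdot}^2$ into its volume (broken gradient) contribution and its face (jump) contribution, and then estimate each using Lemma~\ref{lm:localapproxTPi} on an element-by-element basis.

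First, I would write
\[
\normVh{\xx v - \Pi^\ell \xx v}^2
= \sum_{K\in\mathcal T_h}\norm{\mu^{1/2}\nabla(\xx v - \Pi^\ell_K \xx v)}_{L^2(K)}^2
+ \sum_{F\in\mathcal F_h}\norm{\sigma_v^{1/2}\jump{\xx v - \Pi^\ell \xx v}}_{L^2(F)}^2.
\]
For the volume term, I apply the first estimate of Lemma~\ref{lm:localapproxTPi} component-wise with $q=1$, obtaining
\[
\norm{\mu^{1/2}\nabla(\xx v - \Pi^\ell_K \xx v)}_{L^2(K)}^2
\lesssim \mu\,\frac{h_K^{2(\min\{\ell+1,r\}-1)}}{\ell^{2(r-1)}} \norm{\mathcal E\xx v}_{H^r(\mathcal K)}^2,
\]
which is already consistent with (in fact, slightly better in $\ell$ than) the claimed bound.

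For the jump term, I would first bound jumps on a face by traces of the (component-wise) approximation error from each adjacent element, using the elementary inequality $|\jump{\xx w}|^2 \lesssim |\xx w^+|^2 + |\xx w^-|^2$ together with the definition of $\sigma_v$ in Definition~\ref{def:sigmav_sigmap}. On an interior face $F$ shared by $K^+$ and $K^-$, Assumption~\ref{ass:local_unif} yields $\sigma_v|_F \lesssim \ell^2\mu/h_{K^\pm}$, so summation over faces can be re-organised as a sum over elements:
\[
\sum_{F\in\mathcal F_h}\norm{\sigma_v^{1/2}\jump{\xx v-\Pi^\ell \xx v}}_{L^2(F)}^2
\lesssim \sum_{K\in\mathcal T_h} \frac{\ell^2\mu}{h_K} \norm{\xx v - \Pi^\ell_K \xx v}_{L^2(\partial K)}^2.
\]
Inserting the trace-type approximation estimate from Lemma~\ref{lm:localapproxTPi},
\[
\norm{\xx v - \Pi^\ell_K \xx v}_{L^2(\partial K)}^2
\lesssim \frac{h_K^{2\min\{\ell+1,r\}-1}}{\ell^{2r-1}}\norm{\mathcal E\xx v}_{H^r(\mathcal K)}^2,
\]
and multiplying through by $\ell^2/h_K$ produces exactly $h_K^{2(\min\{\ell+1,r\}-1)}/\ell^{2(r-1)-1}$, which is the weaker of the two $\ell$-exponents and hence dictates the final bound. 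Summation over $K$ together with the volume estimate gives the claim.

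The only delicate point is the bookkeeping on faces: I must check that, thanks to Assumption~\ref{ass:shape_reg} (in particular the bounded-covering property) and Assumption~\ref{ass:local_unif}, each element $K$ (and each covering simplex $\mathcal K \in \mathcal T_h^\#$) is charged only a uniformly bounded number of times when the face sums are redistributed to elements; this is what allows the $\norm{\mathcal E\xx v}_{H^r(\mathcal K)}^2$ terms on the right-hand side to appear without multiplicity. Everything else is a direct consequence of the two estimates in Lemma~\ref{lm:localapproxTPi} and the explicit form of $\sigma_v$.
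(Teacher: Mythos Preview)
Your proposal is correct and is precisely the argument the paper has in mind. The paper does not spell out a proof for this lemma, but the remark immediately following it attributes the $\ell$-suboptimality to the stabilization term combined with the trace estimate, and the analogous computation carried out explicitly in the proof of Corollary~\ref{cor:convergence} (bounding the term~$\circled{A}$) reproduces your steps verbatim: volume term via Lemma~\ref{lm:localapproxTPi} with $q=1$, jump term via $\sigma_v|_F \lesssim \ell^2\mu/h_K$ (using Assumption~\ref{ass:local_unif}) together with the boundary approximation estimate of Lemma~\ref{lm:localapproxTPi}, the latter producing the dominant factor $h_K^{2(\min\{\ell+1,r\}-1)}/\ell^{2(r-1)-1}$.
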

The suboptimality in terms of the polynomial degree in the estimates of Lemma~\ref{lm:approxDG} is due to the presence of the stabilization term and the suboptimality of the polynomial trace inverse estimate of Lemma~\ref{lm:discInvIn1}.

Finally, we recall the following continuity and coercivity bounds for the bilinear form $a_h(\cdot,\cdot)$.
The proof is based upon employing the trace-inverse estimate in Lemma~\ref{lm:discInvIn1} and standard arguments for DG methods; see, e.g., \cite{cangiani2017hp}.
\begin{lemma}[Coercivity and continuity of $a_h(\cdot,\cdot)$]
Let Assumption~\ref{ass:mesh_all} be valid. \newline Then, we have
\[
a_h (\xx v_h, \xx v_h) \gtrsim \normVh{ \xx v_h }^2 \qquad \forall \xx v_h \in \Vh,
\]
and
\[
\abs{ a_h(\xx u_h, \xx v_h)} \lesssim \normVh{ \xx u_h } \normVh{ \xx v_h } \qquad \forall \xx u_h , \xx v_h \in \Vh.
\]
The coercivity bounds are achieved provided that the penalty parameter $\gamma_{v}$ in Definition~\ref{def:sigmav_sigmap} of the penalty function $\sigma_v$ is chosen sufficiently large.
The hidden constants are independent of the discretization parameters, the number of faces per element, and the relative size of a face compared to the diameter of the element it belongs to.
\end{lemma}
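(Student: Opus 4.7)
The plan is the classical symmetric-interior-penalty argument, adapted to polytopic meshes through the trace-inverse estimate of Lemma~\ref{lm:discInvIn1} and the weights built into $\sigma_v$.

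For \emph{coercivity}, I would test $a_h$ with $\xx v_h$ against itself to obtain
\[
a_h(\xx v_h,\xx v_h) = \sum_{K}\norm{\mu^{1/2}\nabla_h\xx v_h}^2_{L^2(K)} + \norm{\sigma_v^{1/2}\jump{\xx v_h}}^2_{L^2(\mathcal F_h)} - 2\sum_{F\in\mathcal F_h}\int_F \mu\{\nabla_h\xx v_h\}:\jump{\xx v_h}.
\]
The mixed term is handled by a weighted Cauchy--Schwarz inequality, splitting the weight as $\mu^{1/2} = (h_K\mu/(\gamma_v\ell^2))^{1/2}\cdot(\gamma_v\ell^2\mu/h_K)^{1/2}$, so that the second factor matches $\sigma_v^{1/2}\jump{\xx v_h}$ up to a constant, while the first factor is bounded, via Lemma~\ref{lm:discInvIn1} applied face-by-face to $\nabla_h\xx v_h \in [\mathcal P^{\ell-1}(K)]^{d\times d}$, by the broken gradient norm:
\[
\sum_{F}\int_F \frac{h_K\mu}{\ell^2}|\{\nabla_h\xx v_h\}|^2 \lesssim \sum_{K}\norm{\mu^{1/2}\nabla_h\xx v_h}^2_{L^2(K)}.
\]
A Young inequality with a small parameter then absorbs the gradient contribution into the first term of $a_h(\xx v_h,\xx v_h)$, while the jump contribution is absorbed into the penalty term provided $\gamma_v$ is chosen sufficiently large (independently of $h$, $\ell$, and the number of faces per element). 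This yields $a_h(\xx v_h,\xx v_h)\gtrsim\normVh{\xx v_h}^2$.

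For \emph{continuity}, I would apply Cauchy--Schwarz term by term. The volume term and the pure penalty term are immediately bounded by $\normVh{\xx u_h}\normVh{\xx v_h}$. For each of the two consistency terms, I would again split $\mu^{1/2}$ between a $(h_K/\ell^2)^{1/2}$-weight on the gradient trace, which Lemma~\ref{lm:discInvIn1} converts into a contribution of $\normVh{\cdot}$, and a $(\ell^2/h_K)^{1/2}$-weight on the jump, matching $\sigma_v^{1/2}\jump{\cdot}$ up to a constant. Summing over faces and using discrete Cauchy--Schwarz gives the bound $|a_h(\xx u_h,\xx v_h)|\lesssim \normVh{\xx u_h}\normVh{\xx v_h}$, with hidden constants depending only on $\gamma_v$ and the polytopic-regularity constants.

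The main (and only nontrivial) obstacle is ensuring that the trace-inverse constant used to control $\{\nabla_h\xx v_h\}$ on faces is \emph{uniform} in the number of faces of $K$ and in the ratio $|F|/|K|$; this is exactly what Lemma~\ref{lm:discInvIn1} guarantees under Assumption~\ref{ass:arb_num_faces}, and it is precisely the reason why the factor $\ell^2/h_K$ (rather than the classical $\ell^2\,\mathrm{card}(\partial K)/h_K$) appears in the definition of $\sigma_v$. Everything else is bookkeeping with Cauchy--Schwarz and Young's inequality.
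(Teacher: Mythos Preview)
Your proposal is correct and matches the paper's approach: the paper does not spell out a proof but simply states that it follows from the trace-inverse estimate of Lemma~\ref{lm:discInvIn1} together with standard symmetric-interior-penalty arguments, which is precisely the route you outline. Your identification of the role of Lemma~\ref{lm:discInvIn1} in securing face-count-independent constants is also exactly the point the paper emphasizes.
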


\subsection{Generalized \textit{inf-sup} condition}
In this section, we prove a generalized \textit{inf-sup} condition for
the discrete bilinear form~$b_h(\cdot,\cdot)$ defined in~\eqref{eq:form_b_h}.
First, we need some preliminary results.
\begin{lemma}[Boundedness of $\Pi^{\ell}$ in the energy norm \eqref{eq:DGnorm}]
\label{lm:globalBoundTPil}
Let Assumption~\ref{ass:mesh_all} \newline be valid. Then, we have
\begin{equation*}
\begin{aligned}
& \normVh{ \Pi^{\ell} \xx v } \lesssim \ell^{1/2} \norm{\xx v }_{H^1(\Omega)}
&& \forall \xx v \in \xx V.
\end{aligned}
\end{equation*}
The hidden constants are independent of the discretization parameters as well as  the number of faces of the element.
\end{lemma}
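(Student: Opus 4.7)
The plan is to bound the two contributions to $\normVh{\cdot}$ separately, using the approximation estimates of Lemma~\ref{lm:localapproxTPi} combined with the fact that the original function $\xx v \in \xx V$ is continuous across interior faces and vanishes on $\partial\Omega$, so all jump terms can be rewritten as jumps of the approximation error.

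First, I would bound the volumetric part $\sum_{K}\norm{\mu^{1/2}\nabla_h \Pi^{\ell}\xx v}^2_{L^2(K)}$. By the triangle inequality, I split $\nabla \Pi^{\ell}\xx v = \nabla \xx v - \nabla(\xx v - \Pi^{\ell}\xx v)$. Applying Lemma~\ref{lm:localapproxTPi} with $r=q=1$ yields, on each element $K$, the bound $\norm{\xx v - \Pi^{\ell}_K\xx v}_{H^1(K)} \lesssim \norm{\mathcal{E}\xx v}_{H^1(\mathcal K)}$ for the associated simplex $\mathcal K \in \mathcal T_h^{\#}$. Hence $\norm{\nabla \Pi^{\ell}\xx v}_{L^2(K)} \lesssim \norm{\xx v}_{H^1(K)} + \norm{\mathcal E \xx v}_{H^1(\mathcal K)}$. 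Summing over $K$ and invoking Assumption~\ref{ass:shape_reg} together with the stability~\eqref{continuity:Stein} of $\mathcal E$ controls this contribution by $\norm{\xx v}^2_{H^1(\Omega)}$ (no $\ell$ factor appears here).

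Next, for the jump part $\norm{\sigma_v^{1/2}\jump{\Pi^{\ell}\xx v}}^2_{L^2(\mathcal F_h)}$, I would use the crucial observation that $\jump{\xx v}=0$ on every $F\in \mathcal F_h$: on interior faces this follows from $\xx v \in [H^1(\Omega)]^d$, on boundary faces from the homogeneous boundary condition $\xx v|_{\partial\Omega}=0$. Therefore $\jump{\Pi^{\ell}\xx v} = \jump{\Pi^{\ell}\xx v - \xx v}$, and each face contribution is controlled by the sum of $\norm{\xx v - \Pi^{\ell}_{K^{\pm}}\xx v}^2_{L^2(F)}$ on the two adjacent elements. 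Using the trace estimate in Lemma~\ref{lm:localapproxTPi} with $r=1$ gives
\begin{equation*}
\norm{\xx v - \Pi^{\ell}_K \xx v}^2_{L^2(\partial K)} \lesssim \frac{h_K}{\ell}\, \norm{\mathcal E \xx v}^2_{H^1(\mathcal K)}.
\end{equation*}
Since $\sigma_v|_F \lesssim \ell^2 \mu/h_K$ (with the local bounded variation hypothesis~\ref{ass:local_unif} ensuring equivalence between $h_{K^+}$ and $h_{K^-}$), multiplication produces a factor $\ell \mu$ per face. Summing over faces and using the finite-overlap property in Assumption~\ref{ass:shape_reg}\emph{ii)} yields a bound of order $\ell \norm{\xx v}^2_{H^1(\Omega)}$.

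Adding the two contributions and taking square roots gives exactly $\normVh{\Pi^{\ell}\xx v} \lesssim \ell^{1/2}\norm{\xx v}_{H^1(\Omega)}$, and the $\ell^{1/2}$ loss is clearly pinpointed to the stabilization term $\sigma_v \sim \ell^2 \mu/h_K$ combined with the trace approximation gain of only $h_K/\ell$. I do not expect any real obstacle: the main care is in ensuring that the finite-overlap of the covering $\mathcal T_h^{\#}$ together with \ref{ass:local_unif} is used so that the hidden constants truly remain independent of the number of faces and of the face-to-diameter ratio, so that the final estimate is uniform over the admissible polytopic meshes.
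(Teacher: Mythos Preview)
Your proof is correct and follows essentially the same approach as the paper: both rely on $\jump{\xx v}=\xx 0$ for $\xx v\in\xx V$ to rewrite jumps as approximation-error jumps, and both trace the $\ell^{1/2}$ loss to the stabilization weight $\sigma_v\sim \ell^2\mu/h_K$ against the trace approximation gain $h_K/\ell$. The only cosmetic difference is that the paper applies the triangle inequality directly at the level of the DG norm and invokes Lemma~\ref{lm:approxDG} with $r=1$ as a black box, whereas you unpack that lemma inline by treating the volumetric and jump contributions separately via Lemma~\ref{lm:localapproxTPi}.
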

\begin{proof}
Given $\xx v \in \xx V$, using the definition~\eqref{eq:DGnorm} of the
energy norm and the fact that~$\jump{\xx v} = \xx 0$ on~$F \in \mathcal F_h$, we immediately have
\[
\normVh{ \Pi^{\ell} \xx v }^2   \leq \normVh{ \Pi^{\ell} \xx v - \xx v  }^2 + \normVh{ \xx v  }^2 \lesssim  \normVh{ \Pi^{\ell} \xx v - \xx v  }^2 + \norm{\xx v}_{H^1(\Omega)}^2.
\]
Using Lemma~\ref{lm:approxDG} with~$r=1$, Assumption~\ref{ass:shape_reg}, and the continuity of the Stein operator~$\mathcal E$ in~\eqref{continuity:Stein}, we get
\[
 \normVh{ \Pi^{\ell} \xx v }^2   \lesssim \sum_{K \in \mathcal{T}_h}  \ell \norm{\mathcal E \xx v}_{H^1(\mathcal K)}^2 + \norm{\xx v}_{H^1(\Omega)}^2 \lesssim (1+\ell) \norm{\xx v}_{H^1(\Omega)}^2.
\]
\end{proof}

Next, we introduce the~$L^2$ projector onto the space $\Vh$:
\[
 \Pi_0^{\ell}: [L^2(\Omega)]^d \longrightarrow \Vh, \quad\quad \left( \xx w_h , \xx v - \Pi_0^{\ell} \xx v \right)_{L^2(\Omega)} = 0 \quad \forall \xx w_h \in \Vh,
\]
and state  the following result, which is based on a further assumption and
a technical result; see Assumption~\ref{ass:star} and Lemma~\ref{lm:H1L2inverse}
below, respectively.
\begin{assumption}\label{ass:star}
Given~$\left\{\mathcal{T}_h\right\}_h$, $h>0$, each element~$K \in \mathcal T_h$
admits a decomposition into shape-regular simplices having size comparable to
that of~$K$.
\end{assumption}

The following inverse estimate on shape-regular polygons can be found, e.g., in~\cite[Lemma~14]{cangiani2017hp}.
It generalizes a similar result for standard geometries; see, e.g., \cite[Theorem~4.76]{SchwabpandhpFEM}.
\begin{lemma} [$H^1-L^2$ polynomial inverse estimate] \label{lm:H1L2inverse}
Let Assumption~\ref{ass:star} be valid. \newline For each $K \in \mathcal T_h$,  the following polynomial inverse inequality is valid:
\[
\begin{aligned}
& \norm{\nabla v}^2_{L^2(K)} \lesssim \frac{\ell^4}{h_K^2} \norm{v}^2_{L^2(K)} && \forall v \in \mathcal P^{\ell} \left( K \right), &&  \ell\geq 1.
\end{aligned}
\]
The hidden constant is independent of~$\ell$, $h_K$, and the number of faces of the element.
\end{lemma}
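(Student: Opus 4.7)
The plan is to reduce to the classical inverse estimate on a shape-regular simplex by using Assumption~\ref{ass:star}. Since the decomposition gives simplices $\{\tau_i\}$ with $K = \bigcup_i \tau_i$ (interiors disjoint), diameters $h_{\tau_i} \lesssim h_K \lesssim h_{\tau_i}$, and bounded shape regularity, the polynomial $v \in \mathcal{P}^\ell(K)$ restricts on each~$\tau_i$ to a polynomial of degree at most~$\ell$.

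First I would invoke the classical $H^1$--$L^2$ polynomial inverse estimate on a shape-regular reference simplex: for $\widehat{v}\in\mathcal{P}^\ell(\widehat\tau)$,
\[
\|\nabla \widehat{v}\|_{L^2(\widehat\tau)}^2 \lesssim \ell^4 \,\|\widehat{v}\|_{L^2(\widehat\tau)}^2,
\]
which is~\cite[Theorem~4.76]{SchwabpandhpFEM}. Then, by Piola-type scaling to $\tau_i$ (affine map, Jacobians behaving like powers of $h_{\tau_i}$ thanks to the shape-regularity constants), one transfers the estimate to
\[
\|\nabla v\|_{L^2(\tau_i)}^2 \lesssim \frac{\ell^4}{h_{\tau_i}^2}\,\|v\|_{L^2(\tau_i)}^2.
\]

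Next, I would sum this bound over $i$, using $h_{\tau_i}\simeq h_K$ uniformly in the elements (by Assumption~\ref{ass:star}) and the fact that the $\tau_i$ cover $K$ with disjoint interiors:
\[
\|\nabla v\|_{L^2(K)}^2 = \sum_i \|\nabla v\|_{L^2(\tau_i)}^2 \lesssim \sum_i \frac{\ell^4}{h_{\tau_i}^2}\,\|v\|_{L^2(\tau_i)}^2 \lesssim \frac{\ell^4}{h_K^2}\,\|v\|_{L^2(K)}^2,
\]
which is the desired inequality.

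The only delicate point is ensuring that the constants do not depend on the number of simplices in the subdivision of~$K$ (equivalently, on the number of faces of~$K$). This is exactly what Assumption~\ref{ass:star} encodes: the \emph{shape-regularity constants} of the $\tau_i$ and the ratio $h_{\tau_i}/h_K$ are bounded uniformly, so the per-simplex constants absorb into a single hidden constant, and summation only contributes a factor bounded in terms of these same constants. Hence the final bound is independent of~$\ell$, $h_K$, and the number of faces, as claimed.
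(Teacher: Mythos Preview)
Your argument is correct and is exactly the standard route: decompose~$K$ into the shape-regular simplices provided by Assumption~\ref{ass:star}, apply the classical $hp$ inverse estimate \cite[Theorem~4.76]{SchwabpandhpFEM} on each, and sum (the summation is exact, so no dependence on the number of sub-simplices appears). The paper does not supply its own proof of this lemma; it simply cites \cite[Lemma~14]{cangiani2017hp} and \cite[Theorem~4.76]{SchwabpandhpFEM}, whose underlying argument is the one you wrote.
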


Based on  employing the above result, we prove the following bound.
\begin{lemma} [Stability properties of orthogonal projections]
\label{lm:globalBoundP0}
Let Assumptions~\ref{ass:mesh_all} and~\ref{ass:star} be valid. Then, we have
\begin{equation*}
\begin{aligned}
& \normVh{\Pi_0^{\ell} (\xx v - \Pi^{\ell} \xx v)} \lesssim \ell \normV{\xx v } && \forall \xx v \in \xx V.
\end{aligned}
\end{equation*}
The hidden constant is independent of the discretization parameters as well as the number of faces of the element.
\end{lemma}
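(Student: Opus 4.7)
The plan is to bound the two pieces of the DG norm for $\xx w := \Pi_0^{\ell}(\xx v - \Pi^{\ell}\xx v)$ separately by reducing each to an $L^2$ bound via inverse estimates, then exploit $L^2$-stability of the element-wise $L^2$ projector together with the $L^2$ approximation estimate of Lemma \ref{lm:localapproxTPi}.

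First I would unpack the norm:
\[
\normVh{\xx w}^2 = \sum_{K\in\mathcal T_h}\norm{\mu^{1/2}\nabla \xx w}^2_{L^2(K)} + \norm{\sigma_v^{1/2}\jump{\xx w}}^2_{L^2(\mathcal F_h)}.
\]
For the volume contribution, since $\xx w|_K \in [\mathcal P^\ell(K)]^d$, Lemma \ref{lm:H1L2inverse} yields $\norm{\nabla \xx w}^2_{L^2(K)} \lesssim (\ell^4/h_K^2)\norm{\xx w}^2_{L^2(K)}$. For the jump contribution, I bound $|\jump{\xx w}|^2 \lesssim |\xx w^+|^2 + |\xx w^-|^2$, use $\sigma_v|_F \lesssim \ell^2 \mu/h_K$ from Definition \ref{def:sigmav_sigmap}, apply the polynomial trace-inverse inequality of Lemma \ref{lm:discInvIn1} on each element, and invoke Assumption \ref{ass:local_unif} so that $h_K \sim h_{K'}$ between neighbors. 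This gives
\[
\normVh{\xx w}^2 \lesssim \sum_{K\in\mathcal T_h} \frac{\ell^4 \mu_K}{h_K^2}\norm{\xx w}^2_{L^2(K)}.
\]

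Next I use that $\Vh$ is a broken polynomial space, so $\Pi_0^\ell$ acts locally on each element and is therefore $L^2$-stable element-wise: $\norm{\xx w}_{L^2(K)} \le \norm{\xx v - \Pi^\ell \xx v}_{L^2(K)}$. Then Lemma \ref{lm:localapproxTPi} with $q=0$, $r=1$ yields $\norm{\xx v - \Pi^\ell \xx v}_{L^2(K)} \lesssim (h_K/\ell)\,\norm{\mathcal E \xx v}_{H^1(\mathcal K)}$. Substituting,
\[
\normVh{\xx w}^2 \lesssim \sum_{K\in\mathcal T_h} \ell^2 \mu_K \norm{\mathcal E \xx v}^2_{H^1(\mathcal K)} \lesssim \ell^2 \norm{\mathcal E \xx v}^2_{H^1(\R^d)} \lesssim \ell^2 \norm{\xx v}^2_{H^1(\Omega)} \lesssim \ell^2 \normV{\xx v}^2,
\]
where in the second step I use the bounded-overlap property of the covering from Assumption \ref{ass:shape_reg}, then continuity \eqref{continuity:Stein} of $\mathcal E$, and finally Poincar\'e's inequality on $H^1_0$ together with the piecewise-constant, uniformly positive character of $\mu$. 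Taking square roots gives the claim.

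The main obstacle is simply the bookkeeping of powers of $\ell$: the $H^1$--$L^2$ inverse contributes $\ell^4$, the trace-inverse contributes $\ell^2$, while the $L^2$ approximation bound on $\xx v - \Pi^\ell \xx v$ only compensates with $\ell^{-2}$. The net $\ell^2$ under the square root yields the $\ell$ loss in the statement, which matches (and explains) the $p$-suboptimality already observed in Lemma \ref{lm:approxDG}. Note that nothing in the argument requires shape-regularity of $\mathcal T_h$ beyond what is encoded in Assumptions \ref{ass:mesh_all} and \ref{ass:star}: the latter is used exclusively to legitimize the $H^1$--$L^2$ inverse estimate on general polytopes.
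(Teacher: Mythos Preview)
Your proposal is correct and follows essentially the same approach as the paper's proof: both reduce the two pieces of the DG norm to elementwise $L^2$ norms via the inverse estimates of Lemmata~\ref{lm:discInvIn1} and~\ref{lm:H1L2inverse}, then apply $L^2$-stability of $\Pi_0^\ell$ together with the $q=0$, $r=1$ case of Lemma~\ref{lm:localapproxTPi} and the bounded-overlap/Stein continuity to conclude. The only difference is organizational---the paper treats the volume and jump terms in two separate chains rather than first merging them into a single $L^2$ bound---and your explicit mention of Poincar\'e to pass from $\norm{\xx v}_{H^1(\Omega)}$ to $\normV{\xx v}$ is in fact slightly more careful than the paper's presentation.
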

\begin{proof}
From the definition of the energy norm~\eqref{eq:DGnorm}, the inverse estimate
in Lemma~\ref{lm:H1L2inverse}, the stability of the projector~$\Pi_0^\ell$ in
the~$L^2$ norm, the polynomial approximation properties of
Lemma~\ref{lm:localapproxTPi}, and the continuity of the Stein
operator~$\mathcal E$ in~\eqref{continuity:Stein}, we have
\begin{align*}
 \sum_{K \in \mathcal{T}_h}\seminorm{\Pi_0^{\ell}  (\xx v - \Pi^{\ell} \xx
 v)}_{H^1(K)}^2
 &\lesssim \sum_{K \in \mathcal{T}_h} \frac{\ell^4}{h_K^2} \norm{\Pi_0^\ell (\xx
 v - \Pi ^\ell \xx v)}^2_{L^2(K)} \\
 & \le \sum_{K \in \mathcal{T}_h} \frac{\ell^4}{h_K^2} \norm {\xx v - \Pi^\ell
 \xx v}^2_{L^2(K)} \lesssim \ell^2 \norm{\xx v}_{\xx V}^2 .
\end{align*}
Next, using the definition of~$\sigma_v$ in Definition~\ref{def:sigmav_sigmap},
the discrete trace-inverse inequality in Lemma~\ref{lm:discInvIn1}, the
continuity of the $L^2$-projector, the approximation results in
Lemma~\ref{lm:localapproxTPi}, and the continuity of the Stein
operator~$\mathcal E$ in~\eqref{continuity:Stein}, and
Assumption~\ref{ass:shape_reg}, we have
\begin{align*}
& \norm{ \sigma_v^{1/2}\jump{\Pi_0^{\ell} (\xx v - \Pi^{\ell} \xx
v)}}^2_{L^2(\mathcal F_h)} \lesssim \sum_{K \in \mathcal{T}_{h}}
\frac{\ell^2}{h_K}\norm{\Pi_0^{\ell} (\xx v - \Pi^{\ell} \xx v)}^2_{L^2(\partial
K)}\\
& \quad  \lesssim \sum_{K \in \mathcal{T}_{h}}
\frac{\ell^4}{h_K^2}\norm{\Pi_0^{\ell} (\xx v - \Pi^{\ell} \xx v)}^2_{L^2(K)}
\lesssim \sum_{K \in \mathcal{T}_{h}} \frac{\ell^4}{h_K^2}\norm{\xx v -
\Pi^{\ell} \xx v}^2_{L^2(K)}\\
& \quad \lesssim \sum_{K \in \mathcal{T}_{h}}
\frac{\ell^4}{h_K^2}\frac{h_K^2}{\ell^2}\norm{\mathcal E \xx v}_{H^1(\mathcal
K)}^2 \lesssim\ell^2\norm{\xx v}_{\xx V}^2.
\end{align*}
The assertion follows summing up the two bounds.
\end{proof}
\begin{remark}
Assumption~\ref{ass:star} is required in the proof of the polynomial inverse estimate of Lemma~\ref{lm:H1L2inverse}.
On the other hand, the suboptimality in terms of the polynomial degree in the stability properties detailed in Lemma~\ref{lm:globalBoundP0} is now due to both the inverse estimates of Lemmata~\ref{lm:discInvIn1} and~\ref{lm:H1L2inverse}.
This propagates further in the proof of the discrete \textit{inf-sup} condition, see Proposition~\ref{prop:gen_inf_sup} below, and consequently to the abstract and convergence analysis detailed in Section~\ref{section:abstract-convergence} below.
\end{remark}

\begin{remark}
Following the recent approach of~\cite{CaDoGe2019}, it is possible to prove the inverse estimates in Lemmata~\ref{lm:discInvIn1} and~\ref{lm:H1L2inverse}
using assumptions milder than Assumptions~\ref{ass:mesh_all} and~\ref{ass:star}.
Notably, the theory therein presented covers very general geometries, including $\mathcal C^1$-curved faces and possibly the presence of arbitrary number of faces.
\end{remark}

Next, we show that a generalized \textit{inf-sup} condition is valid, provided that the polynomial degrees~$\ell$ and~$m$ of the discrete velocity and pressure spaces satisfy $m-\ell \le 1$.
This condition guarantees in fact that~$\nabla \Qh \subseteq \Vh$.
\begin{proposition}[Generalized \textit{inf-sup} condition for~$b_h(\cdot,\cdot)$]
\label{prop:gen_inf_sup}
~\newline
Let Assumptions~\ref{ass:mesh_all} and~\ref{ass:star} be valid and assume that the polynomial degrees~$\ell$ and~$m$ of the discrete velocity and pressure spaces satisfy $m-\ell \le 1$.
Then, the following bound is valid:
\begin{equation*}
\begin{aligned}
& \sup_{ \xx 0\neq \xx v_h \in \Vh} \frac{b_h(q_h , \xx v_h) }{\normVh{\xx v_h}} + \seminormJ{q_h} \ge
\beta_{h} \normQ{q_h}
&& \forall q_h \in \Qh,
\end{aligned}
\end{equation*}
where the discrete \textit{inf-sup} constant behaves as
\begin{equation} \label{inf-sup:constant}
\beta_{h} = O\left(\frac{\beta}{\max\left\{ \ell^{1/2}(1+\ell^{1/2}), m^{1/2}+1\right\}}\right).
\end{equation}
\end{proposition}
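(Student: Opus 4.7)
The strategy is a Fortin-type argument: lift $q_h$ to the continuum via the continuous inf-sup condition \eqref{continuous:inf-sup} and then project back to $\Vh$, using the $L^2$-projector so as to exploit the condition $m-\ell\le 1$. Concretely, given $q_h\in\Qh$, pick $\xx v_{q_h}\in\xx V$ with $\nabla\cdot\xx v_{q_h}=q_h$ and $\beta\,\normV{\xx v_{q_h}}\le\normQ{q_h}$, and use as a candidate the function $-\Pi_0^{\ell}\xx v_{q_h}\in\Vh$.

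\paragraph{Step 1: Split and identify the leading term.}
Write
\[
b_h(q_h,\Pi_0^{\ell}\xx v_{q_h})
=b_h(q_h,\xx v_{q_h})+b_h(q_h,\Pi_0^{\ell}\xx v_{q_h}-\xx v_{q_h}).
\]
Since $\xx v_{q_h}\in\xx V$, its jumps vanish across $\mathcal F_h$, so the consistency term in \eqref{eq:form_b_h} drops and $b_h(q_h,\xx v_{q_h})=-\normQ{q_h}^{2}$.

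\paragraph{Step 2: Elementwise integration by parts and orthogonality.}
Integration by parts element by element and rearranging face contributions gives
\[
b_h(q_h,\xx w)=\sum_{K}\int_{K}\nabla q_h\cdot\xx w-\sum_{F\in\mathcal F_h^i}\int_F\jump{q_h}\cdot\{\xx w\}
\]
for $\xx w\in[H^1(\mathcal T_h)]^d$. Apply this to $\xx w=\Pi_0^{\ell}\xx v_{q_h}-\xx v_{q_h}$. The hypothesis $m-\ell\le 1$ yields $\nabla q_h|_K\in[\mathcal P^{\ell}(K)]^d$, so the volume term vanishes by the $L^{2}$-orthogonality of $\Pi_0^{\ell}$, leaving only the face contribution.

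\paragraph{Step 3: Bound the face term by $\seminormJ{q_h}$.}
Use Cauchy--Schwarz weighted by $\sigma_p$:
\[
\Bigl|\sum_F\int_F\jump{q_h}\cdot\{\Pi_0^{\ell}\xx v_{q_h}-\xx v_{q_h}\}\Bigr|
\le \seminormJ{q_h}\Bigl(\sum_F\sigma_p^{-1}\,\norm{\{\Pi_0^{\ell}\xx v_{q_h}-\xx v_{q_h}\}}_{L^2(F)}^{2}\Bigr)^{1/2}.
\]
On each face I would split $\Pi_0^{\ell}\xx v_{q_h}-\xx v_{q_h}=\Pi_0^{\ell}(\Pi^{\ell}\xx v_{q_h}-\xx v_{q_h})-(\Pi^{\ell}\xx v_{q_h}-\xx v_{q_h})$; applying the polynomial trace-inverse Lemma~\ref{lm:discInvIn1} and the $L^{2}$-stability of $\Pi_0^{\ell}$ to the first piece and Lemma~\ref{lm:localapproxTPi} (with $q=0$, $r=1$) to the second, both produce $L^2(\partial K)$-contributions of order $h_K^{1/2}\norm{\mathcal E\xx v_{q_h}}_{H^1(\mathcal K)}$. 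Since $\sigma_p^{-1}\sim m/h_K$, summing and invoking the bounded-overlap property \ref{ass:shape_reg} yields
\[
\sum_F\sigma_p^{-1}\,\norm{\{\Pi_0^{\ell}\xx v_{q_h}-\xx v_{q_h}\}}_{L^2(F)}^{2}
\lesssim m\,\norm{\xx v_{q_h}}_{H^1(\Omega)}^{2}\lesssim m\,\normV{\xx v_{q_h}}^{2}\lesssim \frac{m}{\beta^{2}}\,\normQ{q_h}^{2}.
\]

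\paragraph{Step 4: Bound $\normVh{\Pi_0^{\ell}\xx v_{q_h}}$.}
Using the triangle inequality through $\Pi^{\ell}\xx v_{q_h}$ and the identity $\Pi_0^{\ell}\xx v_{q_h}-\Pi^{\ell}\xx v_{q_h}=\Pi_0^{\ell}(\xx v_{q_h}-\Pi^{\ell}\xx v_{q_h})$, Lemmata~\ref{lm:globalBoundTPil} and~\ref{lm:globalBoundP0} give
\[
\normVh{\Pi_0^{\ell}\xx v_{q_h}}\lesssim \ell^{1/2}\norm{\xx v_{q_h}}_{H^{1}(\Omega)}+\ell\,\normV{\xx v_{q_h}}\lesssim \ell^{1/2}(1+\ell^{1/2})\,\frac{\normQ{q_h}}{\beta}.
\]

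\paragraph{Step 5: Assemble the inf-sup bound.}
Combining Steps 1--4 with the test $-\Pi_0^{\ell}\xx v_{q_h}$ yields
\[
\sup_{\xx 0\neq\xx v_h\in\Vh}\frac{b_h(q_h,\xx v_h)}{\normVh{\xx v_h}}
\;\ge\; A\,\normQ{q_h}-B\,\seminormJ{q_h},\qquad A\sim\frac{\beta}{\ell^{1/2}(1+\ell^{1/2})},\quad B\sim\frac{m^{1/2}}{\ell^{1/2}(1+\ell^{1/2})}.
\]
To conclude $\sup+\seminormJ{q_h}\ge\beta_h\normQ{q_h}$ with $\beta_h$ of the asserted form, I would split into the cases $\seminormJ{q_h}\le A\normQ{q_h}/(2B)$ and its complement: in the first case $\sup\ge A\normQ{q_h}/2$; in the second $\seminormJ{q_h}\ge (A/(2B))\normQ{q_h}$. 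The resulting constant is $\min(A,A/B)=\beta/\max(\ell^{1/2}(1+\ell^{1/2}),m^{1/2})$, which is equivalent up to an absolute factor to $\beta/\max(\ell^{1/2}(1+\ell^{1/2}),m^{1/2}+1)$.

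\paragraph{Main obstacle.}
The delicate point is Step 4: one must bound $\Pi_0^{\ell}$ in the DG norm even though only the $L^{2}$-projector enforces the orthogonality needed in Step 2, and this is where the polynomial-degree suboptimality factor $\ell^{1/2}(1+\ell^{1/2})$ enters. The remainder is essentially bookkeeping around Cauchy--Schwarz with the penalty weight $\sigma_p$ and the constraint $m\le \ell+1$.
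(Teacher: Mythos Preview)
Your argument is correct and uses the same toolkit as the paper (the continuous inf-sup, Lemmata~\ref{lm:discInvIn1}, \ref{lm:localapproxTPi}, \ref{lm:globalBoundTPil}, \ref{lm:globalBoundP0}, and the orthogonality coming from $\nabla_h q_h\in\Vh$), but the organization differs. The paper never commits to a single test function: it expands $\normQ{q_h}^2=\int_\Omega q_h\,\nabla\cdot\xx v_{q_h}$, inserts $\Pi^{\ell}\xx v_{q_h}$, and splits into three pieces $\circled{A}+\circled{B}+\circled{C}$, with $\circled{B}$ further split via $\Pi_0^{\ell}$ into $\circled{I}+\circled{II}$. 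The terms $\circled{A}$ and $\circled{I}$ are each bounded by a multiple of $\sup_{\xx v_h}b_h(q_h,\xx v_h)/\normVh{\xx v_h}$ \emph{abstractly}, while $\circled{C}$ and $\circled{II}$ are bounded by multiples of $\seminormJ{q_h}$; this yields $\normQ{q_h}\lesssim\beta_h^{-1}(\sup+\seminormJ{q_h})$ directly, with no case distinction. You instead take the explicit Fortin candidate $-\Pi_0^{\ell}\xx v_{q_h}$, kill the volume residual in one stroke by orthogonality, and are then forced into a case split because the inequality $\sup\ge A\normQ{q_h}-B\seminormJ{q_h}$ is only informative when its right-hand side is non-negative (dividing a possibly negative numerator by an \emph{upper} bound on the denominator does not give a lower bound). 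The case split restores the argument and yields the same constant. Your route is slightly more constructive (one explicit Fortin operator); the paper's avoids the dichotomy at the price of invoking the supremum twice. One cosmetic slip: the identity in your Step~3 has the sign reversed (your right-hand side equals $\xx v_{q_h}-\Pi_0^{\ell}\xx v_{q_h}$), which of course does not affect the magnitude bound.
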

\begin{proof}
Upon employing element-wise integration by parts, the bilinear form $b_h (\cdot, \cdot)$ defined as
$$
b_h (q_h , \xx{v} _h) = - \displaystyle \int_\Omega q_h \nabla_h \cdot \xx{v}_h + \displaystyle \sum_{F \in \mathcal F_h} \int_F \{ q_h \xx I \} : \jump{\xx{v}_h} \qquad \forall q_h \in  \Qh, \xx{v} _h \in \Vh,
$$
can be equivalently rewritten as
$$
b_h (q_h , \xx{v} _h) =  \displaystyle \int_\Omega \nabla_h q_h  \cdot \xx{v}_h
- \displaystyle \sum_{F \in \mathcal F_h^I} \int_F \jump{q_h} \cdot \{\xx{v}_h\}
\qquad \forall q_h \in \Qh, \xx{v} _h \in \Vh.
$$
Recall the continuous \textit{inf-sup} condition~\eqref{continuous:inf-sup}: there exists~$\beta>0$ such that, to each~$q_h \in \Qh \subset L^2_0(\Omega)$, we associate~$\xx v_{q_h} \in \xx V$ with
\begin{equation} \label{continuous:inf-sup2}
\nabla \cdot \xx v_{q_h} = q_h, \quad \quad \beta \norm{ \xx v_{q_h}}_{\xx V} \le \norm{q_h}_{L^2\left( \Omega \right)}.
\end{equation}
Then, applying element-wise integration by parts, using that $\jump{\xx
v_{q_h}} =\xx 0$ for any~$F \in \mathcal F_h$,  and observing that~$\nabla_h q_h
\in \Vh$ if~$\ell \ge m-1$, we obtain
\begin{equation} \label{bound:ABC}
\begin{split}
& \norm{ q_h }_{L^2\left( \Omega \right)}^2 = \int_{\Omega} q_h \nabla \cdot \xx  v_{q_h} = - \int_{\Omega} \nabla_h q_h \cdot \xx v_{q_h} + \displaystyle  \sum_{F \in \mathcal F_h} \int_F \jump{q_h} : \{ \xx v_{q_h} \}\\
&\quad= - \int_{\Omega} \nabla_h q_h \cdot \Pi^{\ell} \xx v_{q_h} +   \int_{\Omega} \nabla_h q_h \cdot (\Pi^{\ell} \xx v_{q_h} - \xx v_{q_h})  + \displaystyle  \sum_{F \in \mathcal F_h} \int_F \jump{q_h} \cdot \{ \xx v_{q_h} \}\\
&\quad= \int_{\Omega} q_h \nabla \cdot \Pi^{\ell} \xx v_{q_h} + \int_{\Omega} \nabla_h q_h \cdot (\Pi^{\ell} \xx v_{q_h} - \xx v_{q_h})\\
&\quad \quad \quad - \sum_{F \in \mathcal F_h} \int_F \{q_h \xx I\} : \jump{ \Pi^{\ell} \xx v_{q_h} } + \displaystyle \sum_{F \in \mathcal F_h} \int_F \jump{q_h} \cdot \{ \xx v_{q_h} - \Pi^{\ell} \xx v_{q_h} \}\\
&\quad=  \underbrace{-b_h\left( q_h , \Pi^{\ell} \xx v_{q_h} \right)}_{\circled{A}}  \underbrace{+   \int_{\Omega} \nabla_h q_h \cdot (\Pi^{\ell} \xx v_{q_h} - \xx v_{q_h})}_{\circled{B}} +  \underbrace{\sum_{F\in \mathcal F_h} \int_F \jump{q_h} \cdot \{ \xx  v_{q_h} - \Pi^{\ell} \xx v_{q_h}\}}_{\circled{C}}.
\end{split}
\end{equation}
We bound the three terms on the right-hand side of~\eqref{bound:ABC} separately.
As for the term $\circled{A}$, thanks to the boundedness of $\Pi^{\ell}$ in the energy norm, see Lemma~\ref{lm:globalBoundTPil}, and the continuous \textit{inf-sup} condition~\eqref{continuous:inf-sup2}, we get
\begin{equation} \label{bound:A}
\begin{split}
\circled{A} = - b_h
& \left( q_h , \Pi^{\ell}\xx v_{q_h} \right) \le \frac{\abs{ b_h\left( q_h ,
\Pi^{\ell} \xx v_{q_h} \right) }}{\normVh{ \Pi^{\ell} \xx  v_{q_h}}} \normVh{
\Pi^{\ell} \xx v_{q_h} }\\
&    \lesssim \ell^{1/2} \frac{\abs{ b_h\left( q_h , \Pi^{\ell} \xx v_{q_h} \right) }}{ \normVh{ \Pi^{\ell}\xx  v_{q_h} } } \norm{\xx v_{q_h} }_{H^1\left( \Omega \right)}
\le \frac{\ell^{1/2}}{\beta} \frac{\abs{ b_h\left( q_h , \Pi^{\ell}\xx v_{q_h}
\right) }}{ \normVh{ \Pi^{\ell}\xx v_{q_h} }} \norm{  q_h }_{L^2\left( \Omega
\right)} \\
&    \le \frac{\ell^{1/2}}{\beta} \norm{ q_h }_{L^2\left( \Omega \right)} \sup_{\xx v_h \in \Vh \setminus {\{\xx 0\}}} \frac{ b_h\left( q_h , \xx v_h \right) }{\normVh{\xx v_h }}.
\end{split}
\end{equation}
As for the term $\circled{C}$, using the Cauchy-Schwarz inequality, the definition of the penalty function $\sigma_p$ in Definition~\ref{def:sigmav_sigmap}, Assumption \ref{ass:local_unif},
the second bound in Lemma~\ref{lm:localapproxTPi} with $r=1$, and~$m-\ell\le 1$, we obtain
\[
\begin{split}
\circled{C}
&\lesssim \seminormJ{q_h}\left( \sum_{K \in \mathcal T_h} \frac{m}{h_K} \norm{ \xx v_{q_h} - \Pi^{\ell} \xx v_{q_h} }_{L^2(\partial K)}^2 \right)^{1/2}\\
&\lesssim \seminormJ{q_h} \left( \sum_{K \in \mathcal T_h} \frac{m}{h_K} \frac{h_K}{\ell} \norm{ \mathcal E \xx v_{q_h} }_{H^1(\mathcal K)}^2 \right)^{1/2} \lesssim \seminormJ{q_h} \left( \sum_{K \in \mathcal T_h} \norm{ \mathcal E \xx v_{q_h} }_{H^1(\mathcal K)}^2 \right)^{1/2}.
\end{split}
\]
Finally, using the continuity of the Stein extension operator $\mathcal E$ in~\eqref{continuity:Stein}, Assumption~\ref{ass:local_unif}, and the continuous \textit{inf-sup} condition~\eqref{continuous:inf-sup2}, we get
\begin{equation} \label{bound:C}
\circled{C} \lesssim  \seminormJ{q_h}\norm{ \xx v_{q_h} }_{H^1(\Omega)} \le \frac{1}{\beta} \seminormJ{q_h}\norm{ q_h}_{L^2(\Omega)}.
\end{equation}
As for the term $\circled{B}$, using the definition of $L^2$ projector, the fact that~$\nabla_h q_h \in \Vh$ ($m-\ell \leq1$), and an integration by parts, we write
\begin{equation} \label{initial-bound:B}
\begin{split}
\circled{B}
&=\int_{\Omega} \nabla_h q_h \cdot (\Pi^{\ell} \xx v_{q_h} - \Pi_0^{\ell}\xx v_{q_h} +\Pi_0^{\ell}\xx v_{q_h} - \xx v_{q_h}) = \int_{\Omega} \nabla_h q_h \cdot (\Pi^{\ell} \xx v_{q_h} - \Pi_0^{\ell}\xx v_{q_h})\\
&= \int_{\Omega} \nabla_h q_h \cdot (\Pi_0^{\ell} \Pi^{\ell} \xx v_{q_h} - \Pi_0^{\ell}\xx v_{q_h}) = \int_{\Omega} \nabla_h q_h \cdot \Pi_0^{\ell}  (\Pi^{\ell} \xx v_{q_h} - \xx v_{q_h})\\
&= \underbrace{b_h(q_h,\Pi_0^{\ell}  (\Pi^{\ell} \xx v_{q_h} - \xx v_{q_h}))}_{\circled{I}}+ \underbrace{\sum_{F \in \mathcal F_h^i} \int_F \jump{q_h} \cdot \{\Pi_0^{\ell}  (\Pi^{\ell} \xx v_{q_h} - \xx v_{q_h}))\}}_{\circled{II}}.
\end{split}
\end{equation}
We bound the two terms on the right-hand side separately. As for the term~$\circled{I}$, we proceed as above,
namely we use the continuity of the Stein extension operator $\mathcal E$ in~\eqref{continuity:Stein}, Assumption \ref{ass:shape_reg}, and the continuous \textit{inf-sup} condition~\eqref{continuous:inf-sup2}:
\begin{equation} \label{bound:I}
\begin{split}
\circled{I}
&\le \frac{\abs{ b_h\left( q_h , \Pi_0^{\ell}  (\Pi^{\ell} \xx v_{q_h} - \xx v_{q_h}) \right) }}{ \normVh{ \Pi_0^{\ell}  (\Pi^{\ell} \xx v_{q_h} - \xx v_{q_h})} } \normVh{ \Pi_0^{\ell}  (\Pi^{\ell} \xx v_{q_h} - \xx v_{q_h}) }\\
& \le \left(\sup_{\xx v_h \in \Vh \setminus {\{\xx 0\}}} \frac{ b_h\left( q_h , \xx v_h \right) }{\normVh{\xx v_h }} \right)\normVh{ \Pi_0^{\ell} (\Pi^{\ell} \xx v_{q_h} - \xx v_{q_h}) }\\
&\lesssim \ell \left(\sup_{\xx v_h \in \Vh \setminus {\{\xx 0\}}} \frac{ b_h\left( q_h , \xx v_h \right) }{\normVh{\xx v_h }} \right)\normV{\xx v_{q_h}} \le \frac{\ell}{\beta} \left(\sup_{\xx v_h \in \Vh \setminus {\{\xx 0\}}} \frac{ b_h\left( q_h , \xx v_h \right) }{\normVh{\xx v_h }} \right)\norm{q_h}_{L^2(\Omega)}.
\end{split}
\end{equation}
As for the term $\circled{II}$, we make use of the trace-inverse inequality of Lemma~\ref{lm:discInvIn1}, the stability in $L^2$ of the projector~$\Pi_0^{\ell}$, the interpolation bounds of Lemma~\ref{lm:localapproxTPi},
the continuity of the Stein extension operator $\mathcal E$ in~\eqref{continuity:Stein} together with Assumption \ref{ass:shape_reg}, and the continuous \textit{inf-sup} condition~\eqref{continuous:inf-sup2} to obtain
\begin{equation} \label{bound:II}
\begin{split}
\circled{II}
&\lesssim \seminormJ{q_h}\left( \sum_{K \in \mathcal T_h} \frac{m}{h_K} \norm{\Pi_0^{\ell}  (\Pi^{\ell} \xx v_{q_h} - \xx v_{q_h}) }_{L^2(\partial K)}^2 \right)^{1/2}\\
&\lesssim \seminormJ{q_h}\left( \sum_{K \in \mathcal T_h} \frac{m}{h_K} \frac{\ell^2}{h_K} \norm{ \Pi_0^{\ell}  (\Pi^{\ell} \xx v_{q_h} - \xx v_{q_h})}_{L^2( K)}^2 \right)^{1/2}\\
&\lesssim \seminormJ{q_h}\left( \sum_{K \in \mathcal T_h} \frac{m}{h_K} \frac{\ell^2}{h_K}\frac{h_K^2}{\ell^2} \norm{ \mathcal E \xx v_{q_h}}_{H^1(\mathcal K)}^2 \right)^{1/2}\\
&\lesssim m^{1/2} \seminormJ{q_h} \norm{\xx v_{q_h} }_{H^1(\Omega)} \le \frac{m^{1/2}}{\beta}  \seminormJ{q_h} \norm{q_h}_{L^2(\Omega)}.
\end{split}
\end{equation}
Inserting the two bounds~\eqref{bound:I} and~\eqref{bound:II} into~\eqref{initial-bound:B}, we obtain
\begin{equation} \label{bound:B}
\circled{B} \lesssim \frac{\ell}{\beta} \left(\sup_{\xx v_h \in \Vh \setminus {\{\xx 0\}}} \frac{b_h\left( q_h , \xx v_h \right) }{\normVh{\xx v_h }} \right)\norm{q_h}_{L^2(\Omega)}+ \frac{m^{1/2}}{\beta}  \seminormJ{q_h} \norm{q_h}_{L^2(\Omega)}.
\end{equation}
Collecting~\eqref{bound:A}, \eqref{bound:C}, and~\eqref{bound:B} into~\eqref{bound:ABC}, we arrive at
\begin{align*}
\norm{ q_h }_{L^2\left( \Omega \right)}
& \lesssim \left(\frac{\ell^{1/2}+\ell}{\beta}\right) \sup_{\xx v_h \in \Vh \setminus {\{\xx 0\}}} \frac{ b_h\left( q_h , \xx v_h \right) }{\normVh{\xx v_h }} + \left(\frac{m^{1/2}+1}{\beta}\right) \seminormJ{q_h}\\
& \lesssim \frac{1}{\beta}\max\left\{ \ell^{1/2}(1+\ell^{1/2}), m^{1/2}+1\right\} \left( \sup_{\xx v_h \in \Vh \setminus {\{\xx 0\}}} \frac{ b_h\left( q_h , \xx v_h \right) }{\normVh{\xx v_h }} + \seminormJ{q_h}\right).
\end{align*}
The assertion follows with the discrete generalized \textit{inf-sup} constant having the behaviour in~\eqref{inf-sup:constant}.
\end{proof}
\begin{remark}
\label{rem:betah}
The constant of the generalized \textit{inf-sup} condition stated in Proposition~\ref{prop:gen_inf_sup} is uniform with respect to the mesh size but depends on the polynomial approximation degrees $\ell$ and $m$; see~\eqref{inf-sup:constant}.
This implies that $\beta_{h} \searrow 0$ as $\ell,m \nearrow + \infty$.

In Section \ref{sec:computations_inf_sup}, we will present some computations to assess numerically the sharpness of the \textit{inf-sup} constant~$\beta_h$, for different mesh configurations and polynomial orders.
We will find out that Assumption~\ref{ass:star} does not seem necessary in the proof of Proposition~\ref{prop:gen_inf_sup}. The analysis with milder assumptions is under investigation.
\end{remark}

\subsection{Well-posedness of the discret Stokes problem via the Banach-Ne\v{c}as-Babu\v{s}ka theorem.}

To prove that the discrete problem~\eqref{eq:abs_stokes} is well-posed,  we
first recall the following abstract result; see, e.g, \cite{ern2013theory}.
\begin{theorem}[Banach-Ne\v{c}as-Babu\v{s}ka] \label{th:BNB}
Let $\mathcal W$ be a Banach space and~$\mathcal V$ a reflexive Banach space.
Let $B \in \mathcal L (\mathcal W \times \mathcal V ; \R)$ and $f \in
\mathcal{V}'$, where  $\mathcal{V}'$ is the dual space of $V$.
Then, the problem
\begin{equation*}
\text{find } u \in \mathcal{W} \quad \text{ such that } \quad B(u,v) = f(v) \quad \forall v \in \mathcal{V}
\end{equation*}
is well-posed if and only if
\begin{equation}
\tag*{[BNB(i)]} \exists \alpha > 0 \quad \inf_{w \in \mathcal W} \sup_{v \in \mathcal V} \frac{B(w,v)}{\norm{w}_{\mathcal W} \norm{v}_{\mathcal V}} \ge \alpha;\label{eq:bnb1}
\end{equation}
\begin{equation}
\tag*{[BNB(ii)]} \forall v \in \mathcal V \quad \left( \forall w \in \mathcal W \quad B(w,v) = 0 \right) \quad \Rightarrow \quad ( v = 0 ). \label{eq:bnb2}
\end{equation}
\end{theorem}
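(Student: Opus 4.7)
The plan is to recognize the Banach--Ne\v{c}as--Babu\v{s}ka theorem as a functional-analytic characterization of bijectivity of the operator naturally associated with~$B$, and to deduce it from the closed range theorem together with Hahn--Banach. Concretely, introduce the bounded linear operator $A: \mathcal{W} \to \mathcal{V}'$ defined by $\langle Aw, v\rangle_{\mathcal{V}',\mathcal{V}} = B(w,v)$ for all $w \in \mathcal{W}$, $v \in \mathcal{V}$. Boundedness of $A$ follows from $B \in \mathcal{L}(\mathcal{W}\times\mathcal{V};\mathbb{R})$. Well-posedness of the variational problem $B(u,v) = f(v)$ is then equivalent to $A$ being a bijection from $\mathcal{W}$ onto $\mathcal{V}'$ with bounded inverse (the bound $\|u\|_{\mathcal{W}} \lesssim \|f\|_{\mathcal{V}'}$ being exactly continuity of $A^{-1}$). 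The whole argument reduces to characterizing bijectivity of $A$.

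First I would establish the necessity: if $A$ is bijective with continuous inverse, then $\|w\|_{\mathcal{W}} \le \|A^{-1}\| \, \|Aw\|_{\mathcal{V}'}$ for every $w \in \mathcal{W}$, which is exactly \ref{eq:bnb1} with $\alpha = \|A^{-1}\|^{-1}$. For \ref{eq:bnb2}, if $v \in \mathcal{V}$ annihilates the range of $A$, i.e.\ $\langle Aw,v\rangle = 0$ for all $w \in \mathcal{W}$, then since $A$ is surjective, $v$ annihilates all of $\mathcal{V}'$, and reflexivity of $\mathcal{V}$ (identifying $\mathcal{V}$ with $\mathcal{V}''$) forces $v=0$.

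For sufficiency I would argue as follows. Condition \ref{eq:bnb1} is equivalent to the inequality $\|Aw\|_{\mathcal{V}'} \ge \alpha \|w\|_{\mathcal{W}}$, which shows simultaneously that $A$ is injective and that its range $\operatorname{Ran}(A) \subset \mathcal{V}'$ is closed (a Cauchy sequence in $\operatorname{Ran}(A)$ lifts via the inequality to a Cauchy sequence in $\mathcal{W}$, whose limit is mapped by continuity of $A$ into the limit in $\mathcal{V}'$). It then remains to show $\operatorname{Ran}(A) = \mathcal{V}'$. Suppose by contradiction that $\operatorname{Ran}(A) \subsetneq \mathcal{V}'$; since it is closed and proper, Hahn--Banach yields a nonzero element $\Phi \in \mathcal{V}''$ that vanishes on $\operatorname{Ran}(A)$. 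Reflexivity of $\mathcal{V}$ lets us represent $\Phi$ as a unique nonzero $v \in \mathcal{V}$, and then $\langle Aw, v\rangle = 0$ for every $w \in \mathcal{W}$, i.e.\ $B(w,v)=0$ for all $w$, contradicting \ref{eq:bnb2}. Therefore $A$ is a bijection, and \ref{eq:bnb1} provides the continuity of $A^{-1}$ with $\|A^{-1}\|\le 1/\alpha$, completing the proof.

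The main obstacle, in the sense of being the only non-trivial step, is the surjectivity argument: one must combine the closedness of the range coming from the discrete stability \ref{eq:bnb1} with the density-type information carried by \ref{eq:bnb2}, and it is precisely at this point that reflexivity of $\mathcal{V}$ is indispensable (otherwise Hahn--Banach only produces an element of $\mathcal{V}''$ that cannot be pushed back into $\mathcal{V}$ to contradict \ref{eq:bnb2}). All other steps are standard manipulations with the Riesz-type identification $B \leftrightarrow A$, boundedness of inverses, and the closed range argument for $A$.
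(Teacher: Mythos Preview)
Your proof is correct and is essentially the standard argument for the Banach--Ne\v{c}as--Babu\v{s}ka theorem (operator reformulation, closed range from the lower bound, and surjectivity via Hahn--Banach plus reflexivity). However, the paper does not actually prove Theorem~\ref{th:BNB}: it merely \emph{recalls} it as a known abstract result, with a reference to the literature. The subsequent ``Proof of \ref{eq:bnb1}'' and ``Proof of \ref{eq:bnb2}'' in the paper are not proofs of the theorem itself but rather verifications that the specific discrete bilinear form $\mathcal{B}_h$ on $\Vh\times\Qh$ satisfies the two BNB conditions, so that the theorem can be \emph{applied} to conclude well-posedness of the discrete Stokes problem. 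In short, you have supplied a valid proof of a result the paper takes for granted.
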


In the following, we set $B_h:\Vh \times \Qh \longrightarrow \Vh \times \Qh$ defined as
$$
(B_h  (\xx v_h, q_h), (\xx w_h, z_h))_{L^2(\Omega)}=\mathcal{B}_h((\xx v_h, q_h);(\xx w_h, z_h))
$$
$\forall (\xx v_h, q_h), (\xx w_h, z_h) \in \Vh \times \Qh$. Under the
hypotheses of Proposition~\ref{prop:gen_inf_sup}, we show \newline that~\ref{eq:bnb1} and~\ref{eq:bnb2} are valid for the choice~$B={B}_h$, $\mathcal W=\mathcal V=
\Vh \times \Qh$ endowed with the energy norm~$\normE{(\cdot, \cdot)}$. This implies that problem~\eqref{eq:abs_stokes} is well-posed.

To this aim, we notice that
\begin{equation}\label{eq:weakcoerc}
\begin{aligned}
\mathcal{B}_h((\xx u_h, p_h);(\xx u_h, p_h)) &= a_h (\xx u_h , \xx u_h) + b_h (p_h , \xx u_h) - b_h (p_h, \xx u_h) + s_h (p_h, p_h) \\
& \gtrsim \normVh{\xx u_h}^2 + \seminormJ{p_h}^2 \quad \quad \forall \left( \xx u_h, p_h\right) \in \Vh \times \Qh,
\end{aligned}
\end{equation}
provided that  Assumption~\ref{ass:mesh_all} is valid and the stabilization constant~$\gamma_{v}$ appearing in Definition~\ref{def:sigmav_sigmap} is chosen sufficiently large.

\begin{proof}[Proof of \ref{eq:bnb1}]
Given~$\left( \xx u_h, p_h\right) \in \Vh \times\Qh$, we have
\begin{align}
\label{eq:bnb_a}
\mathcal{B}_h((\xx u_h, p_h);(\xx u_h, p_h))
&\leq  \mathbb{M} \normE{(\xx u_h, p_h)},
\end{align}
where $\normE{\cdot}$ is defined as in \eqref{eq:Enorm} and
$$
\mathbb{M} = \sup_{\substack{(\xx v_h, q_h) \in \Vh \times \Qh \\
(\xx v_h, q_h) \neq  (\xx 0, 0)}} \frac{\mathcal{B}_h((\xx u_h, p_h);(\xx v_h,
q_h))}{\normE{(\xx v_h, q_h)}}.
$$
Using~\eqref{eq:weakcoerc} and~\eqref{eq:bnb_a}, we get
\begin{equation}\label{eq:proofBNB1}
\normVh{\xx u_h}^2 + \seminormJ{p_h}^2 \lesssim \mathcal{B}_h((\xx u_h, p_h);(\xx u_h, p_h))  \leq \mathbb{M} \normE{(\xx u_h, p_h)}.
\end{equation}
Thanks to Proposition~\ref{prop:gen_inf_sup} and to the fact that
$$
b_h ( p_h , \xx v_h ) = \mathcal B_h((\xx u_h, p_h),(\xx v_h, 0)) - a_h(\xx u_h, \xx v_h) \quad \forall \xx v_h \in \Vh,
$$
we have
\begin{align*}
\beta_{h} \normQ{p_h} & \leq \sup_{ \xx 0 \neq\xx v_h\in \Vh} \frac{b_{h}(p_h ,
\xx v_h) }{\normVh{\xx v_h}} + \seminormJ{p_h} \\
& = \sup_{ \xx 0 \neq\xx v_h\in \Vh} \frac{\mathcal{B}_h((\xx u_h, p_h),(\xx v_h, 0)) - a_h(\xx u_h, \xx v_h) }{\normVh{\xx v_h}} + \seminormJ{p_h} \\
&\leq \sup_{ \xx 0 \neq\xx v_h\in \Vh} \frac{\abs{\mathcal{B}_h((\xx u_h, p_h),(\xx v_h, 0))}}{\normE{(\xx v_h, 0)}}
+ \sup_{ \xx 0 \neq\xx v_h\in \Vh} \frac{\abs{a_h(\xx u_h, \xx v_h)}}{\normVh{\xx v_h}} + \seminormJ{p_h} \\
& \lesssim \sup_{ \xx 0 \neq\xx v_h\in \Vh} \frac{\abs{\mathcal{B}_h((\xx u_h,
p_h),(\xx v_h, 0))}}{\normE{(\xx v_h, 0)}} + \normVh{\xx u_h} + \seminormJ{p_h}
\\
& = \mathbb{M} + \normVh{\xx u_h} + \seminormJ{p_h}.
\end{align*}
Using~\eqref{eq:proofBNB1}, we deduce
\begin{align*}
\beta_{h} ^2 \normQ{p_h}^2
& \lesssim \mathbb{M}^2 + \normVh{\xx u_h}^2 + \seminormJ{p_h}^2
\leq \mathbb{M}^2 + \mathbb{M} \normE{(\xx u_h, p_h)}.
\end{align*}
From the definition of $\normE{(\cdot, \cdot)}$, using again
\eqref{eq:proofBNB1}, the above bound, and the Young's inequality with a positive
parameter $\gamma$, we have
\begin{align*}
\beta_{h} ^{2}  & \normE{(\xx u_h, p_h)}^2 =\beta_{h} ^{2}(\normVh{\xx u_h}^2 +
    \normQ{p_h}^2 + \seminormJ{p_h}^2) \\ &\lesssim \beta_{h} ^{2} \mathbb{M}
    \normE{(\xx u_h, p_h)}  + \beta_{h} ^{2}\normQ{p_h}^2 \lesssim \beta_{h} ^{2}
    \mathbb{M} \normE{(\xx u_h, p_h)} + \mathbb{M}^2 + \mathbb{M} \normE{(\xx u_h,
    p_h)}\\ & =(1+\beta_{h} ^{2})\mathbb{M} \normE{(\xx u_h, p_h)} +  \mathbb{M}^2
    \leq \gamma \normE{(\xx u_h, p_h)}^2 +
    \left(1+\frac{(1+\beta_{h} ^{2})^2}{\gamma} \right)
    \mathbb{M}^2.
\end{align*}
Thus, we write
\begin{align*}
(\beta_{h} ^{2}- \gamma) \normE{(\xx u_h, p_h)}^2
& \lesssim  \left(1+\frac{(1+\beta_{h} ^{2})^2}{\gamma} \right) \mathbb{M}^2.
\end{align*}
Choosing $\gamma$ to be equal to $\beta_{h} ^{2}/2C$, being~$C$ the hidden
constant in the above inequality, we finally arrive at
\begin{align*}
\normE{(\xx u_h, p_h)}^2
 \lesssim \left(\frac{1}{\beta_{h} ^{2}}+\frac{(1+\beta_{h} ^{2})^2}{\beta_{h} ^{4}} \right) \mathbb{M}^2
 \lesssim \frac{1}{\beta_{h} ^{2}} \mathbb{M}^2,
\end{align*}
i.e., $\normE{(\xx u_h, p_h)} \lesssim  \alpha \mathbb{M}$ with~$\alpha =O(\beta_{h})$. The assertion follows from the definition of~$\mathbb M$.
\end{proof}

Next, we show that \ref{eq:bnb2} is valid, provided that the stabilization constant~$\gamma_v$ appearing in Definition \ref{def:sigmav_sigmap} is chosen sufficiently large.

\begin{proof}[Proof of \ref{eq:bnb2}]
Let $(\xx u_h, p_h)\in \Vh \times \Qh$ be such that
$$
\mathcal{B}_h((\xx v_h, q_h);(\xx u_h, p_h)) = 0 \qquad \forall (\xx v_h, q_h) \in \Vh
\times \Qh.
$$
By taking $(\xx v_h, q_h)=(\xx u_h, p_h)$ and using \eqref{eq:weakcoerc}, the couple $(\xx {u_h}, p_h)$ satisfies
$$
0 = a_h (\xx u_h , \xx u_h) +  b_h (p_h , \xx u_h) - b_h (p_h, \xx u_h) + s_h (p_h, p_h).
$$
Provided that the stabilization constant $\gamma_v$ in Definition~\ref{def:sigmav_sigmap} is chosen sufficiently large, this implies
$$
\normVh{ \xx u_h }^2 + s_h(p_h,p_h) \lesssim 0,
$$
whence $\xx u_h = \xx 0$ follows.

Next, we prove that $p_h = 0$.
Thanks to the continuous \textit{inf-sup} condition~\eqref{continuous:inf-sup2}, we have
\begin{equation*}
\normQ{p_h}^2=-b(p_h, \xx v_{p_h})= \mathcal{B}_h((\xx v_{p_h}, 0);(\xx 0, p_h))=0,
\end{equation*}
whence the assertion follows.
\end{proof}

Summarizing the above computations, we eventually state the main result of the
section, namely the well-posedness of problem~\eqref{eq:abs_stokes}.
\begin{theorem}
Under the hypotheses of Proposition~\ref{prop:gen_inf_sup}, \ref{eq:bnb1} and~\ref{eq:bnb2} are valid.
Notably, the constant~$\alpha$ in~\eqref{eq:bnb1} satisfies $\alpha =O(\beta_{h})$, where $\beta_{h}$ is defined as in Proposition~\ref{prop:gen_inf_sup}.
Therefore, thanks to Theorem~\ref{th:BNB}, the discrete problem~\eqref{eq:abs_stokes} is
well-posed.
\end{theorem}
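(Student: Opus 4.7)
The plan is to invoke the Banach--Ne\v{c}as--Babu\v{s}ka theorem (Theorem~\ref{th:BNB}) with the choice $\mathcal W = \mathcal V = \Vh \times \Qh$ equipped with the energy norm $\normE{\cdot}$ defined in~\eqref{eq:Enorm}, and with the bilinear form $B = \mathcal B_h$ and linear functional $F$ from~\eqref{eq:abs_bil_form}. Since $\Vh \times \Qh$ is finite dimensional, both spaces are trivially reflexive Banach spaces, and the continuity of $\mathcal B_h$ on the product space follows from the continuity of $a_h(\cdot,\cdot)$ (already recalled after Lemma~\ref{lm:H1L2inverse}) together with the obvious continuity of $b_h(\cdot,\cdot)$ and $s_h(\cdot,\cdot)$ in the norms defining $\normE{\cdot}$. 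Thus, the task reduces to verifying the two structural conditions \ref{eq:bnb1} and \ref{eq:bnb2}, and then quoting Theorem~\ref{th:BNB} to conclude well-posedness of~\eqref{eq:abs_stokes}.

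The first main step is to establish \ref{eq:bnb1}, which requires controlling $\normE{(\xx u_h, p_h)}$ by the supremum $\mathbb M$. I would first exploit the weak coercivity identity~\eqref{eq:weakcoerc}, obtained by testing $\mathcal B_h$ against $(\xx u_h, p_h)$ itself, to bound $\normVh{\xx u_h}^2 + \seminormJ{p_h}^2$ by $\mathbb M \cdot \normE{(\xx u_h, p_h)}$; this is immediate once $\gamma_v$ is chosen large enough to absorb the consistency terms in $a_h$. The nontrivial part is recovering control on $\normQ{p_h}$. The idea is to isolate $b_h(p_h,\xx v_h) = \mathcal B_h((\xx u_h, p_h),(\xx v_h,0)) - a_h(\xx u_h, \xx v_h)$, then apply the generalized \textit{inf-sup} condition of Proposition~\ref{prop:gen_inf_sup} to get $\beta_h \normQ{p_h} \lesssim \mathbb M + \normVh{\xx u_h} + \seminormJ{p_h}$, and finally combine with the previous coercivity bound. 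A Young-type absorption argument with a small parameter~$\gamma$ (tuned proportionally to $\beta_h^2$) then yields $\normE{(\xx u_h, p_h)} \lesssim \beta_h^{-1} \mathbb M$, giving $\alpha = O(\beta_h)$.

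For \ref{eq:bnb2}, I would argue by contradiction-free direct reasoning: let $(\xx u_h, p_h)$ satisfy $\mathcal B_h((\xx v_h,q_h),(\xx u_h, p_h)) = 0$ for all test functions. Testing with $(\xx v_h, q_h) = (\xx u_h, p_h)$ and using the weak coercivity~\eqref{eq:weakcoerc} immediately forces $\xx u_h = \xx 0$ and $\seminormJ{p_h} = 0$. To conclude $p_h = 0$, I lift $p_h \in \Qh \subset L^2_0(\Omega)$ to a velocity $\xx v_{p_h} \in \xx V$ via the continuous \textit{inf-sup} condition~\eqref{continuous:inf-sup2} with $\nabla \cdot \xx v_{p_h} = p_h$, and then test $\mathcal B_h$ with $(\xx v_{p_h},0)$: this yields $\normQ{p_h}^2 = -b(p_h,\xx v_{p_h}) = 0$, where the jump-skeleton terms vanish since $\jump{\xx v_{p_h}} = \xx 0$.

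The main obstacle is clearly the derivation of \ref{eq:bnb1}: the interplay between the generalized (rather than standard) \textit{inf-sup} inequality and the weak coercivity requires a careful absorption with a parameter calibrated to $\beta_h$, so that the resulting stability constant tracks the polynomial-degree dependence $\beta_h = O(\max\{\ell^{1/2}(1+\ell^{1/2}),m^{1/2}+1\}^{-1}\beta)$ without further deterioration. The condition \ref{eq:bnb2} and the application of Theorem~\ref{th:BNB} are essentially bookkeeping once \ref{eq:bnb1} has been proved, so the bulk of the work is confined to the stability estimate.
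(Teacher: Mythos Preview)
Your proposal is correct and follows the paper's proof essentially verbatim: the same weak-coercivity~\eqref{eq:weakcoerc} combined with Proposition~\ref{prop:gen_inf_sup} and a Young-inequality absorption tuned to~$\beta_h^2$ for \ref{eq:bnb1}, and the same ``test with $(\xx u_h,p_h)$, then lift $p_h$ via the continuous \textit{inf-sup}'' argument for \ref{eq:bnb2}. One shared subtlety worth noting: the lifted velocity $\xx v_{p_h}\in\xx V$ is generally not in $\Vh$, so the hypothesis of \ref{eq:bnb2} does not apply to it directly; this is harmless, since after $\xx u_h=\xx 0$ and $\seminormJ{p_h}=0$ you can instead conclude $p_h=0$ by applying Proposition~\ref{prop:gen_inf_sup} to the identity $b_h(p_h,\xx v_h)=0$ for all $\xx v_h\in\Vh$, or simply invoke that $\mathcal W=\mathcal V$ is finite dimensional so \ref{eq:bnb1} already implies \ref{eq:bnb2}.
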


The constant~$\alpha$ in~\eqref{eq:bnb1} deteriorates as the polynomial
degree grows. This is due to the use of the polynomial inverse estimates, which
yield a discrete \textit{inf-sup} constant depending on the polynomial degree.

\begin{figure}[!htbp]
    \centering
    \includegraphics[width=0.35\textwidth]{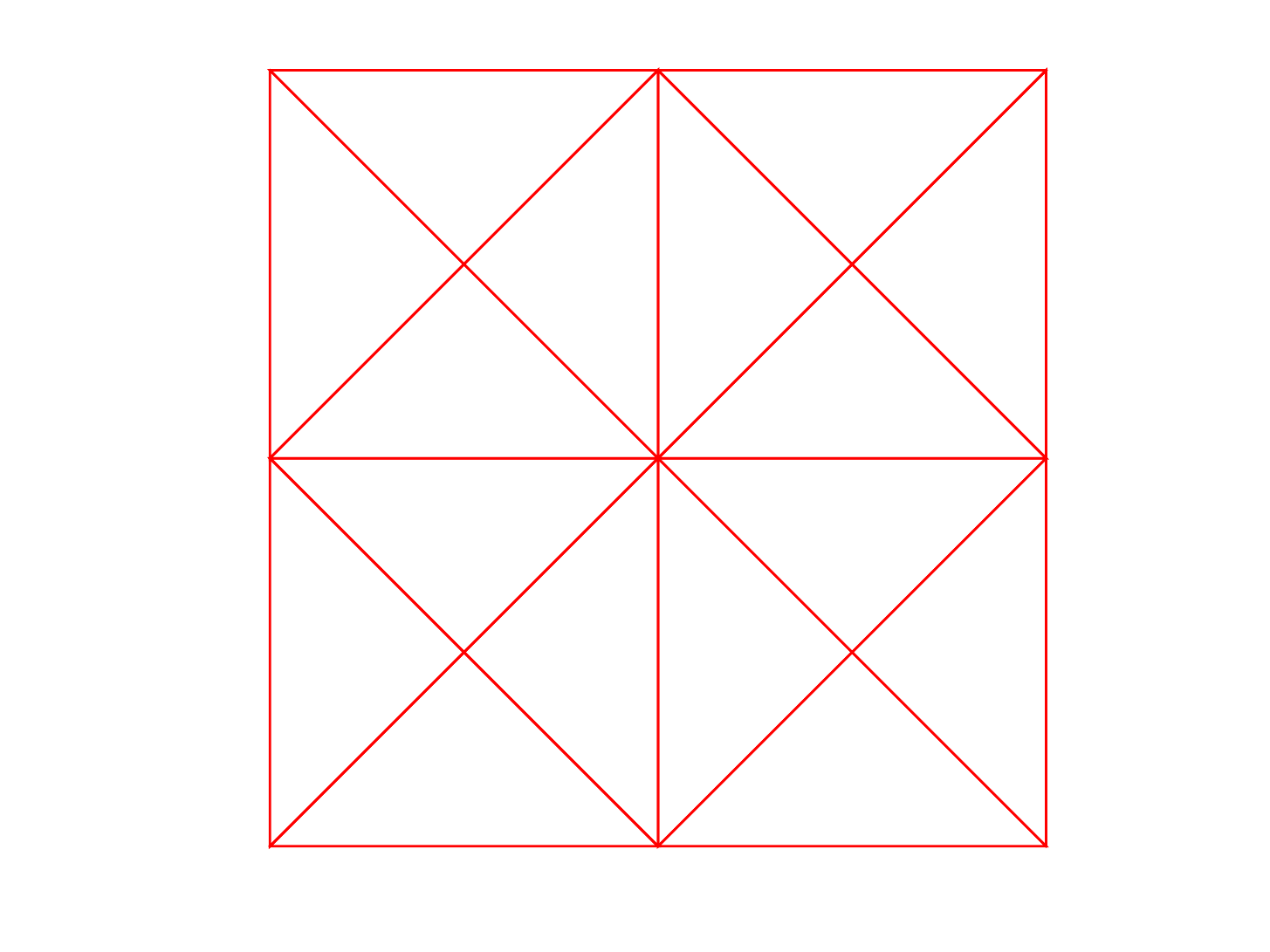}%
    \includegraphics[width=0.35\textwidth]{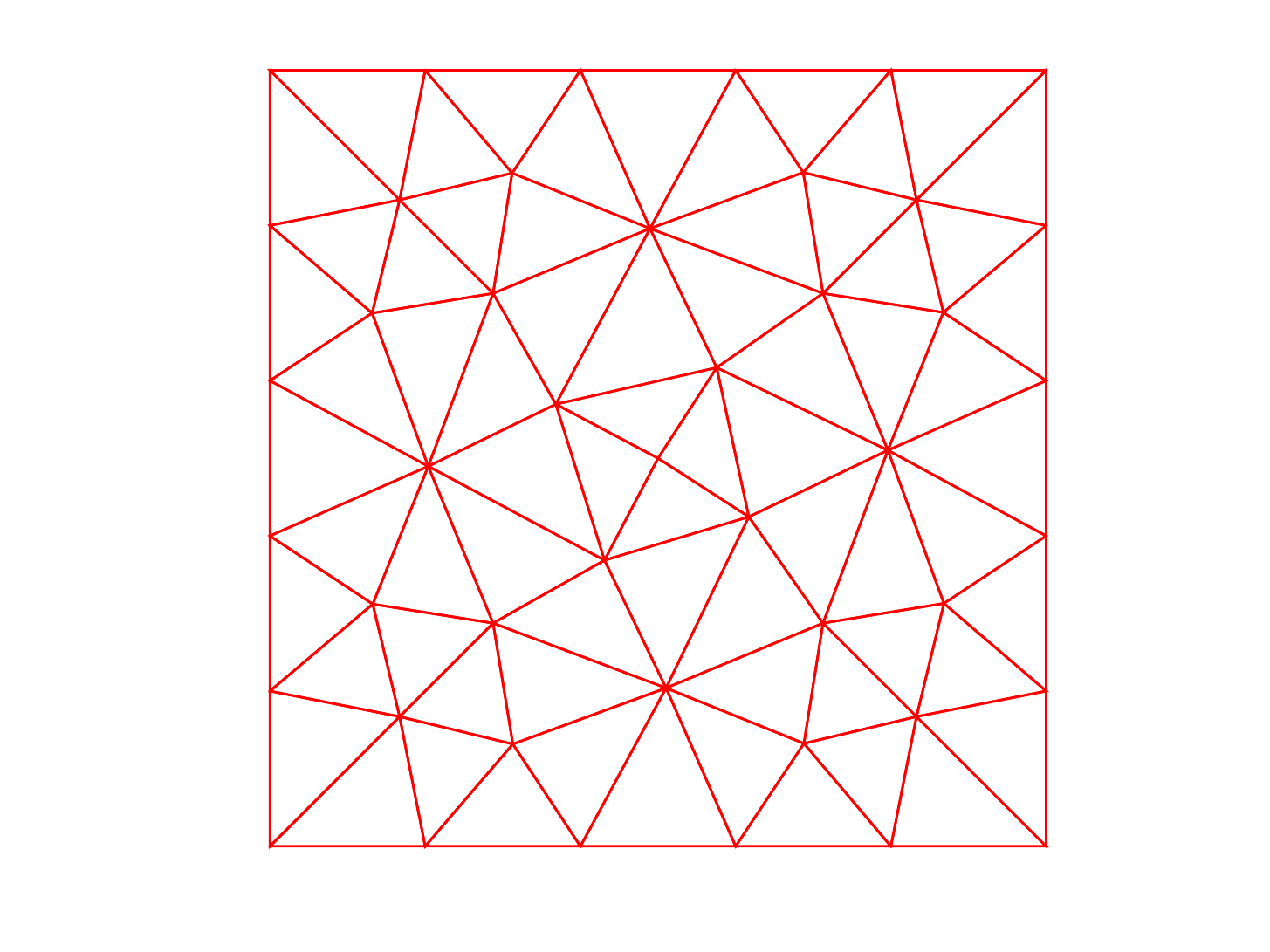}%
    \includegraphics[width=0.35\textwidth]{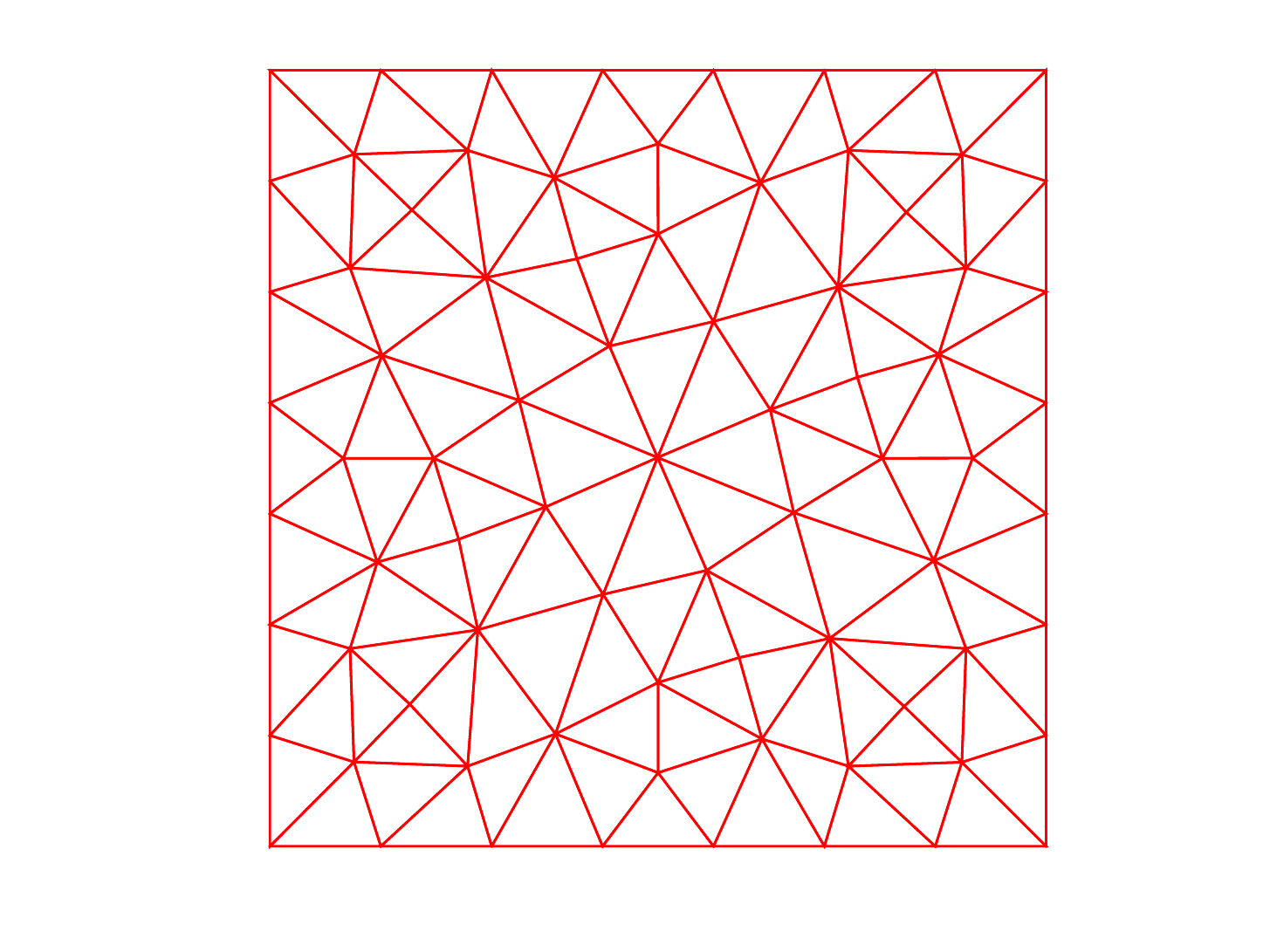}\\%
    \includegraphics[width=0.35\textwidth]{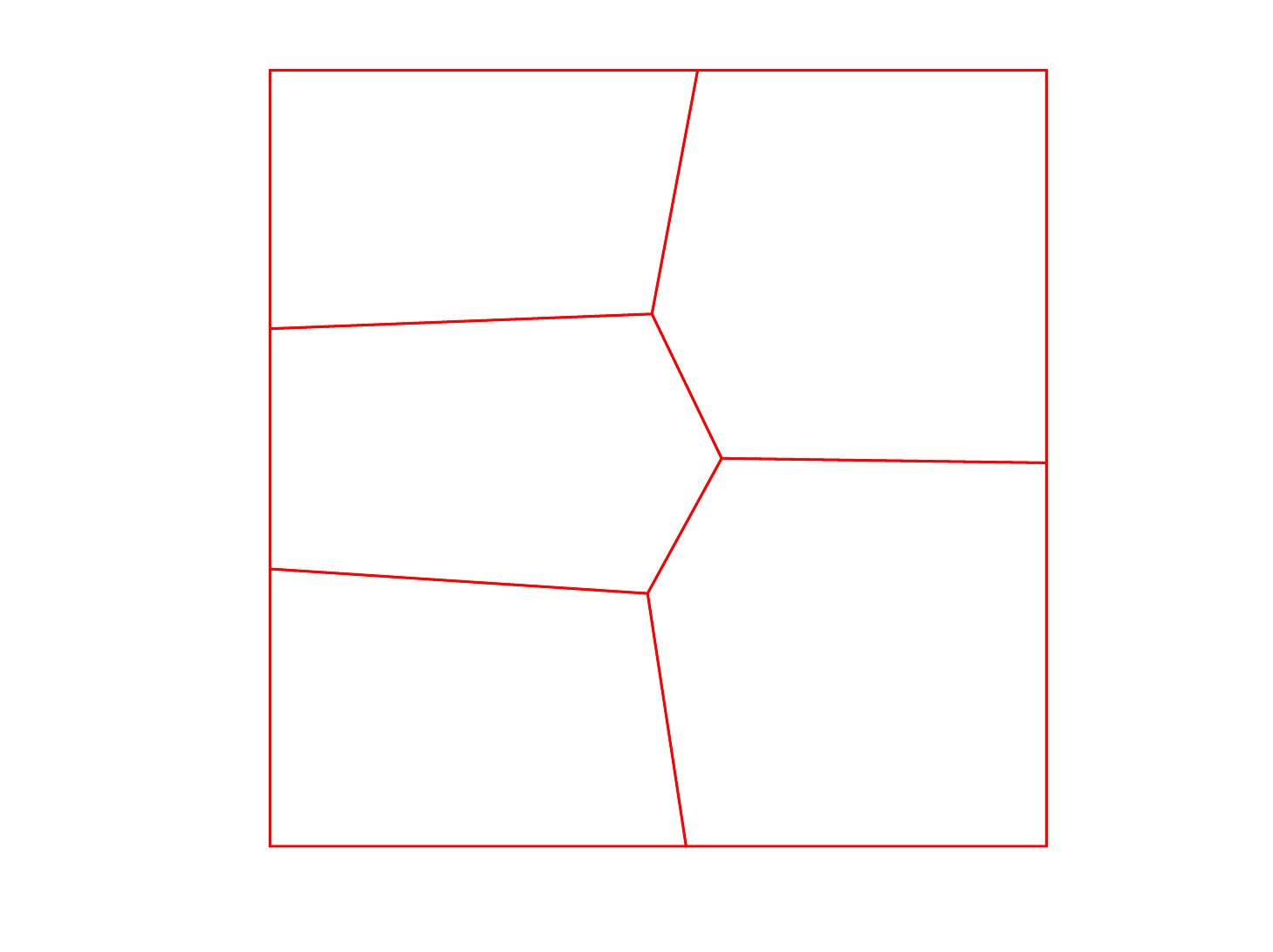}%
    \includegraphics[width=0.35\textwidth]{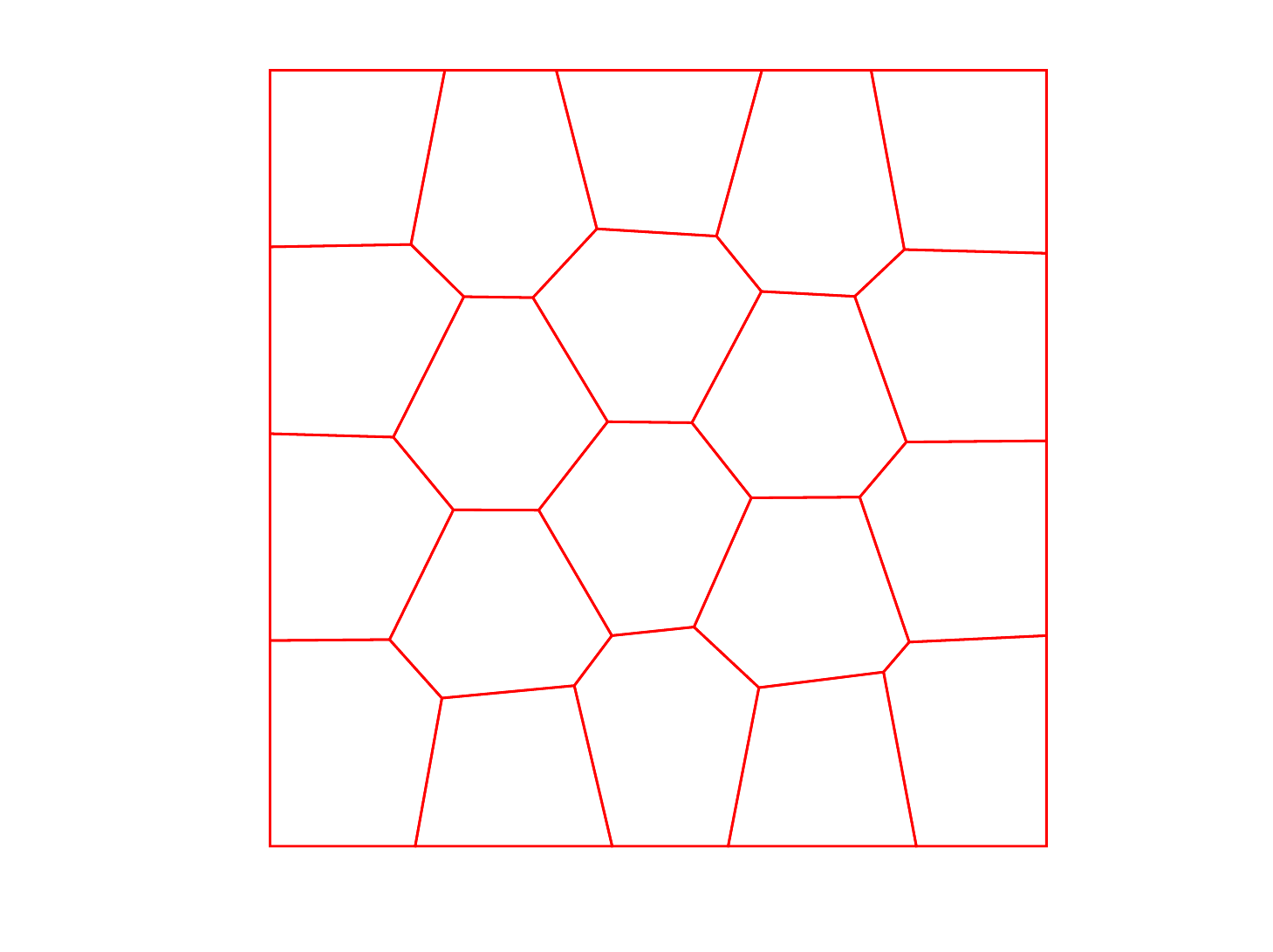}%
    \includegraphics[width=0.35\textwidth]{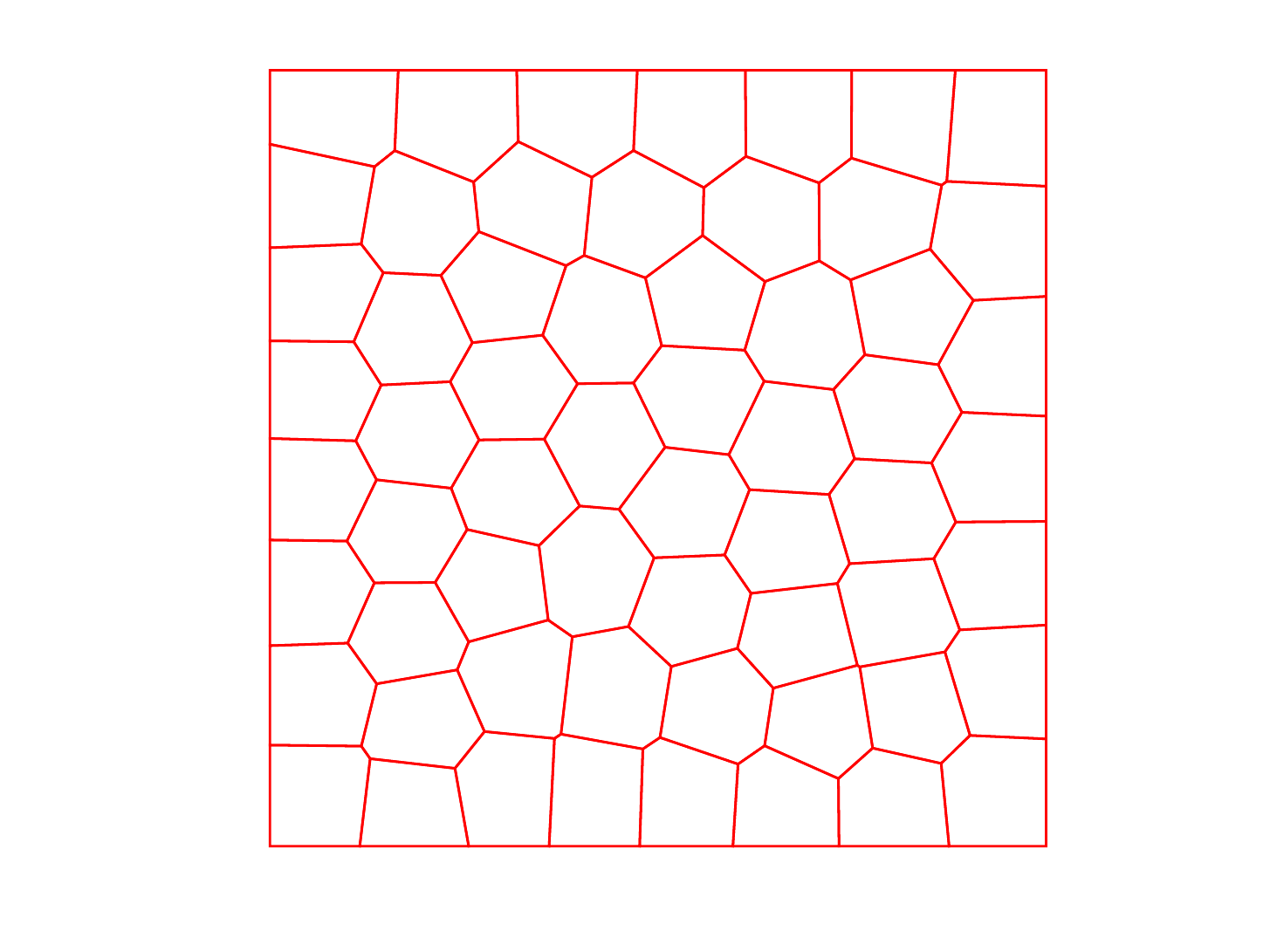}\\%
    \includegraphics[width=0.35\textwidth]{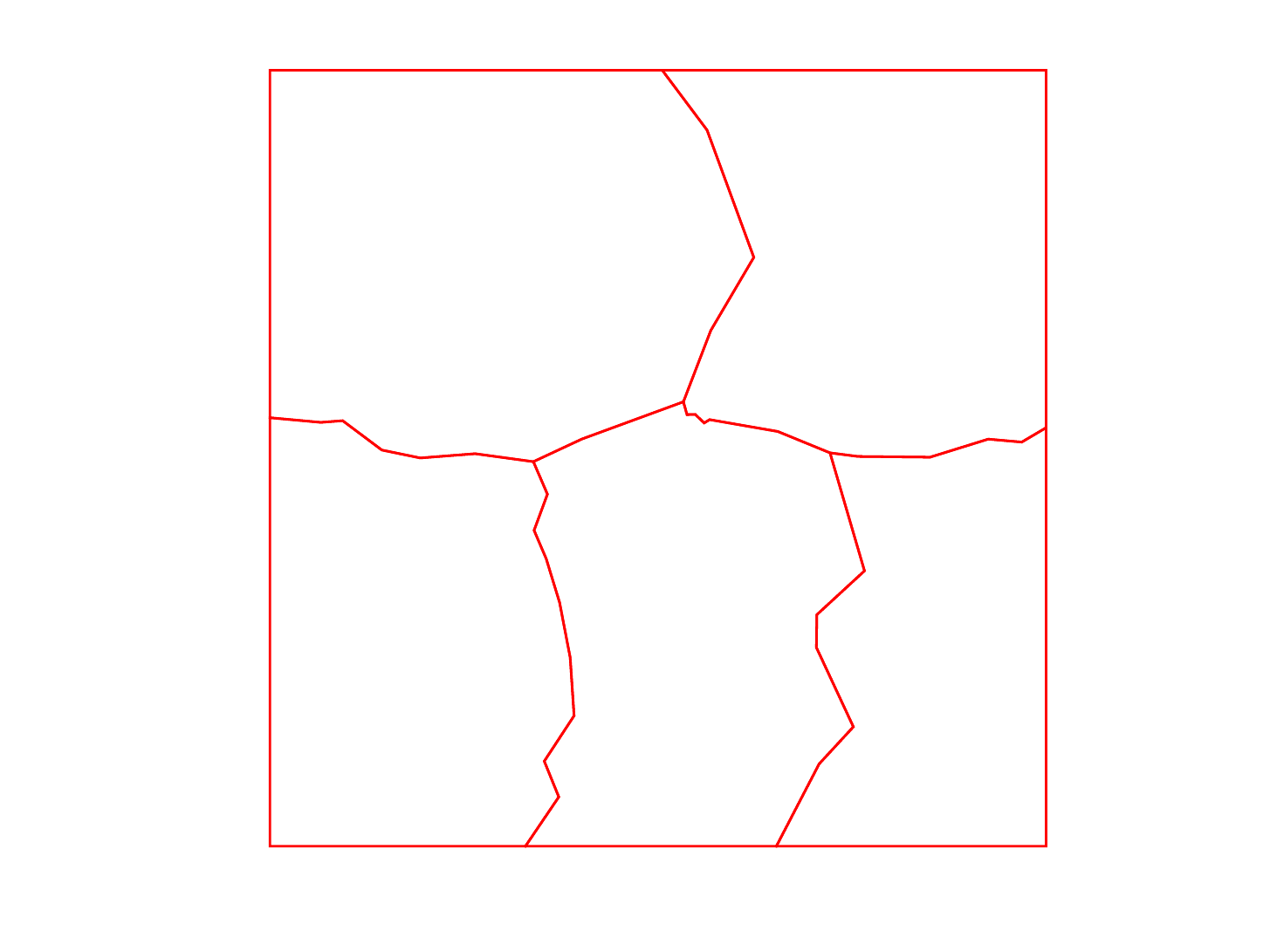}%
    \includegraphics[width=0.35\textwidth]{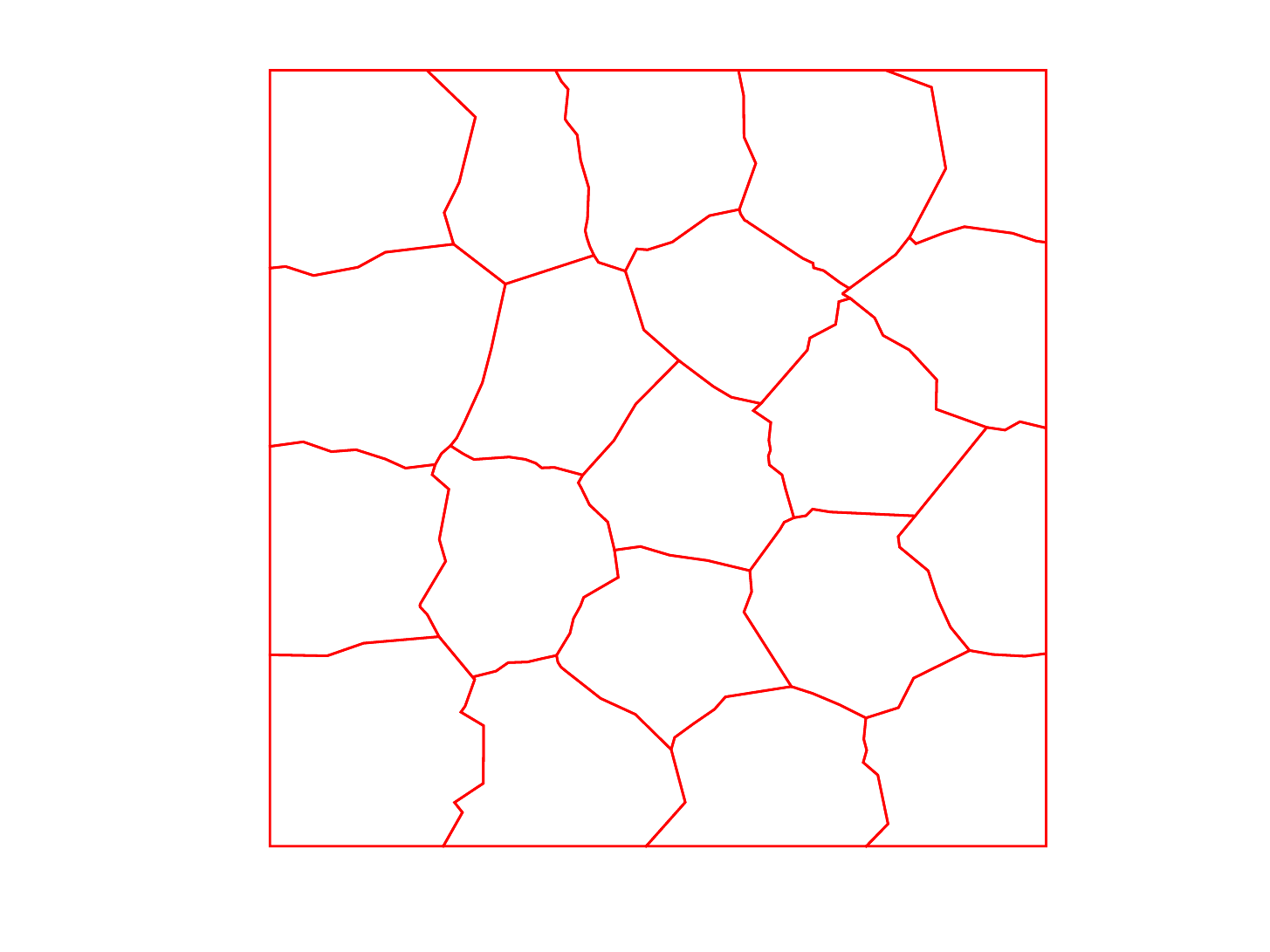}%
    \includegraphics[width=0.35\textwidth]{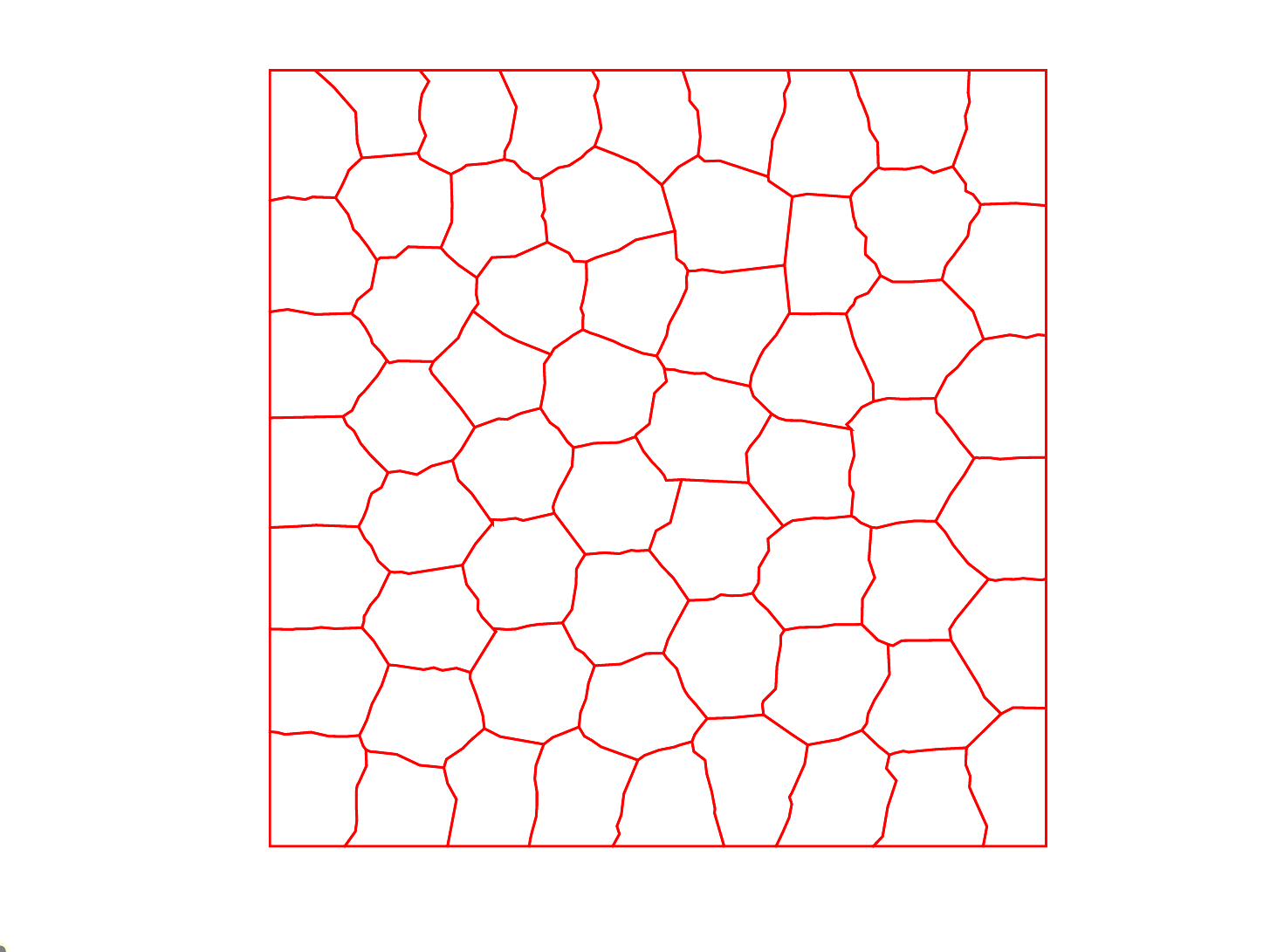}\\%
    \includegraphics[width=0.35\textwidth]{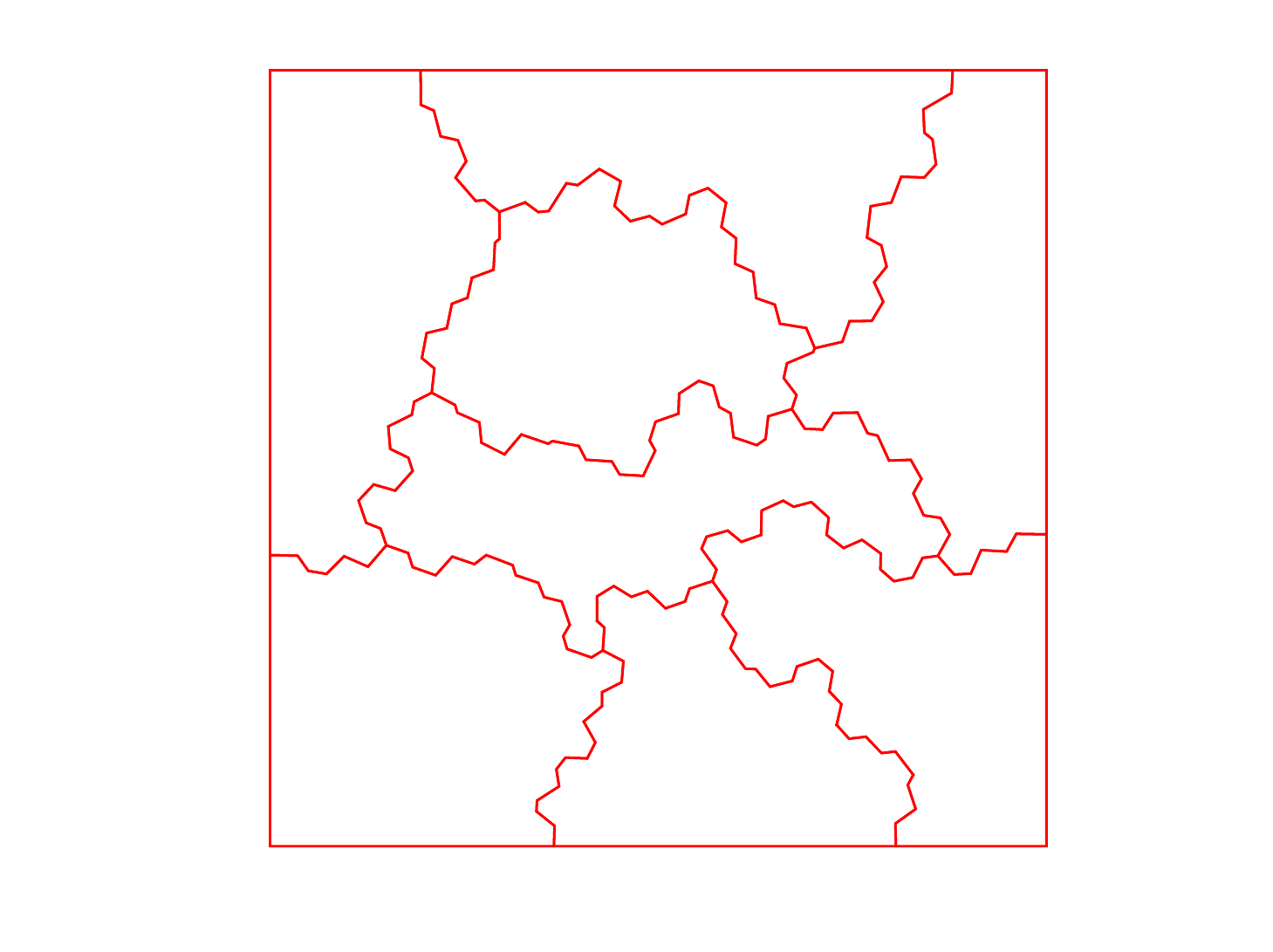}%
    \includegraphics[width=0.35\textwidth]{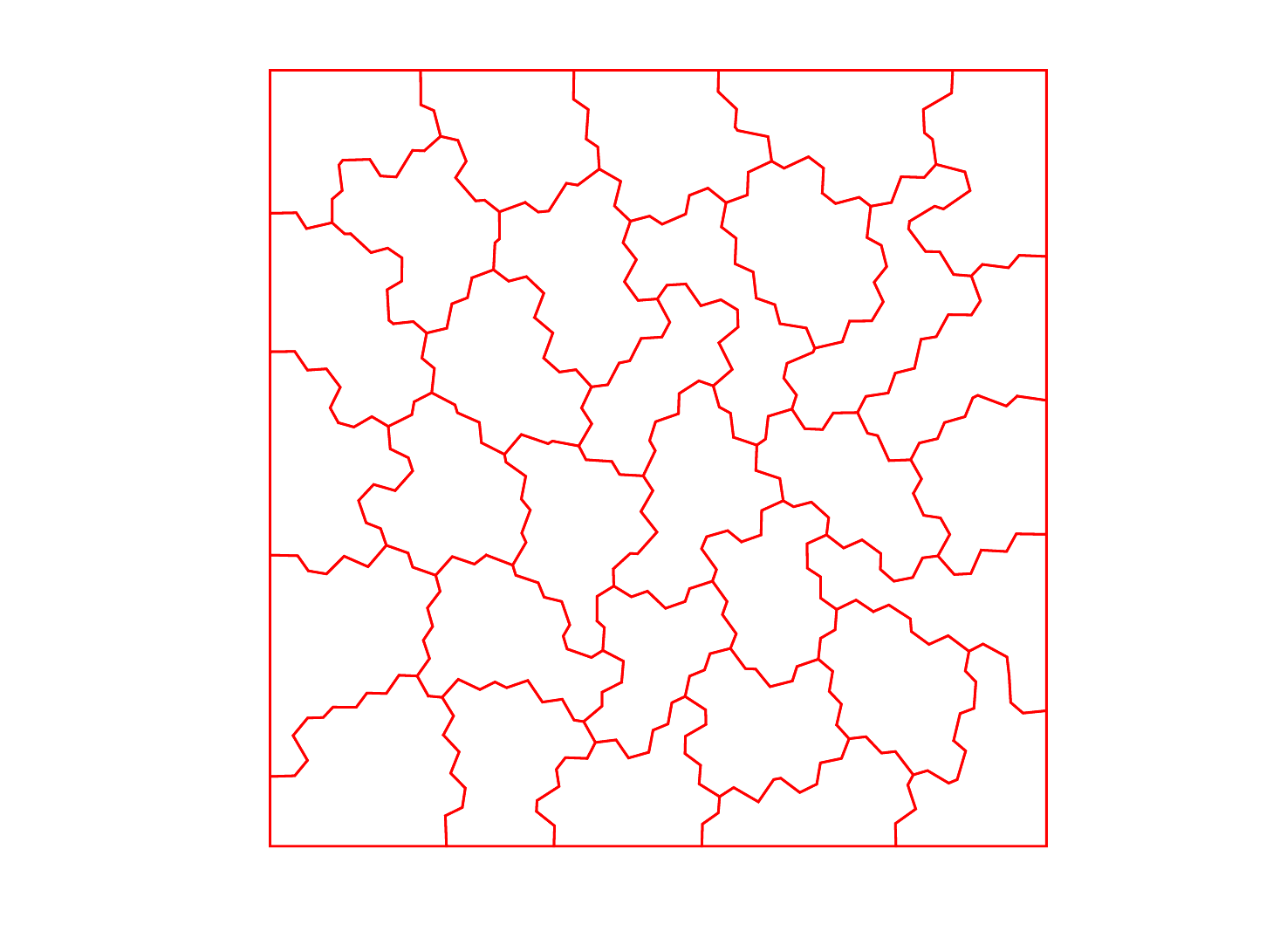}%
    \includegraphics[width=0.35\textwidth]{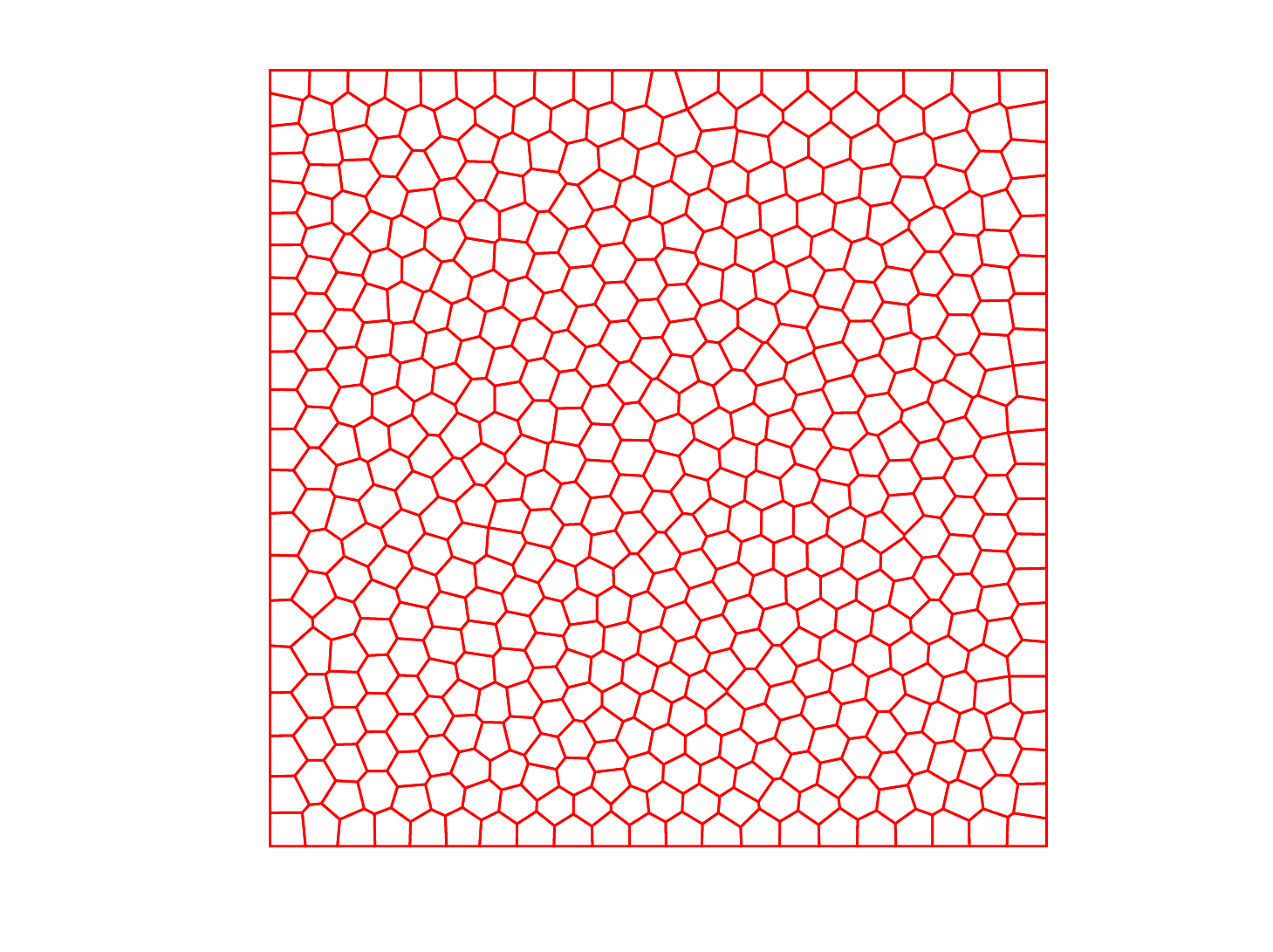}%
    \caption{From top to bottom: triangular  meshes with $N_{el} = 16, 68, 124$
    (from left to right); regular polygonal meshes with $N_{el} = 5, 20,
    60$ (from left to right); distorted polygonal  meshes with $N_{el} = 5, 20, 60$
    (from left to right); agglomerated polygonal meshes  with
    $N_{el} = 8, 32, 512$ (from left to right).}
    \label{fig:meshes}
\end{figure}

\section{Numerical evaluation of the generalized \textit{inf-sup} constant}
\label{sec:computations_inf_sup}

Denote the shape functions of~$\xx V_h^{\ell}$ and $Q_h^m$ by~$\left\{ \xx \varphi_i \right\}_{i=1}^{N_{\xx u}}$ and~$\left\{ \psi_j \right\}_{j=1}^{N_p}$,
and the corresponding number of degrees of freedom by~$N_{\xx u}$ and $N_p$, respectively.
We write~$\xx u_h = \sum_{j=1}^{N_{\xx u}} u_j \xx \varphi_i$ and $p_h = \sum_{j=1}^{N_p} p_j \psi_i$.
The algebraic form of the stationary Stokes problem corresponding to the problem in~\eqref{eq:semiDiscForm} reads
\begin{equation*}
    \left[
    \arraycolsep=8pt\def\arraystretch{2}
    \begin{array}{cc}
        A_h & B_h^T \\
        B_h & -S_h \\
    \end{array} \right]
    \left[
    \arraycolsep=8pt\def\arraystretch{2}
    \begin{array}{c}
        \xx U \\
        \xx P \\
    \end{array} \right]
    = \left[
    \arraycolsep=8pt\def\arraystretch{2}
    \begin{array}{c}
        \xx {F_h}\\
        \xx 0\\
    \end{array} \right],
\end{equation*}
where $\xx U\in \mathbb{R}^{N_{\xx u}}$ and $\xx P\in \mathbb{R}^{N_{\xx p}}$ are the vectors collecting the expansion coefficients $\{u_j\}_j$ and $\{p_j\}_j$, respectively,
whereas $A_h$, $B_h$, $S_h$, and~$\xx{f_h}$ denote the matrix representations of the discrete bilinear forms in~\eqref{eq:form_a_h}--\eqref{eq:form_s} and right-hand side in~\eqref{eq:semiDiscForm}.

We recall that the discrete \textit{inf-sup} condition given in Proposition~\ref{prop:gen_inf_sup} is given by
\begin{equation*}
\begin{aligned}
& \sup_{ \xx 0\neq \xx v_h \in \Vh} \frac{b_h(q_h , \xx v_h) }{\normVh{\xx v_h}} + \eta \seminormJ{q_h} \ge
\beta_{h} \normQ{q_h}
&& \forall q_h \in \Qh.
\end{aligned}
\end{equation*}
Here, we have added a parameter $\eta=\{0,1\}$, so as to address numerically the case where no pressure stabilization is added in the discrete formulation.
Introduce  the generalized eigenvalue problem
\begin{equation}
\label{eq:GEP}
G_h \xx x = \lambda T_h \xx x,
\end{equation}
with~$G_h = B_h A_h^{-1} B_h^T + \eta S_h$,  $T_h = M$, where $M$ is the mass matrix.
We distinguish two cases:
\begin{itemize}
\item If $\eta = 1$ (pressure stabilization), the discrete \textit{inf-sup} constant satisfies
$$
\beta_h \langle M \xx q , \xx q \rangle^{1/2} \le \langle B_h A_h^{-1} B_h^T \xx q , \xx q \rangle^{1/2} + \langle S_h \xx q , \xx q \rangle^{1/2} \quad \forall \xx q \in \R^{N_p}, \xx q \neq \xx 1{}.
$$
By noting that~$ a + b \le \left( \sqrt{a} + \sqrt{b} \right)^2 \le 2a + 2b$, we have
\begin{align*}
\beta_h^2 &\langle M \xx q , \xx q \rangle \le \langle B_h A_h^{-1} B_h^T \xx q
, \xx q \rangle + \langle S_h \xx q , \xx q \rangle \\
&= \langle \left( B_h A_h^{-1} B_h^T + S_h \right) \xx q, \xx q \rangle \qquad
\forall \xx q \in \R^{N_p}, \xx q \neq 1
\end{align*}
and hence
$$
\beta_h^2 = \min_{\substack{\xx q \in \R^{N_p} \\ \xx q \neq \xx 1}} \frac{
\langle \left( B_h A_h^{-1} B_h^T + S_h \right) \xx q, \xx q \rangle}
{\langle M \xx q , \xx q \rangle} = \min_{\substack{\xx q \in \R^{N_p} \\ \xx q
\neq \xx 1}} \frac{ \langle G_h \xx q , \xx q \rangle}{\langle T_h \xx q ,
\xx q \rangle}.
$$
\item If $\eta = 0$ (no pressure stabilization), the discrete \textit{inf-sup} constant satisfies
\begin{align*}
\begin{split}
\beta_h &= \min_{\substack{\xx q \in \R^{N_p} \\ \xx q \neq \xx 1}}
\max_{\substack{ \xx v \in \R^{N_{\xx u}} \\ \xx v \neq \xx 0}} \frac{\abs{
\langle \xx q , B_h \xx v \rangle}} { \langle A_h \xx v , \xx v \rangle^{1/2}
\langle M \xx q , \xx q \rangle^{1/2}}\\
&= \min_{\substack{\xx q \in \R^{N_p} \\ \xx q \neq \xx 1}} \frac{1}{\langle M \xx q , \xx q \rangle^{1/2}} \max_{\substack{ \xx w \in \R^{N_{\xx u}} \\ \xx w = A_h^{1/2} \xx v \neq \xx 0}} \frac{\abs{ \langle \xx q , B_h A_h^{-1/2} \xx w \rangle} }{ \langle \xx w , \xx w \rangle^{1/2} }\\
&= \min_{\substack{\xx q \in \R^{N_p} \\ \xx q \neq \xx 1}} \frac{1}{\langle M \xx q , \xx q \rangle^{1/2}} \max_{\substack{ \xx w \in \R^{N_{\xx u}} \\ \xx w \neq \xx 0}} \frac{\abs{ \langle A_h^{-1/2} B_h^T \xx q , \xx w \rangle} }{ \langle \xx w , \xx w \rangle^{1/2} }.
\end{split}
\end{align*}
By noting that the maximum is realized for $\xx w = A_h^{-1/2} B_h^T \xx q$, we have
\begin{align*}
\begin{split}
\beta_h^2 &=  \min_{\substack{\xx q \in \R^{N_p} \\ \xx q \neq \xx 1}} \frac{
\langle A_h^{-1/2} B_h^T \xx q , A_h^{-1/2} B_h^T \xx q \rangle}{\langle M \xx q
, \xx q \rangle} = \min_{\substack{\xx q \in \R^{N_p} \\
\xx q \neq \xx 1}} \frac{ \langle B_h A_h^{-1} B_h^T \xx q , \xx q
\rangle}{\langle M \xx q , \xx q \rangle} \\
& = \min_{\substack{\xx q \in \R^{N_p} \\ \xx q \neq \xx 1}} \frac{ \langle G_h
\xx q , \xx q \rangle}{\langle T_h \xx q , \xx q \rangle}.
\end{split}
\end{align*}
\end{itemize}
By solving the discrete eigenvalue problem in equation~\eqref{eq:GEP}, we have that
$$
\beta_h \approx \min_{\lambda_i > 0} \sqrt{ \lambda_i }.
$$
To estimate numerically $\beta_h$, we consider the Stokes problem on the unit square
domain $\Omega = (0,1)^2$. We computed
$\beta_h$ on several sequences of meshes, namely triangular, regular, distorted,
and agglomerated polygonal meshes; see Figure~\ref{fig:meshes} for an
illustrative example of the considered grids.
The regular polygonal meshes have been generated via \texttt{PolyMesher}
\cite{talischi2012polymesher}, while the distorted polygonal ones are generated
starting from a regular grid and  randomly  adding grid nodes on the edges to obtain elements with a large number of possibly degenerating edges.
The resulting elements may be non-convex.
The sequence of agglomerated polygonal meshes are generated by agglomerating elements starting from an initial Voronoi tessellation; see the last row of Figure~\ref{fig:meshes}.
To solve the generalized eigenvalue problem~\eqref{eq:GEP}, we employ the \texttt{eigs} command of Matlab.\\

We first investigate the behavior of~$\beta_h$ for fixed polynomial approximation orders for the velocity and the pressure and varying the mesh size.
In Figures~\ref{fig:infsup_h_all}  and~\ref{fig:infsup_h2_all}, we report the computed values of~$\beta_h$ as a function of the mesh size $h$ for different mesh configurations
and different choices of the discrete velocity and pressure spaces $\mathcal P^{m+k} - \mathcal P^m$, $k=0,1,2,3,4$.
In the stabilized cases $\eta = 1$, the constant $\beta_h$ is uniformly bounded from~$0$ independently of the mesh size.
This is in agreement with the result shown in Proposition~\ref{prop:gen_inf_sup}.
Furthermore, as predicted in Proposition~\ref{prop:gen_inf_sup}, $\beta_h$ depends on~$m$ for all the considered mesh configurations, at least when~$m=\ell$.
From the numerical computations obtained in the no pressure stabilization cases $\eta=0$ with~$k=1,2,3,4$, we draw the following conclusions:
\emph{(i)} $\beta_h$ is independent of~$h$ for all the considered mesh configurations except for agglomerated meshes, where we can detect a mild dependence;
\emph{(ii)} the dependence of~$\beta_h$ on the velocity and pressure polynomial approximation degrees is stronger than in the stabilized case.

\begin{figure}[!htbp]
\centering
\begin{subfigure}{1.0\textwidth}
\centering
\includegraphics[width=0.343\textwidth]{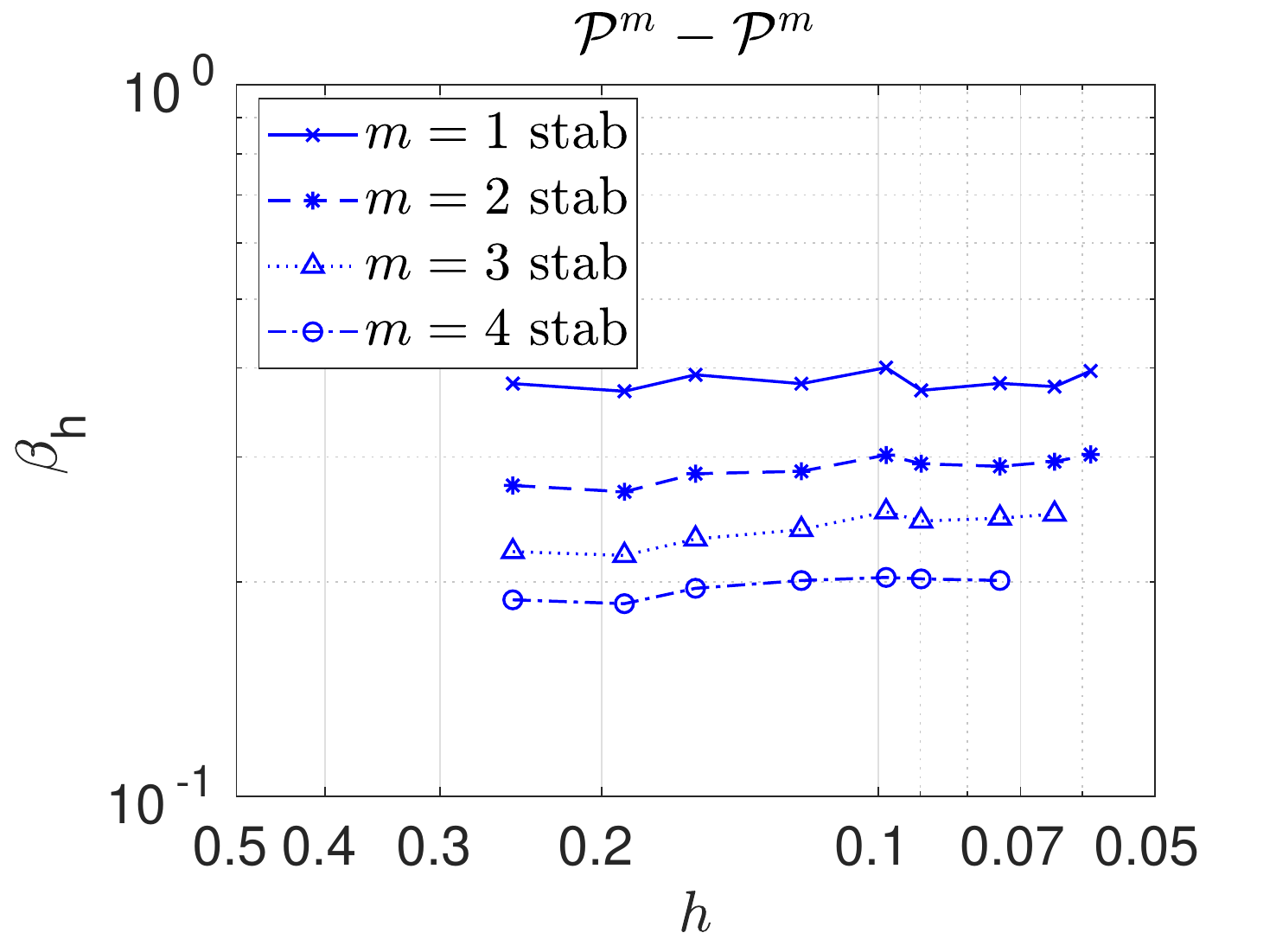}
\hspace{-0.48cm}
\includegraphics[width=0.343\textwidth]{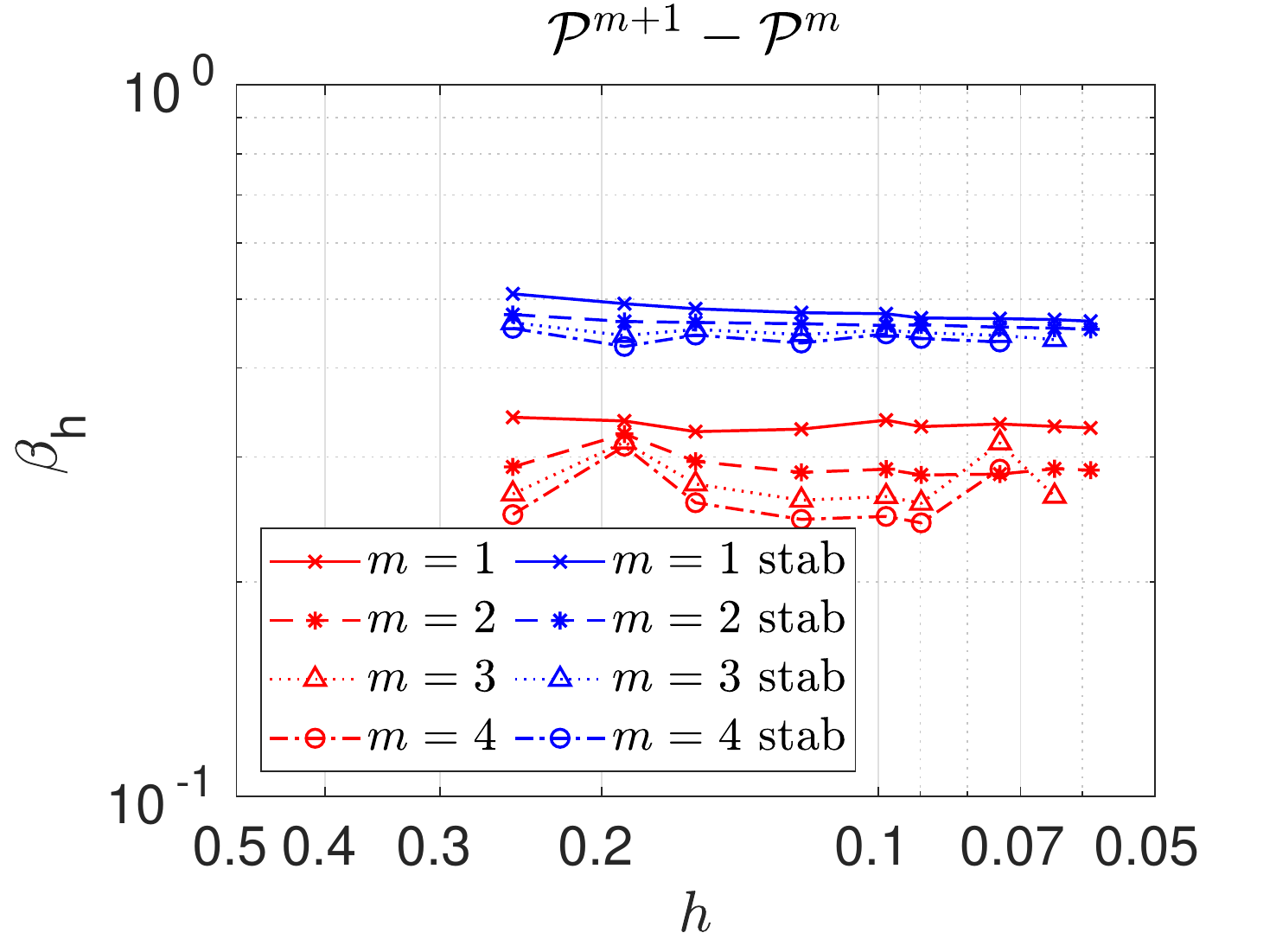}
\hspace{-0.48cm}
\includegraphics[width=0.343\textwidth]{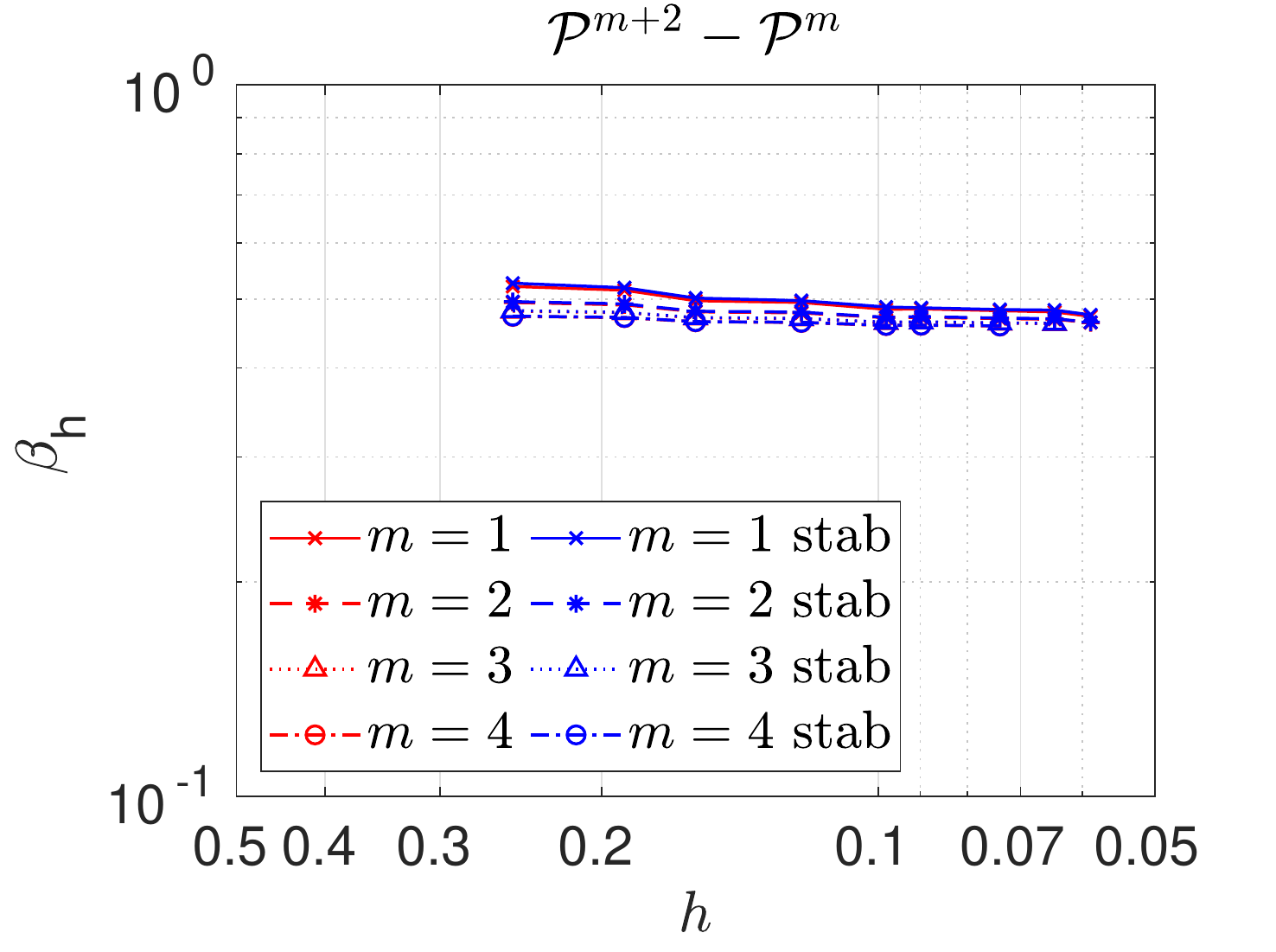}
\caption{Triangular grids}
\label{fig:infsup_h_tria}
\end{subfigure}
\begin{subfigure}{1.0\textwidth}
\centering
\includegraphics[width=0.343\textwidth]{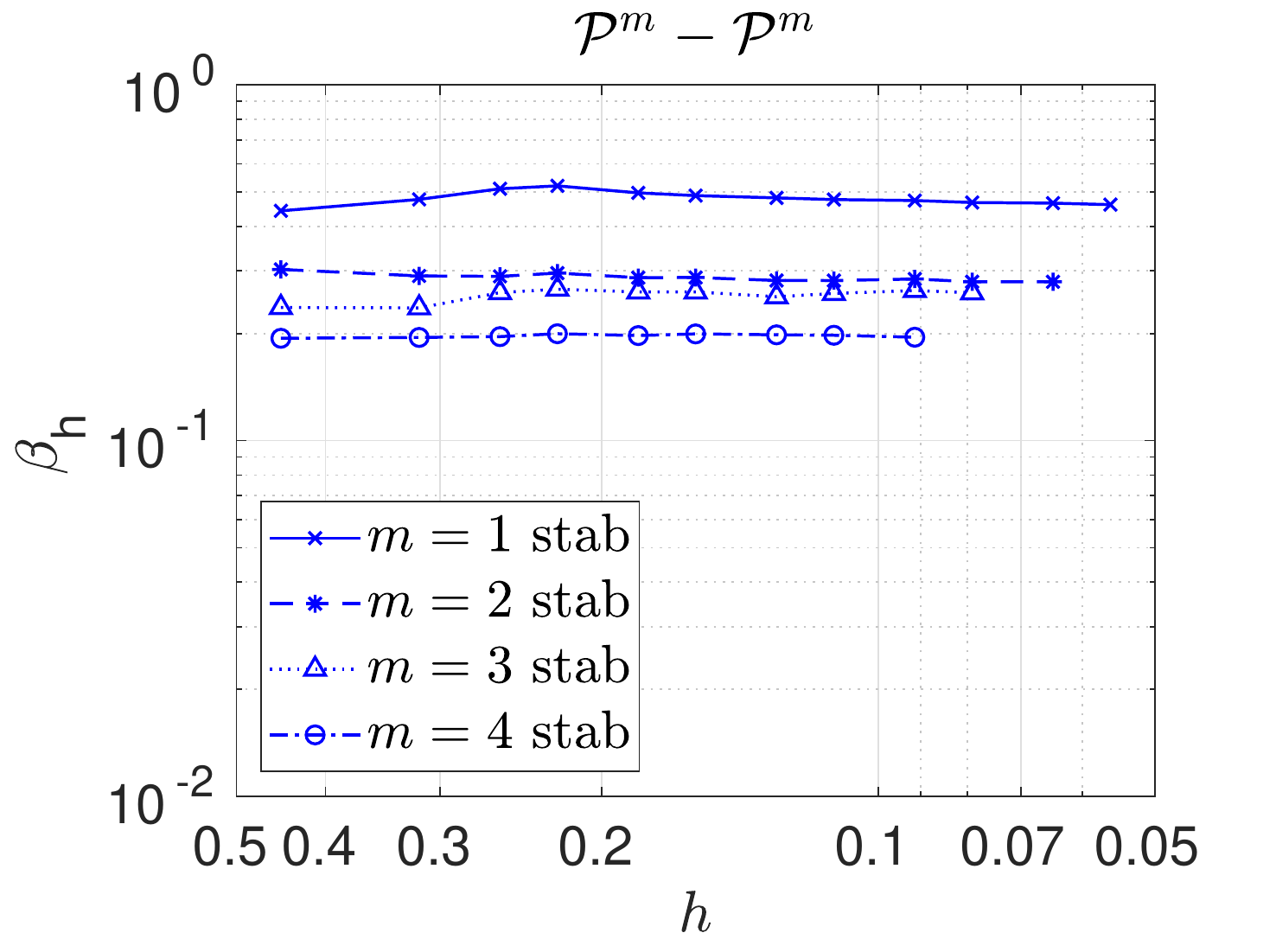}
\hspace{-0.48cm}
\includegraphics[width=0.343\textwidth]{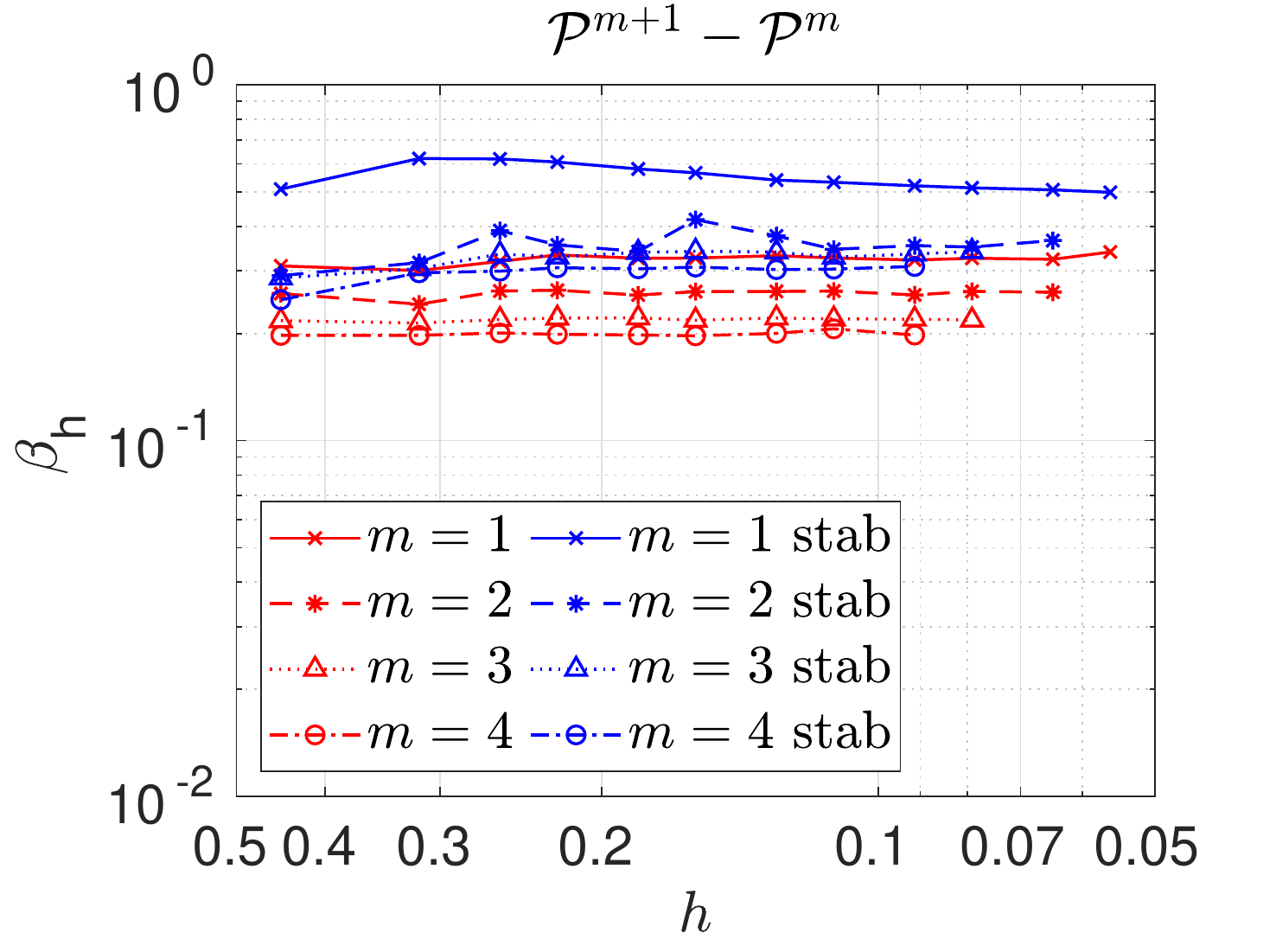}
\hspace{-0.48cm}
\includegraphics[width=0.343\textwidth]{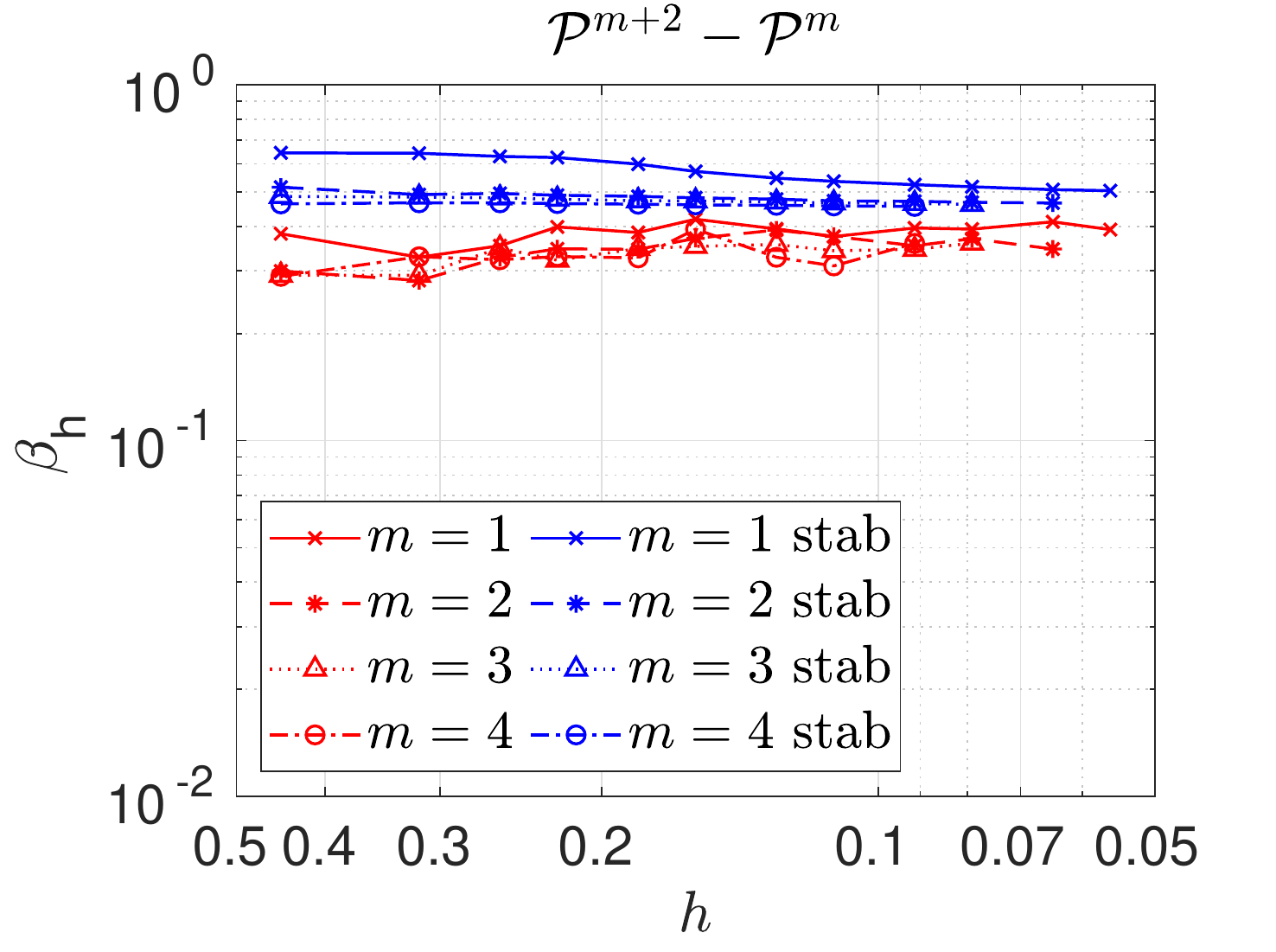}
\caption{Regular polygonal meshes}
\label{fig:infsup_h_polyreg}
\end{subfigure}
\begin{subfigure}{1.0\textwidth}
\centering
\includegraphics[width=0.343\textwidth]{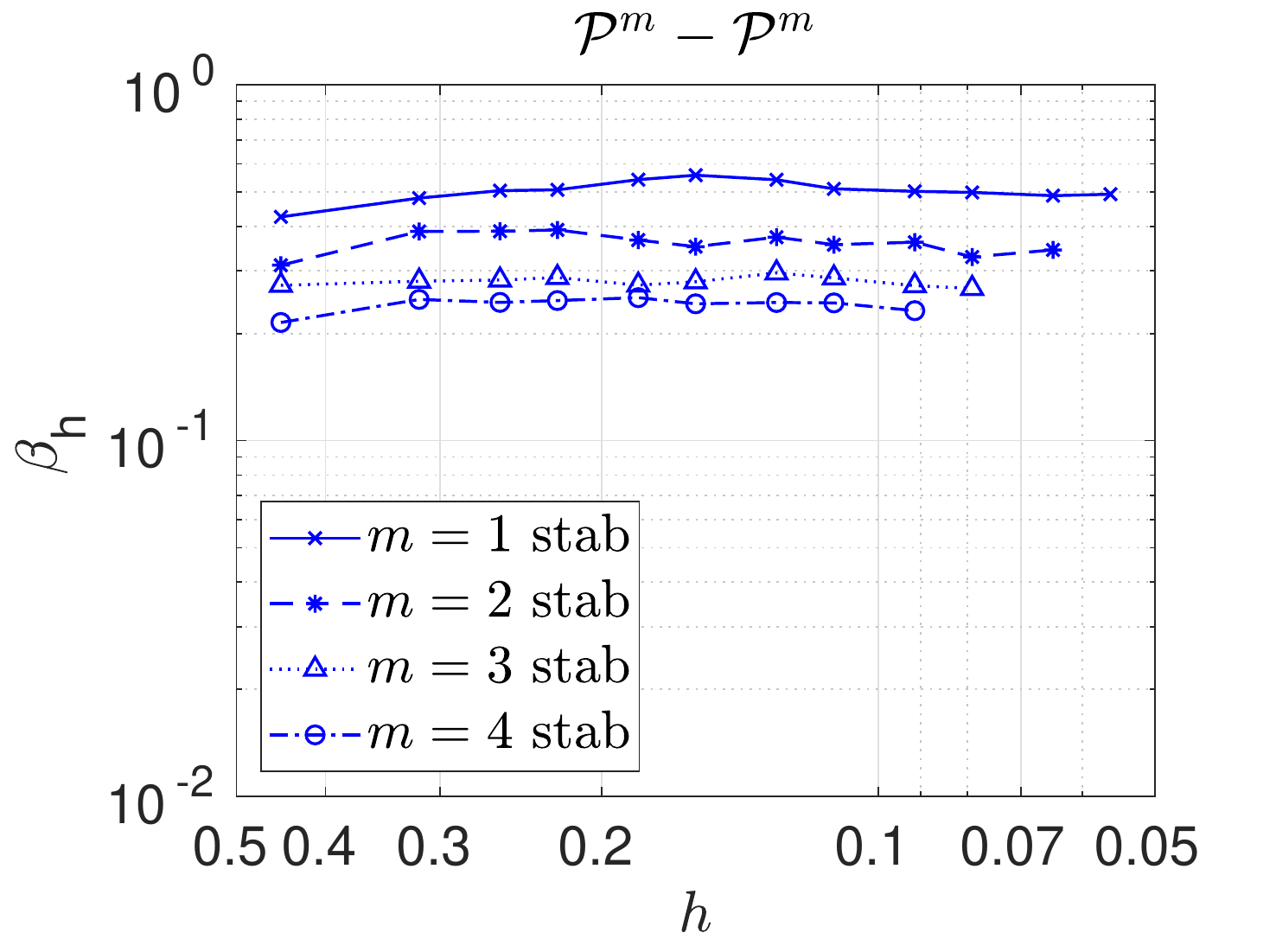}
\hspace{-0.48cm}
\includegraphics[width=0.343\textwidth]{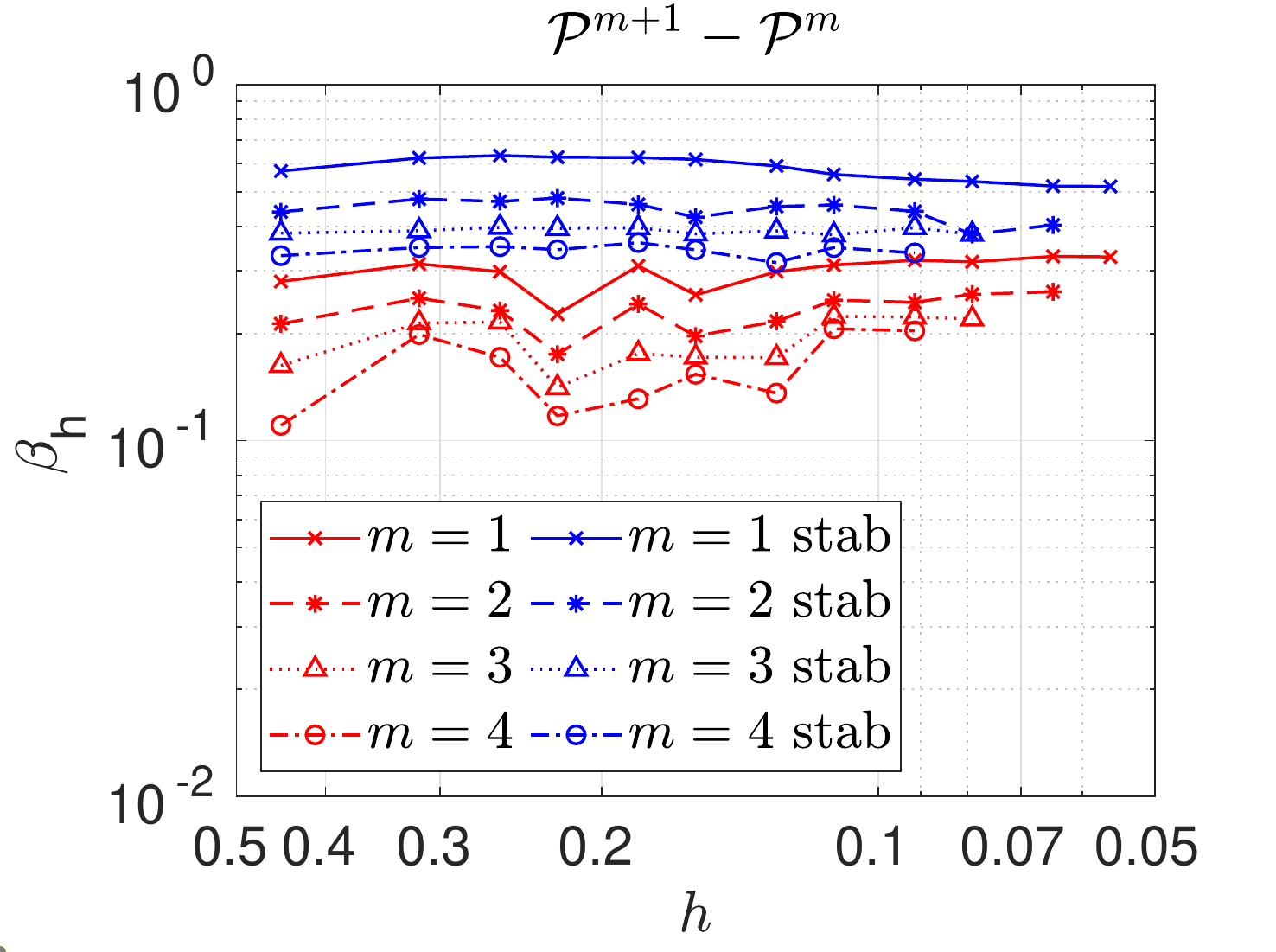}
\hspace{-0.48cm}
\includegraphics[width=0.343\textwidth]{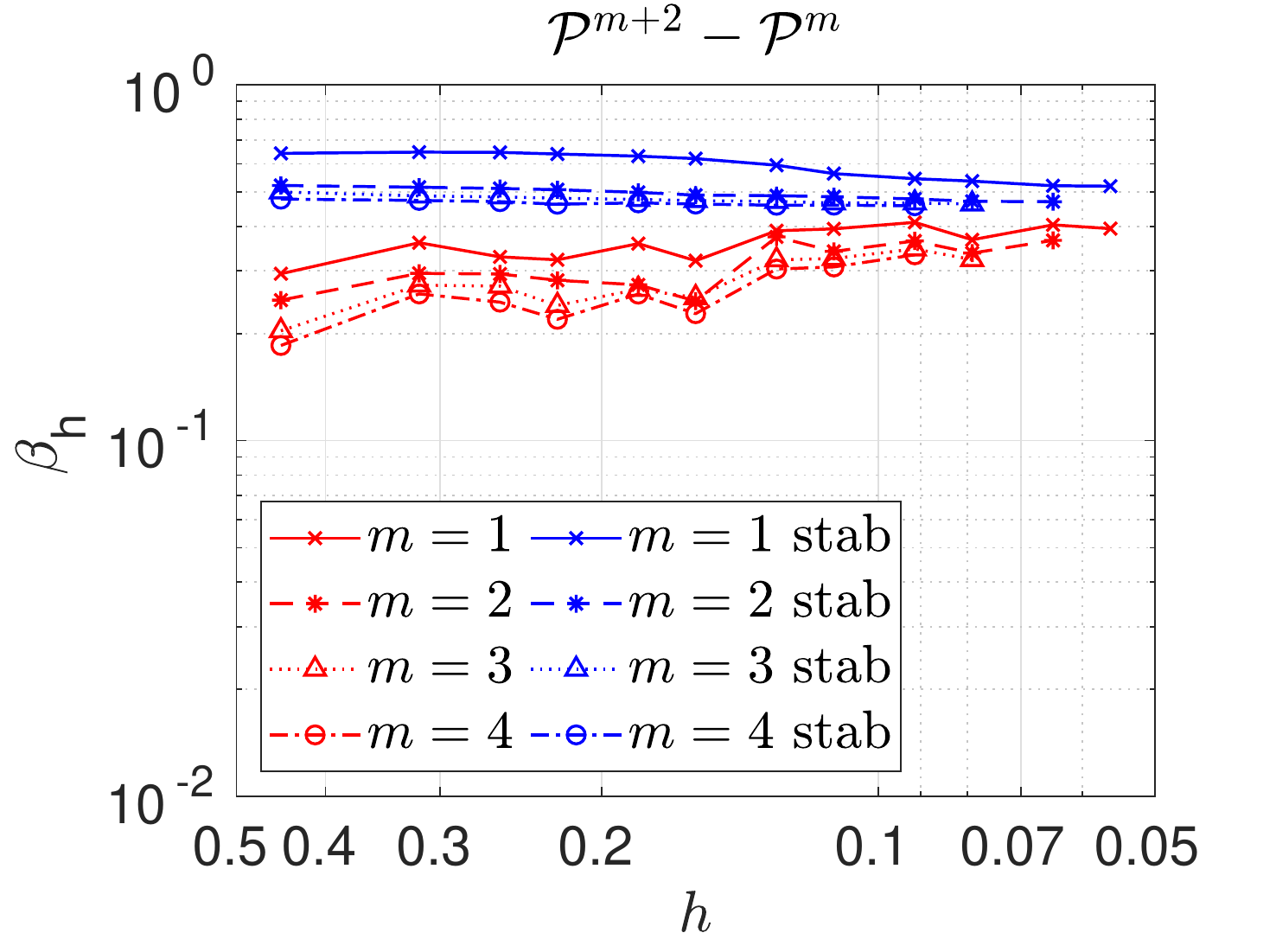}
\caption{Distorted polygonal meshes}
\label{fig:infsup_h_polybad}
\end{subfigure}
\begin{subfigure}{1.0\textwidth}
\centering
\includegraphics[width=0.343\textwidth]{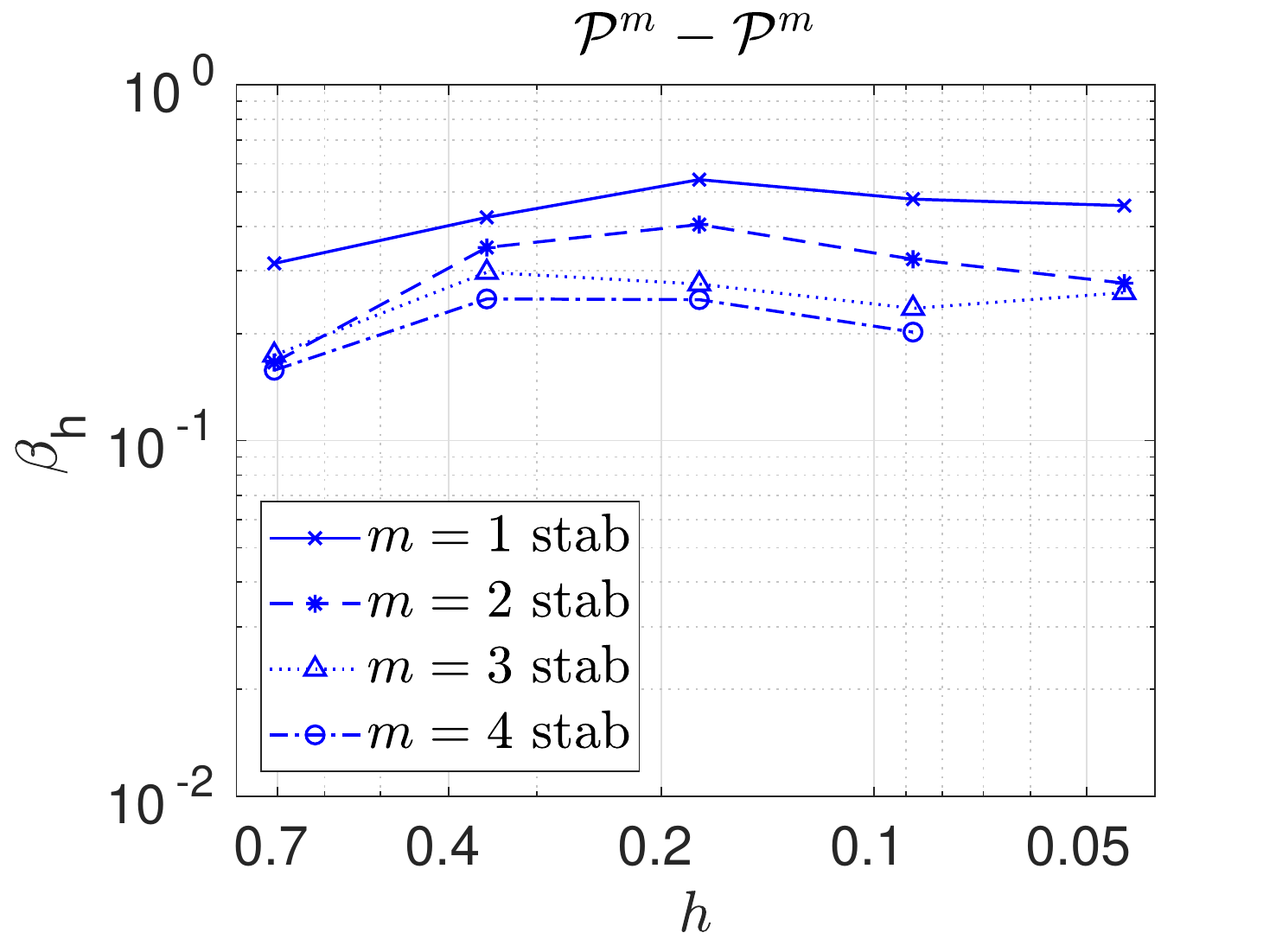}
\hspace{-0.48cm}
\includegraphics[width=0.343\textwidth]{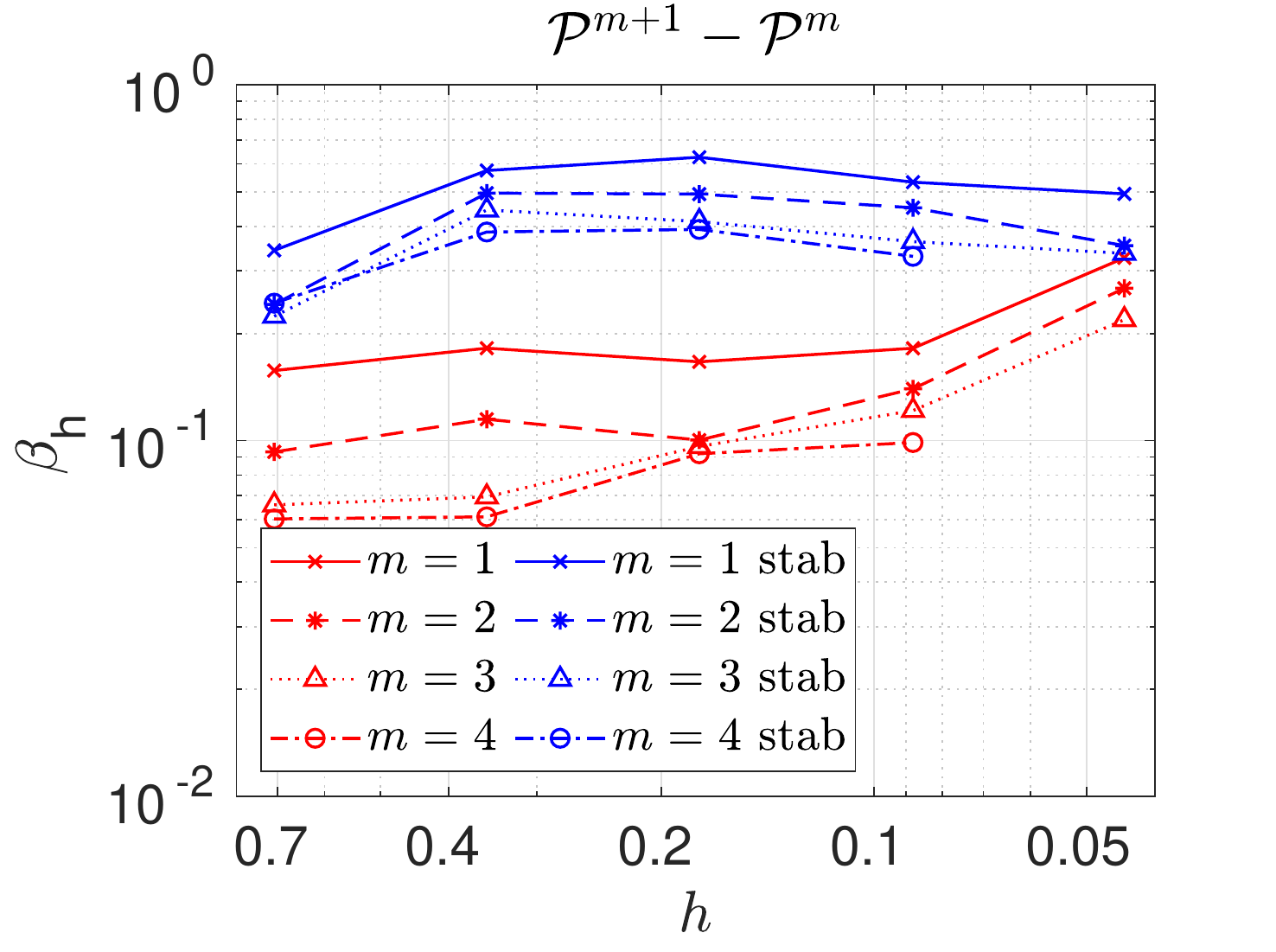}
\hspace{-0.48cm}
\includegraphics[width=0.343\textwidth]{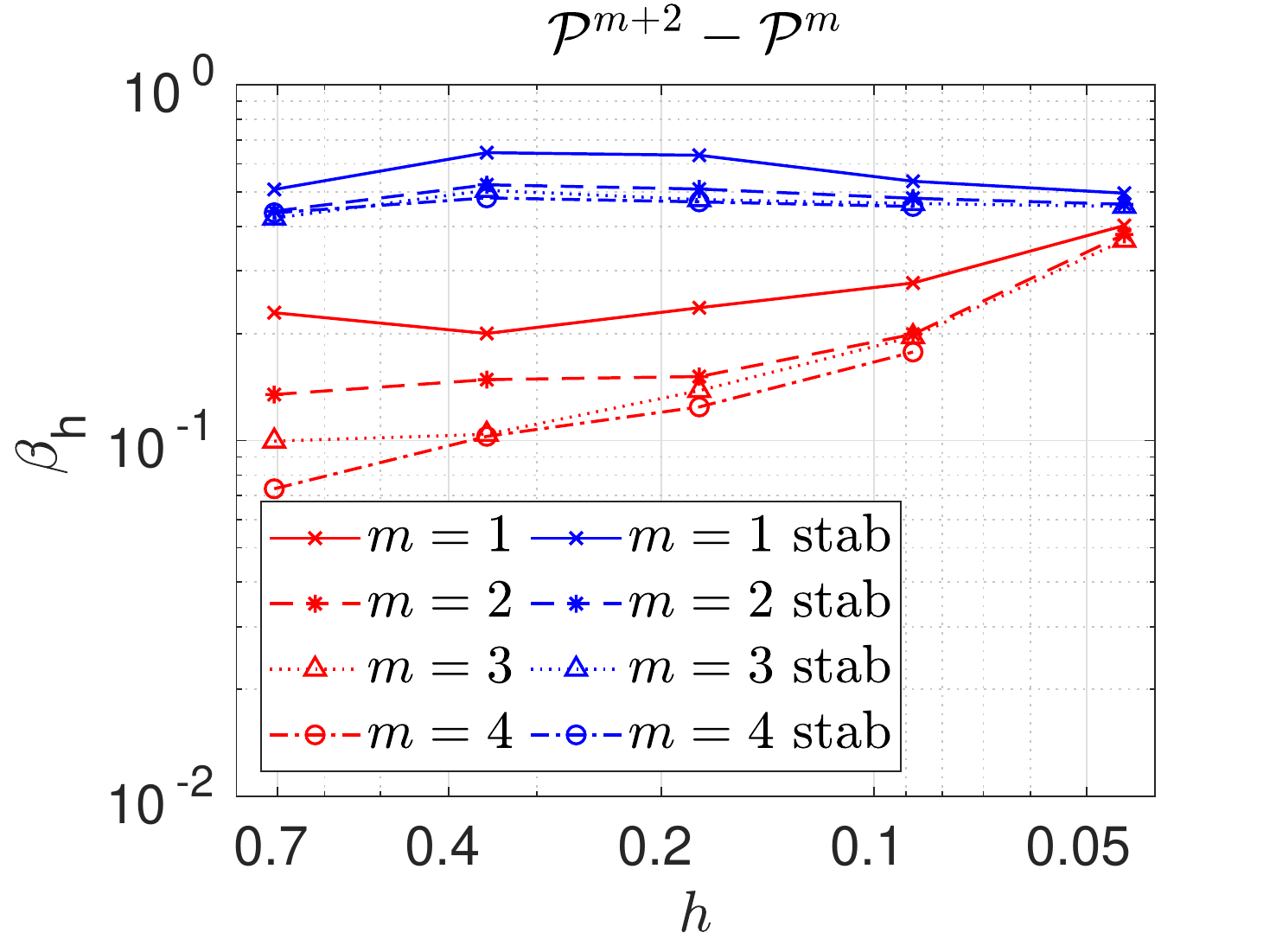}
\caption{Agglomerated polygonal meshes}
\label{fig:infsup_h_polyMGG}
\end{subfigure}
\caption{Values of~$\beta_h$ as function of the mesh size~$h$ for different
choices of the polynomial degree for the discrete velocity and pressure spaces
$\mathcal P^{m+k} - \mathcal P^m$, computed solving the generalized eigenvalue
problem~\eqref{eq:GEP}.  From left to right: $\mathcal P^{m} - \mathcal P^m$,
$\mathcal P^{m+1} - \mathcal P^m$, $\mathcal P^{m+2} - \mathcal P^m$.  The
parameter $\eta=1$ for $k=0,1,2$ (blue lines), and  $\eta=0$ for $k=1,2$ (red
lines).}
\label{fig:infsup_h_all}
\end{figure}

\begin{figure}[!htbp]
\centering
\begin{subfigure}{1.0\textwidth}
\centering
\includegraphics[width=0.343\textwidth]{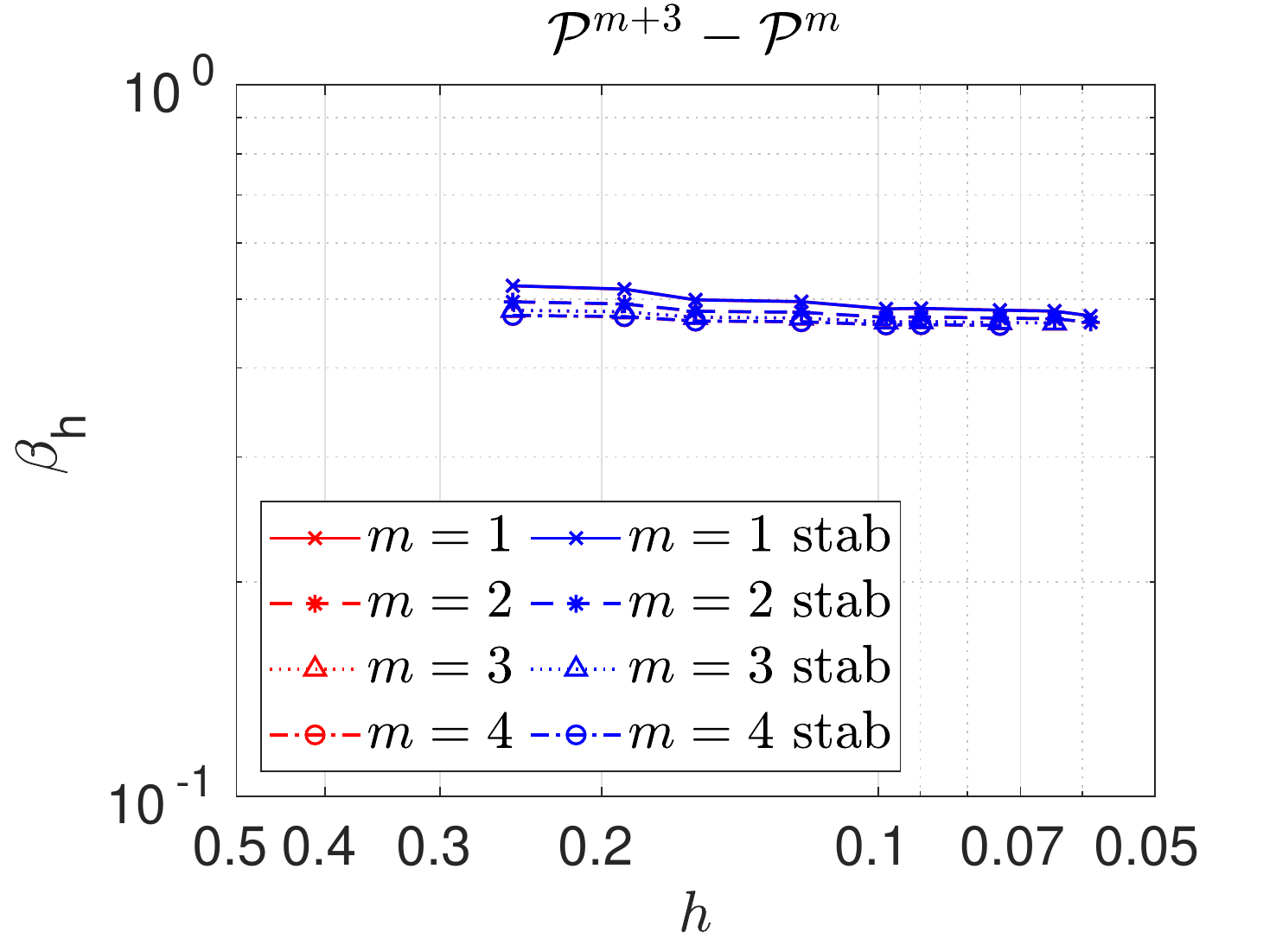}
\includegraphics[width=0.343\textwidth]{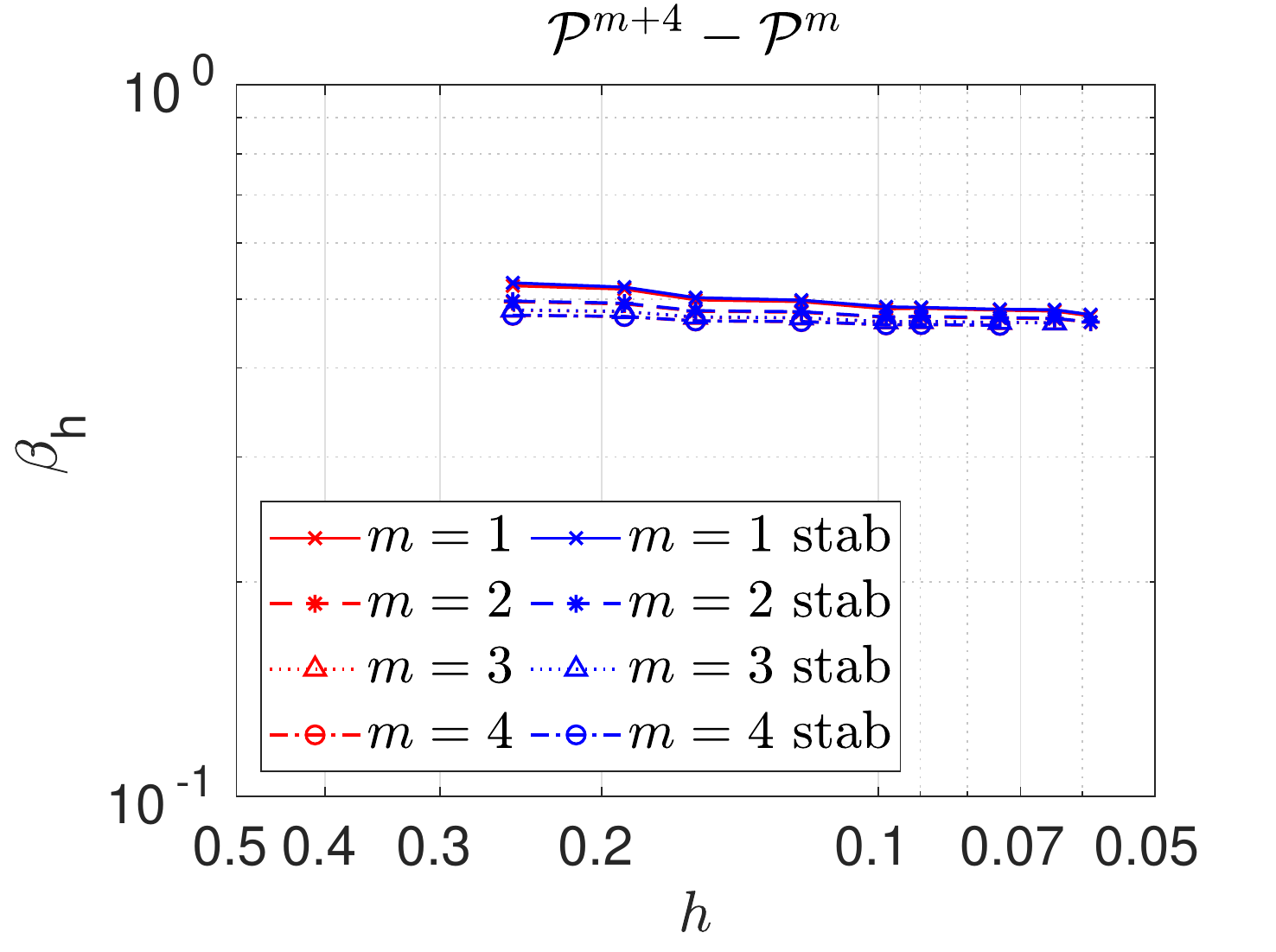}
\caption{Triangular grids}
\label{fig:infsup_h2_tria}
\end{subfigure}
\begin{subfigure}{1.0\textwidth}
\centering
\includegraphics[width=0.343\textwidth]{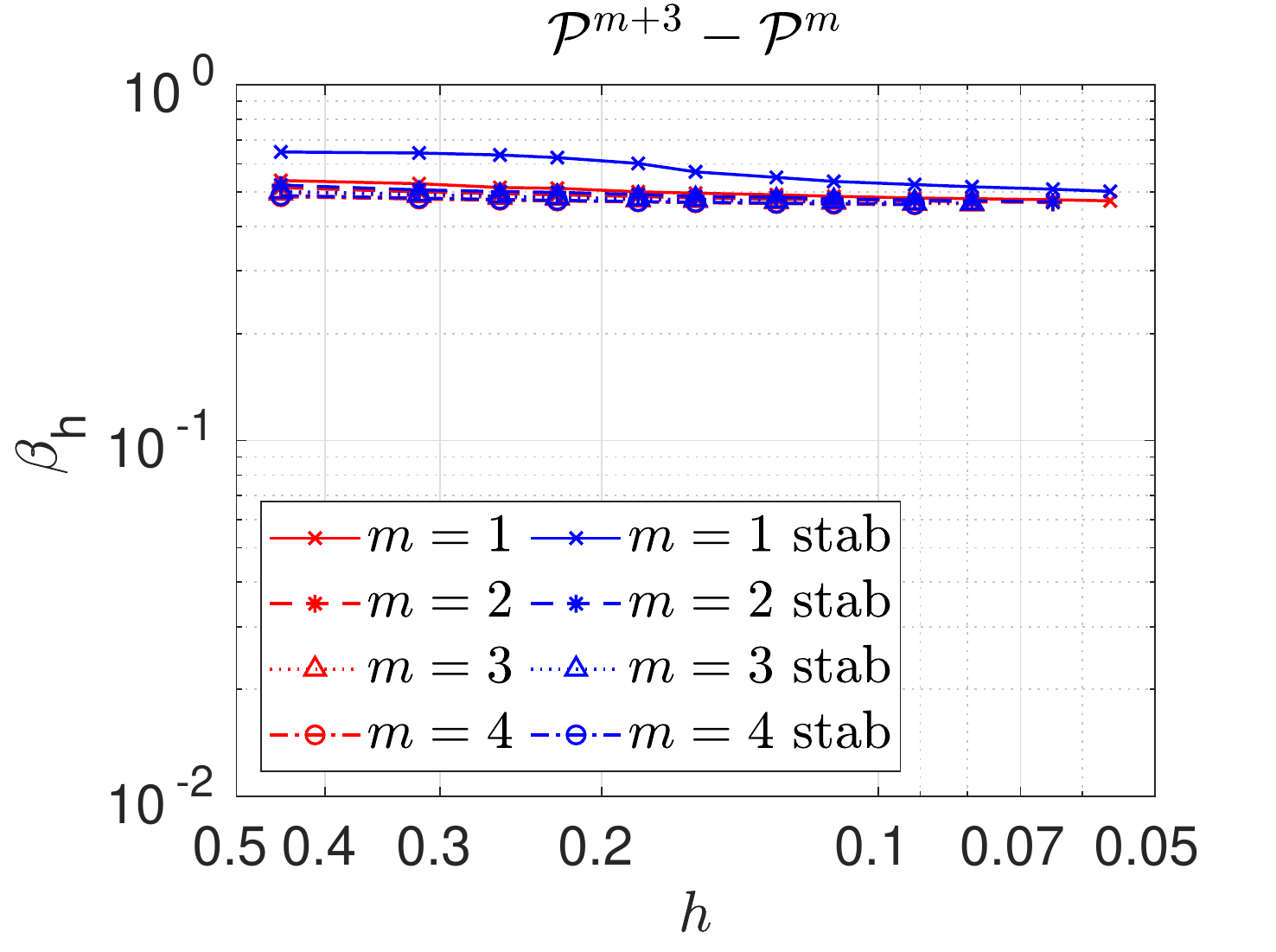}
\includegraphics[width=0.343\textwidth]{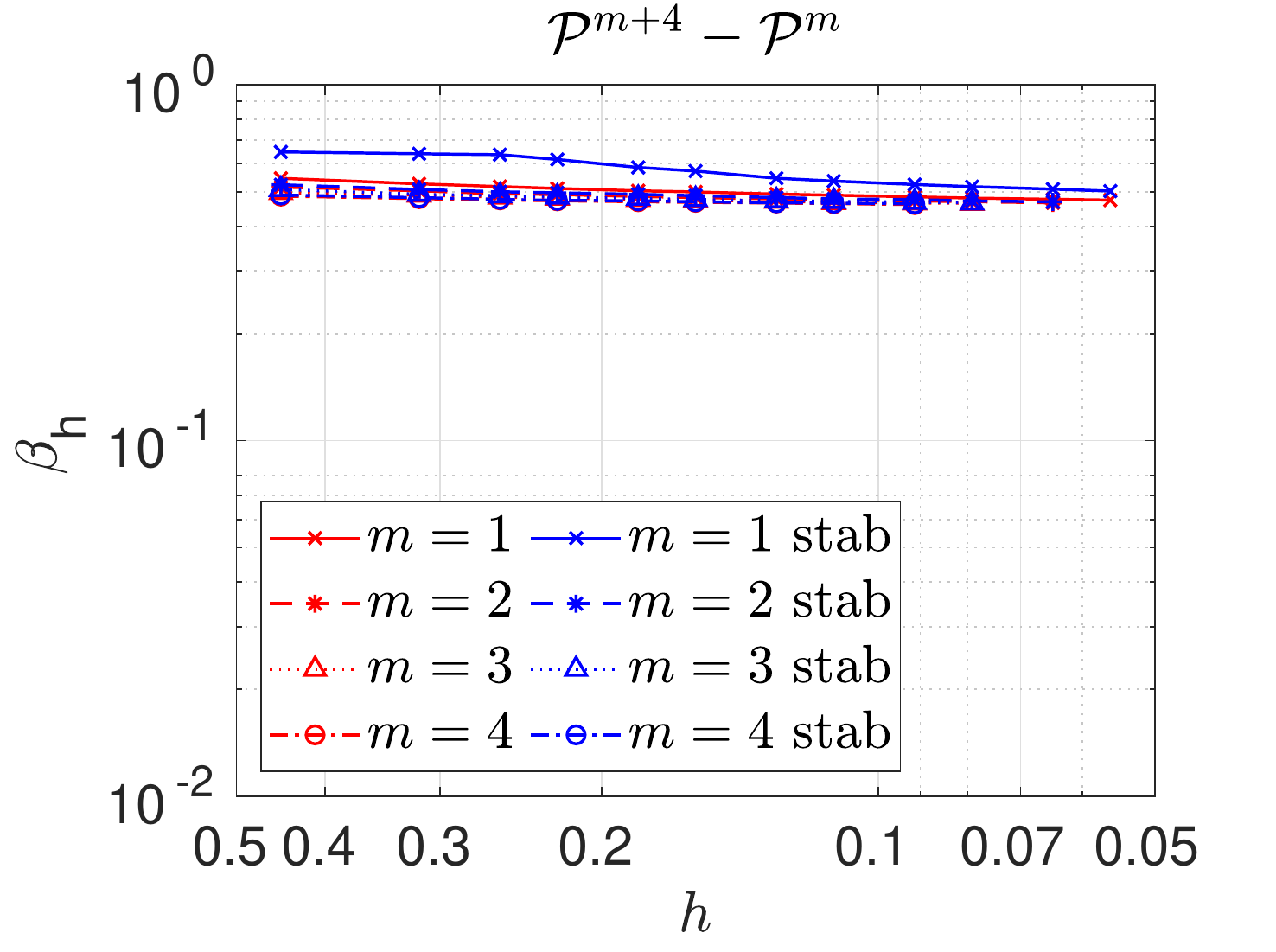}
\caption{Regular polygonal meshes}
\label{fig:infsup_h2_polyreg}
\end{subfigure}
\begin{subfigure}{1.0\textwidth}
\centering
\includegraphics[width=0.343\textwidth]{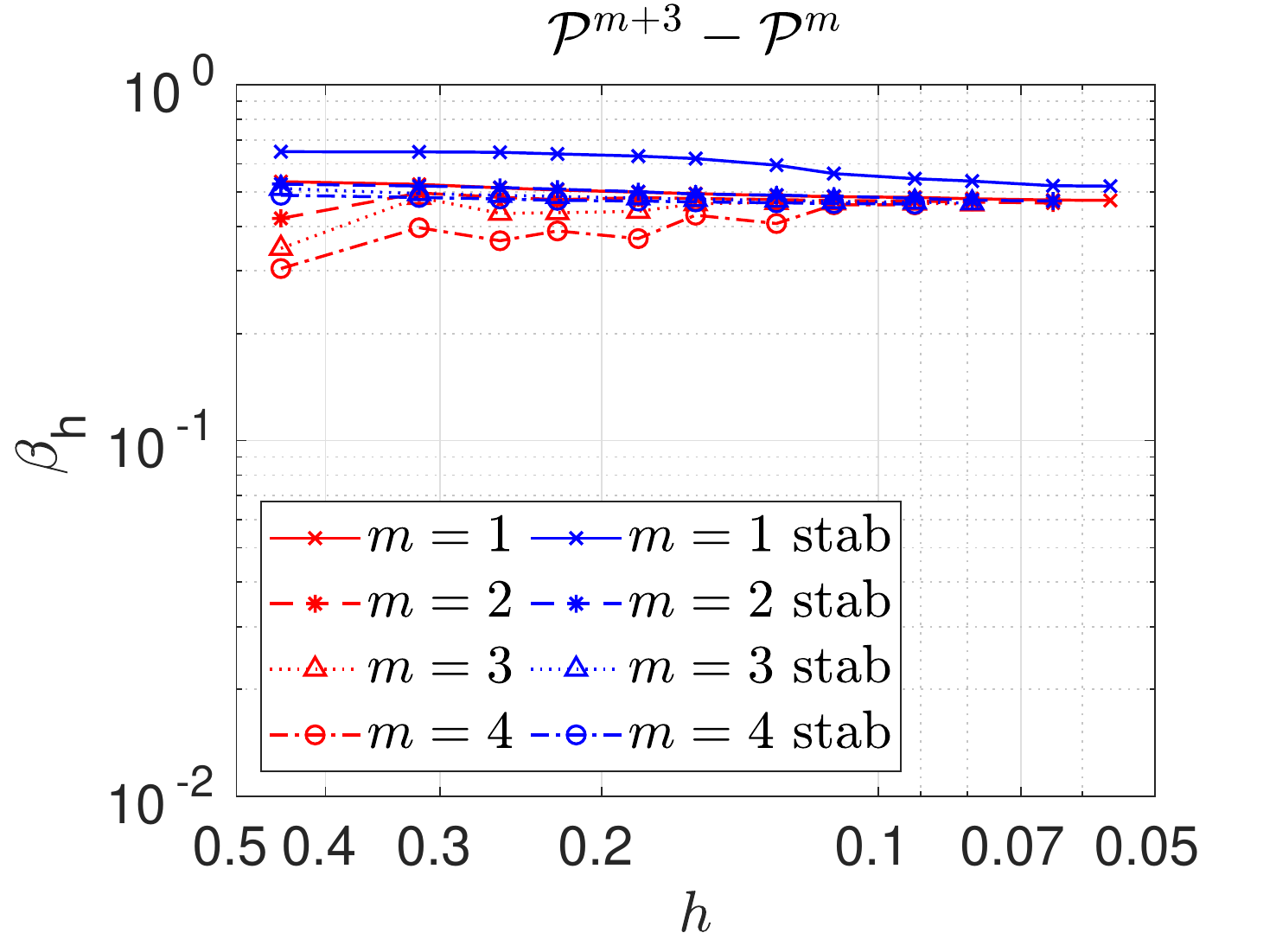}
\includegraphics[width=0.343\textwidth]{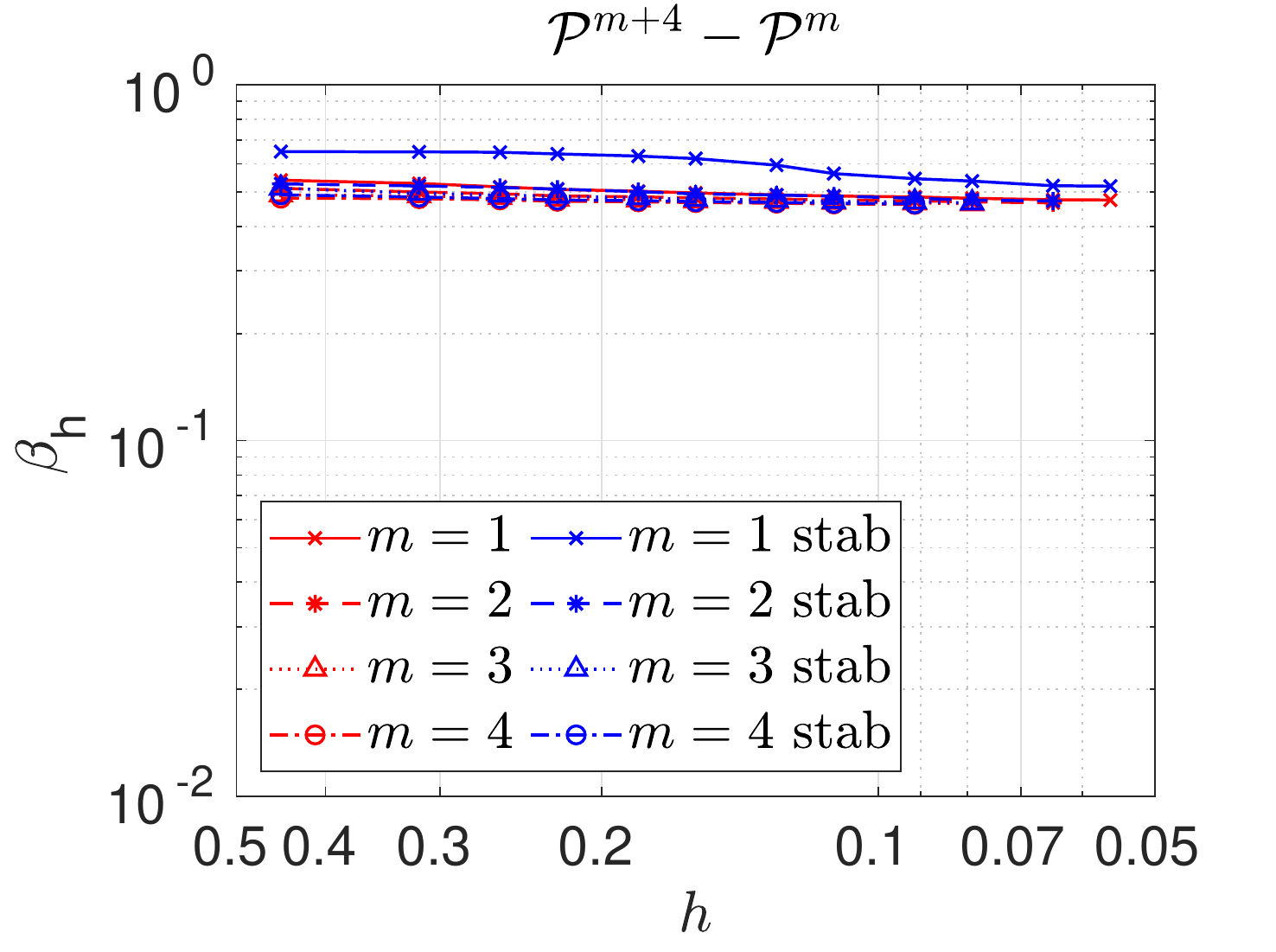}
\caption{Distorted polygonal meshes}
\label{fig:infsup_h2_polybad}
\end{subfigure}
\begin{subfigure}{1.0\textwidth}
\centering
\includegraphics[width=0.343\textwidth]{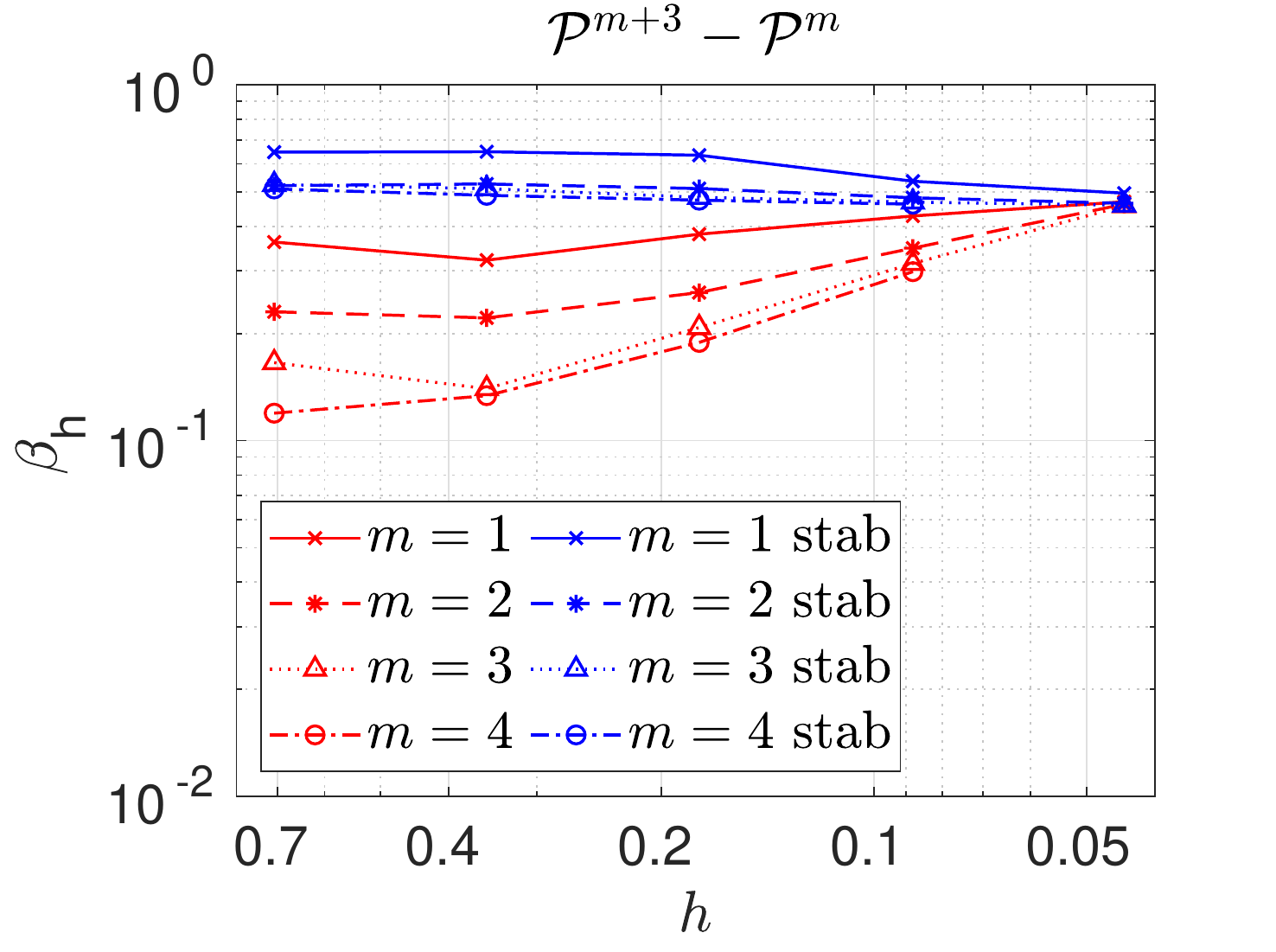}
\includegraphics[width=0.343\textwidth]{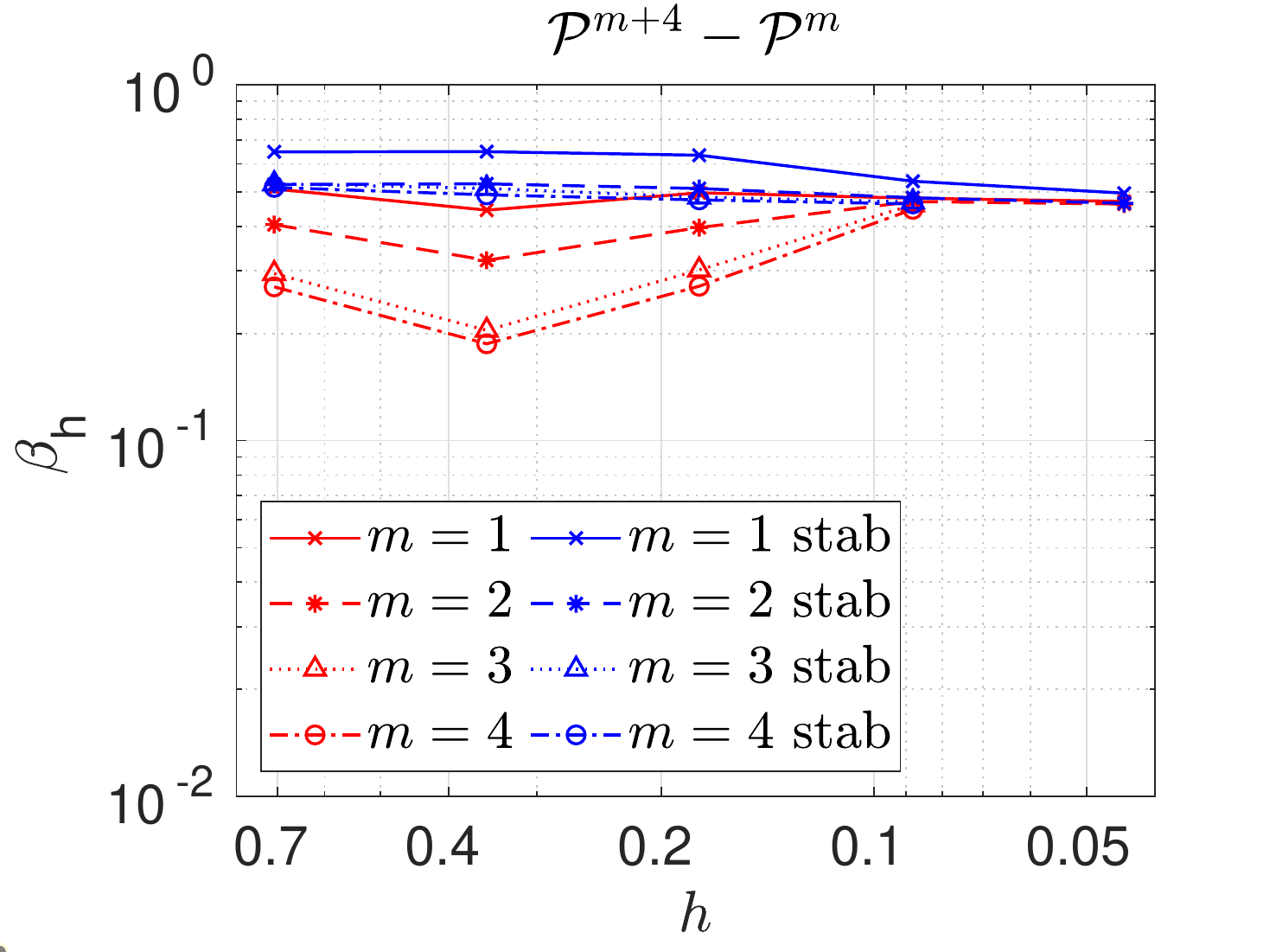}
\caption{Agglomerated polygonal meshes}
\label{fig:infsup_h2_polyMGG}
\end{subfigure}
\caption{Values of~$\beta_h$ as function of the mesh size~$h$ for different
choices of the polynomial degree for the discrete velocity and pressure spaces
$\mathcal P^{m+k} - \mathcal P^m$, computed solving the generalized eigenvalue
problem~\eqref{eq:GEP}.  From left to right: $\mathcal P^{m+3} - \mathcal P^m$,
$\mathcal P^{m+4} - \mathcal P^m$.  The parameter $\eta =1$ (blue lines) and
$\eta=0$ (red lines).}
\label{fig:infsup_h2_all}
\end{figure}

Next, we investigate the behavior of~$\beta_h$ by varying the polynomial approximation orders for the velocity and the pressure spaces, and fixing the computational mesh.
In Figure \ref{fig:infsup_p_tria_all}, we report the computed value of $\beta_h$  as a function of the polynomial approximation degree~$m$ for different choices of
the velocity and pressure spaces  $\mathcal P^{m+k} - \mathcal P^m$, $k=0,1,2,3,4$.
We set the parameter $\eta=1$ for $k=0,1,2$, and~$\eta=0$ for $k=1,2,3,4$.
When~$\eta=1$, we obtain the following results.
For $k=0$, i.e., $\ell=m$, the dependence of $\beta_h$ is in agreement with Proposition~\ref{prop:gen_inf_sup}: the constant~$\beta_h$ deteriorates as~$m$ grows.
Nevertheless, the estimate in Proposition~\ref{prop:gen_inf_sup} is slightly suboptimal by a factor of~$m^{-1/2}$, as our numerical computations suggest that $\beta_h= O(m^{-1/2})$ for all the considered mesh configurations.
For $k=1,2$, on triangular and regular polygonal meshes, $\beta_h$ looks independent of~$m$, while mildly depends on~$m$ on irregular and agglomerated polygonal meshes.
From the numerical computations obtained in the no pressure stabilization cases $\eta=0$, we draw the following conclusions:
\emph{(i)} on regular polygonal meshes, $\beta_h$ is independent of~$m$ for all the considered velocity-pressure pairs $\mathcal P^{m+k} - \mathcal P^m$, $k=1,2,3,4$;
\emph{(ii)} on irregular and agglomerated polygonal meshes, the behavior of $\beta_h$ is less clear and we detect a mild dependence on~$m$.
By comparing the cases $\mathcal P^{m+k} - \mathcal P^m$, $k=1,2$ with and without pressure stabilization,
at least for the agglomerated polygonal meshes, the dependence of $\beta_h$ on $m$ is milder for the case~$\eta=1$.

\begin{figure}[!htbp]
\centering
\begin{subfigure}{0.495\textwidth}
\centering
\includegraphics[width=\textwidth]{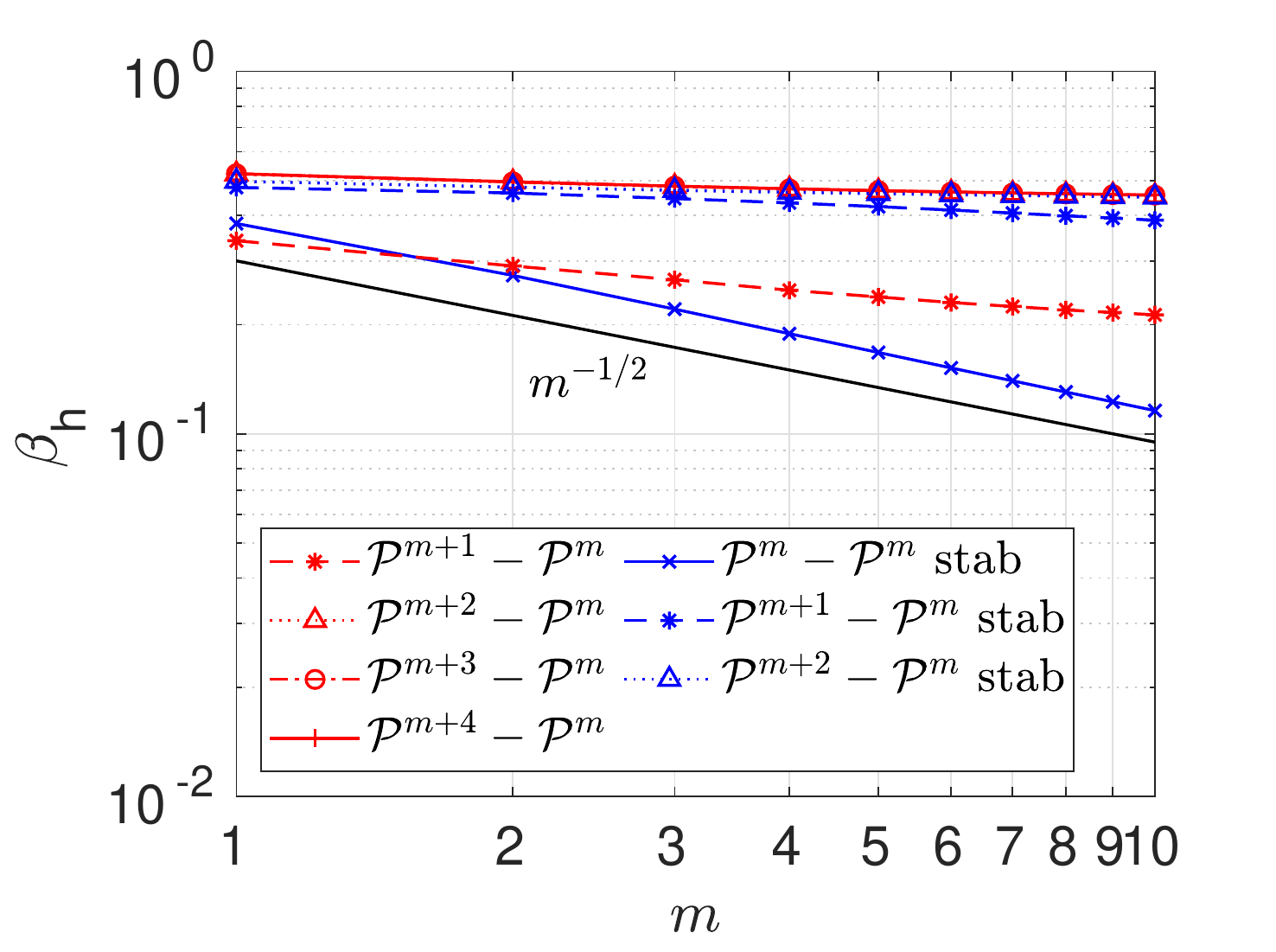}
\caption{Triangular grids ($N_{el} = 16$)}
\label{fig:infsup_p_tria}
\end{subfigure}
\begin{subfigure}{0.495\textwidth}
\centering
\includegraphics[width=\textwidth]{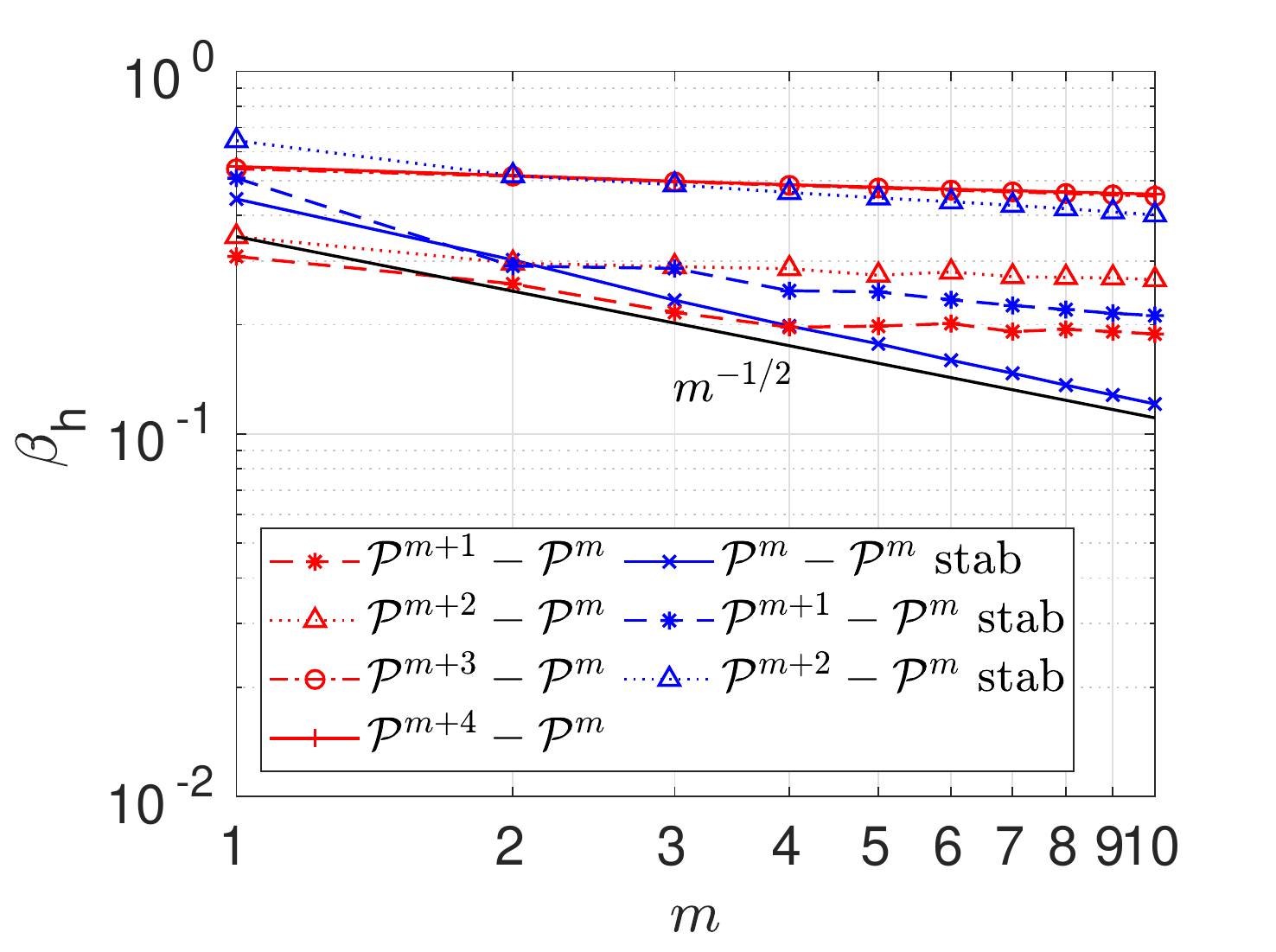}
\caption{Regular polygonal meshes ($N_{el} = 5$)}
\label{fig:infsup_p_polyreg}
\end{subfigure}
\begin{subfigure}{0.495\textwidth}
\centering
\includegraphics[width=\textwidth]{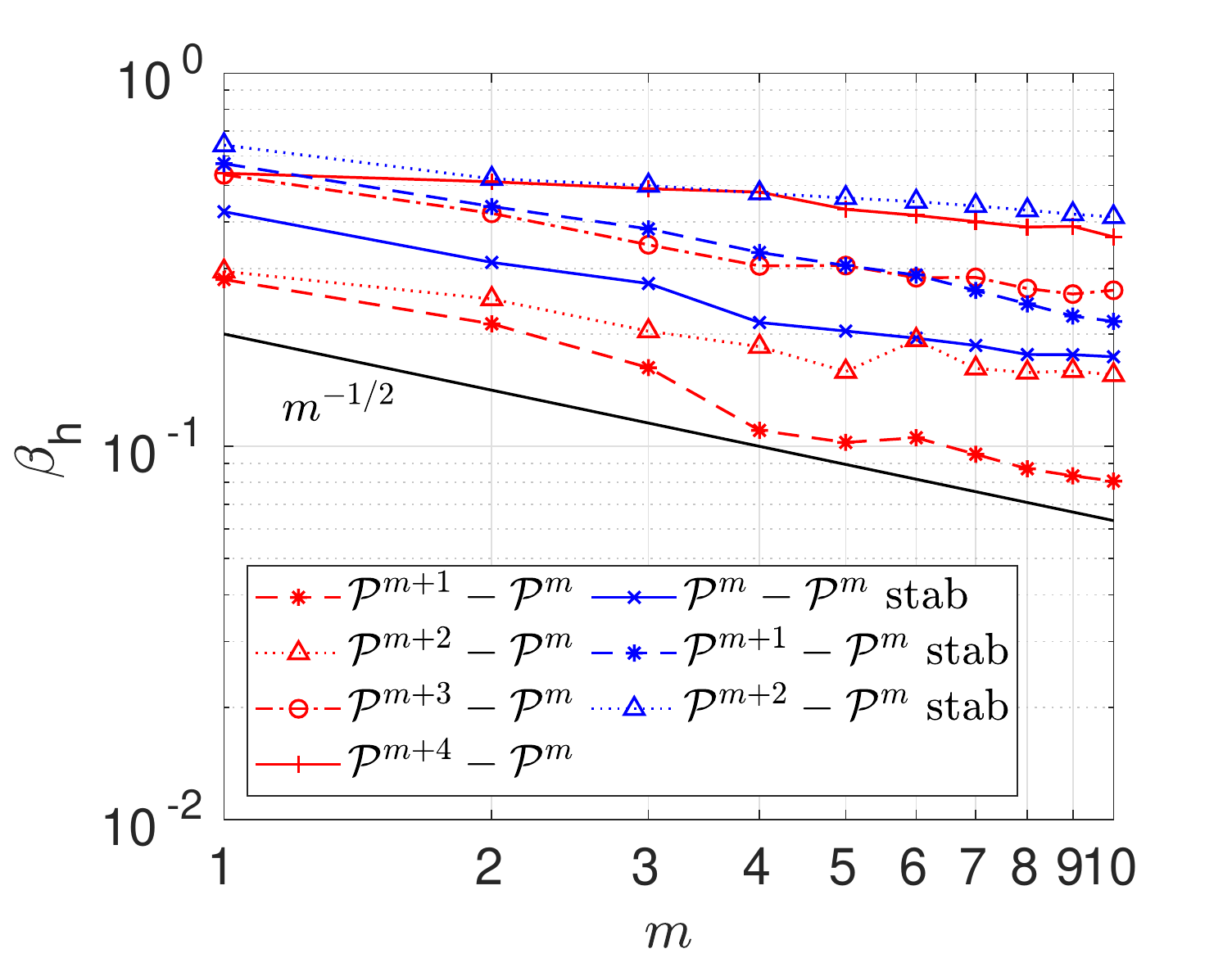}
\caption{Distorted polygonal meshes ($N_{el} = 5$)}
\label{fig:infsup_p_polybad}
\end{subfigure}
\begin{subfigure}{0.495\textwidth}
\centering
\includegraphics[width=\textwidth]{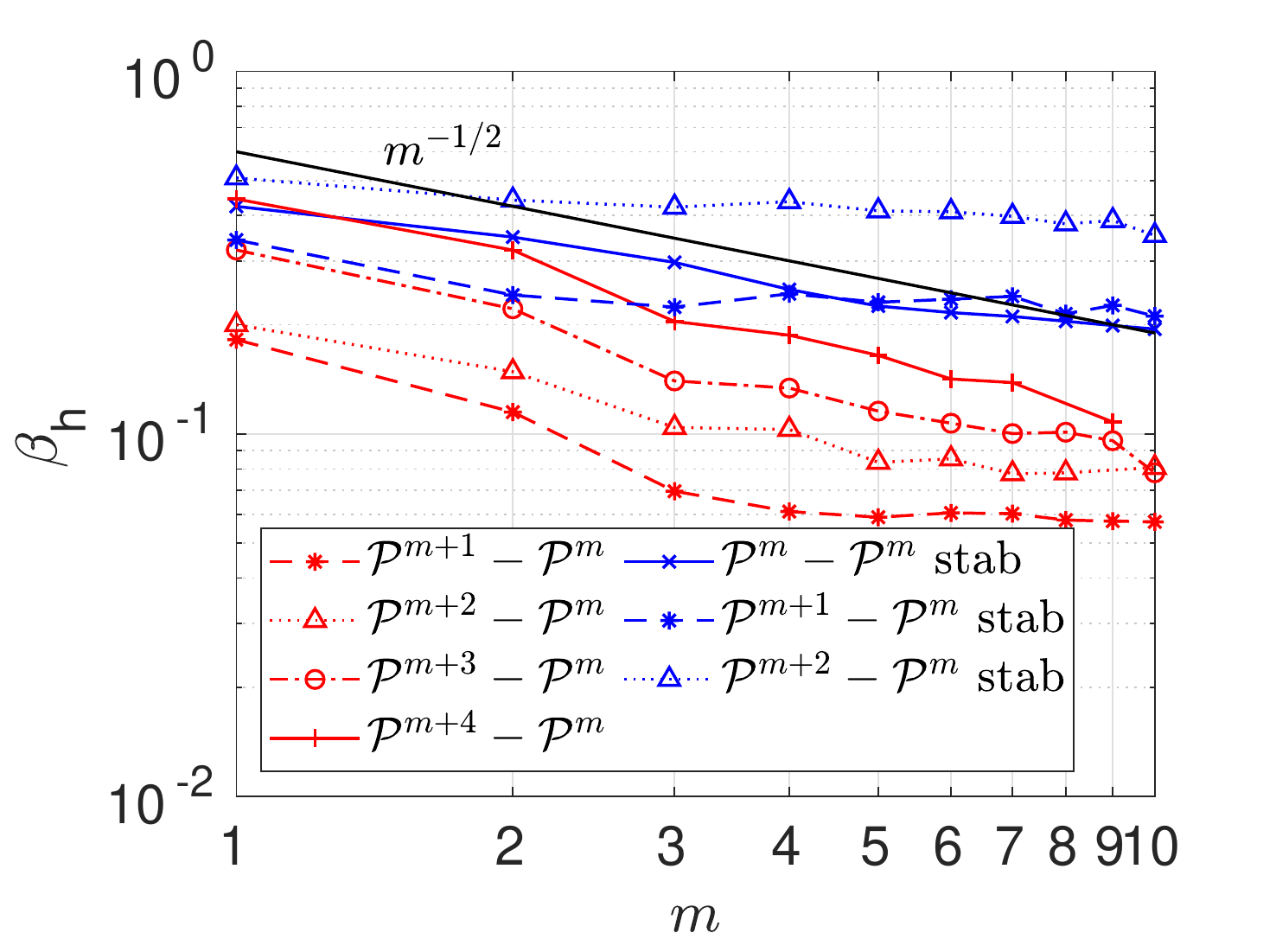}
\caption{Agglomerated polygonal meshes ($N_{el} = 8$)}
\label{fig:infsup_p_polyMGG}
\end{subfigure}
\caption{Values of~$\beta_h$ as function of the polynomial degree  for different
choices of the velocity and pressure spaces $\mathcal P^{m+k} - \mathcal P^m$,
$k=0,1,2,3,4$ computed solving the generalized eigenvalue
problem~\eqref{eq:GEP}.  The parameter $\eta=1$ if $k=0,1,2$ (blue lines), and
$\eta=0$ if $k=1,2,3,4$ (red lines).}
\label{fig:infsup_p_tria_all}
\end{figure}


Moreover, we study the behaviour of~$\beta_h$ in the case of degenerate edges, i.e., when the number of the edges of a polygon with fixed size increases and the size of the edges tends to zero.
In particular, we consider an initial triangular mesh with a uniform mesh size; see Figure~\ref{fig:meshesDegEd} (left).
Starting from this grid, we generate a sequence of meshes by halving recursively the edges of the element at the center of the mesh,
leading to a polygon with an increasing number of edges; see Figure~\ref{fig:meshesDegEd} (middle and right).
We indicate the number of the edges of the polygon with $\# edges$.
In Figure~\ref{fig:infsup_h_deged}, we report the values of~$\beta_h$ as a function of $\# edges$ for different choices of the discrete velocity
and pressure spaces $\mathcal P^{m+k} - \mathcal P^m$, $k=0,1,2$, with ($\eta = 1$) and without ($\eta = 0$) pressure stabilization.
The computed numerical \textit{inf-sup} constant $\beta_h$ seems to be independent of the size of the edges for any choice of $k$ and the pressure stabilization term.
This indicates that Assumption~\ref{ass:star} in Proposition~\ref{prop:gen_inf_sup} may be relaxed; see Remark~\ref{rem:betah}.

\begin{figure}[!htbp]
    \centering
    \includegraphics[width=0.36\textwidth]{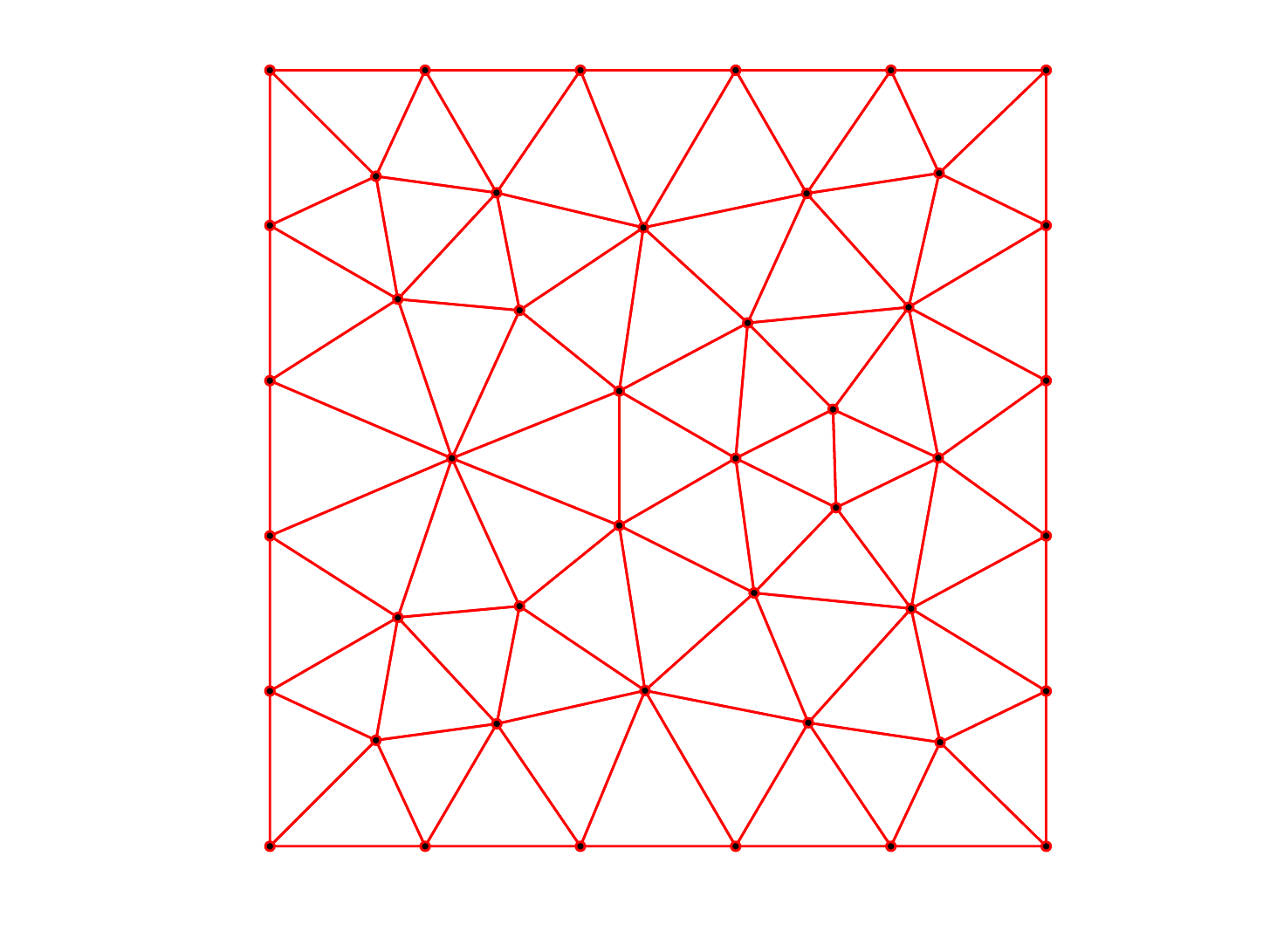}
    \hspace{-0.9cm}
    \includegraphics[width=0.36\textwidth]{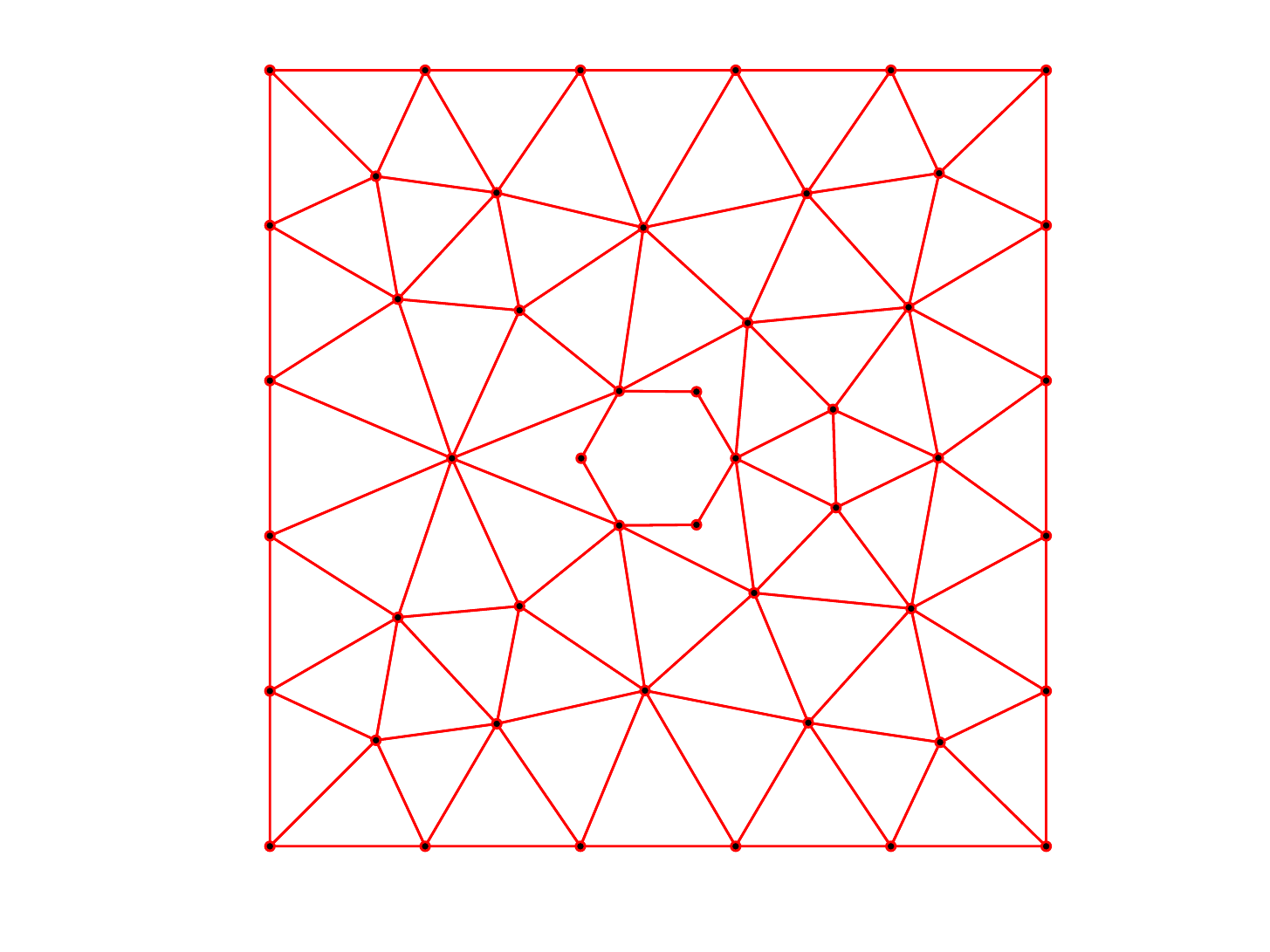}
    \hspace{-0.9cm}
    \includegraphics[width=0.36\textwidth]{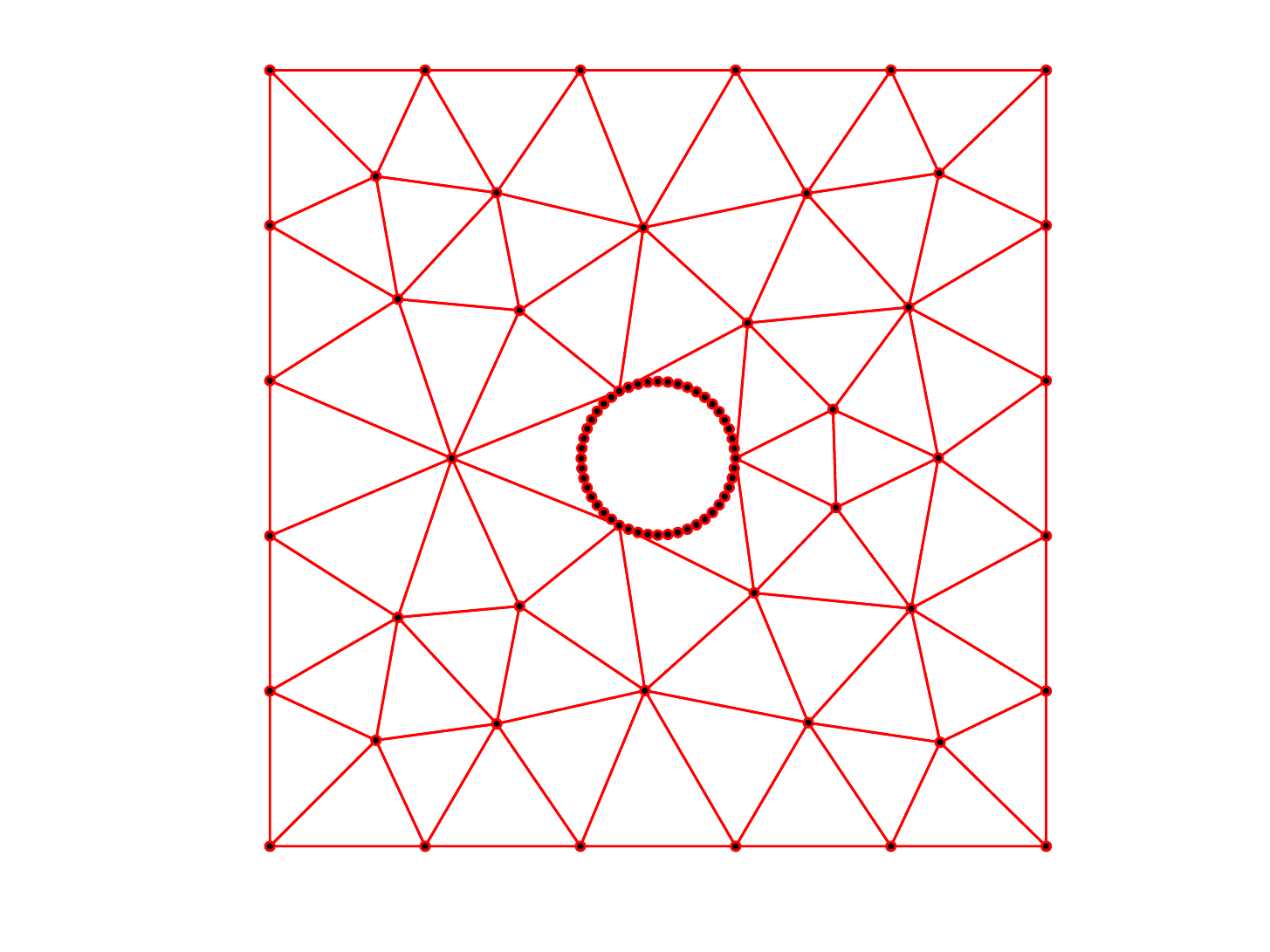}
    \caption{Meshes obtained with a recursive splitting of the edges of the
    element at the center. From left to right: initial triangular meshes with
    $\# edges = 3$; mesh at the first iteration with $\# edges = 6$; mesh at the
    fourth iteration with $\# edges = 48$.}
    \label{fig:meshesDegEd}
\end{figure}

\begin{figure}[!htbp]
    \centering
    \includegraphics[width=0.343\textwidth]{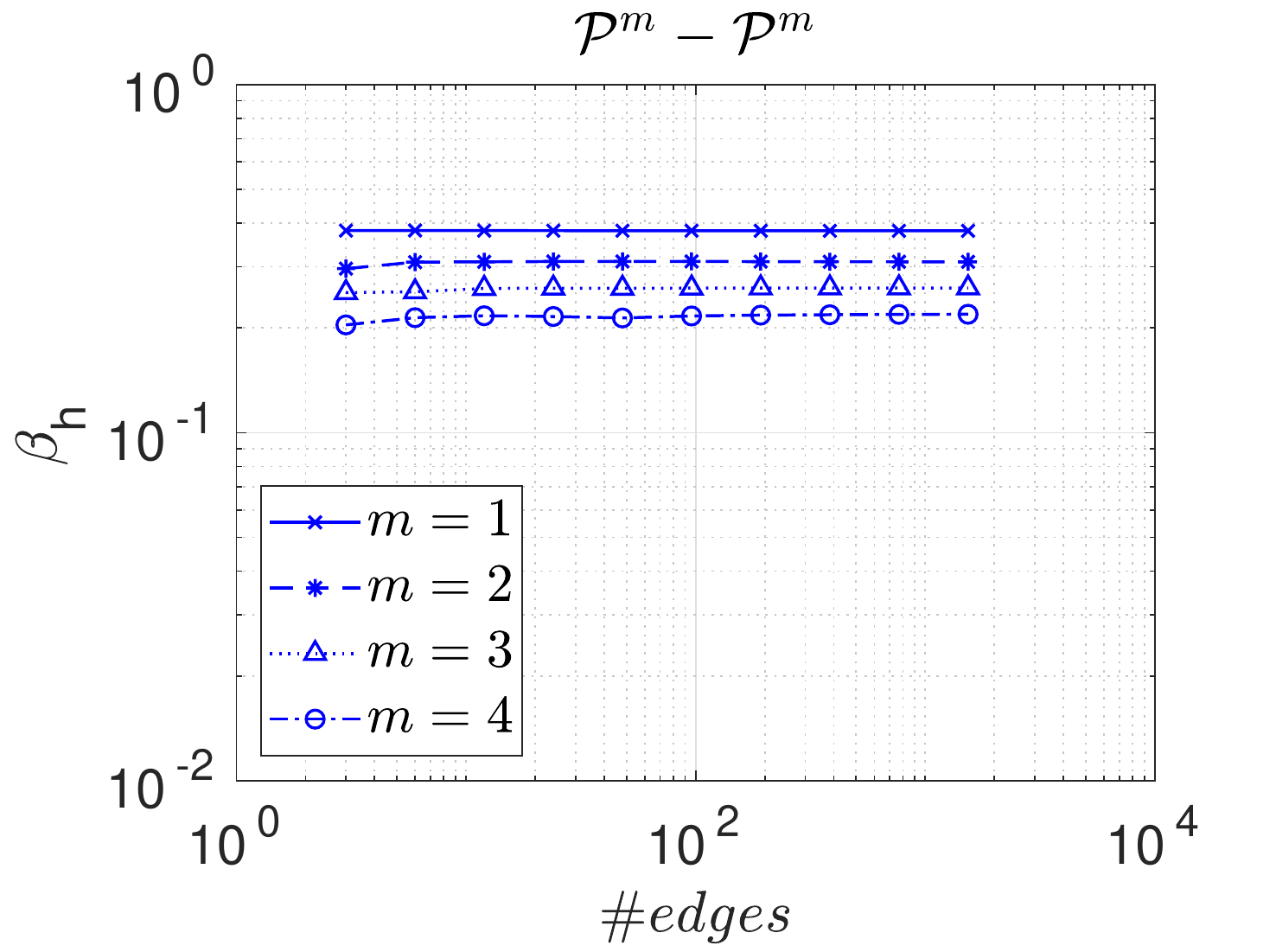}
    \hspace{-0.48cm}
    \includegraphics[width=0.343\textwidth]{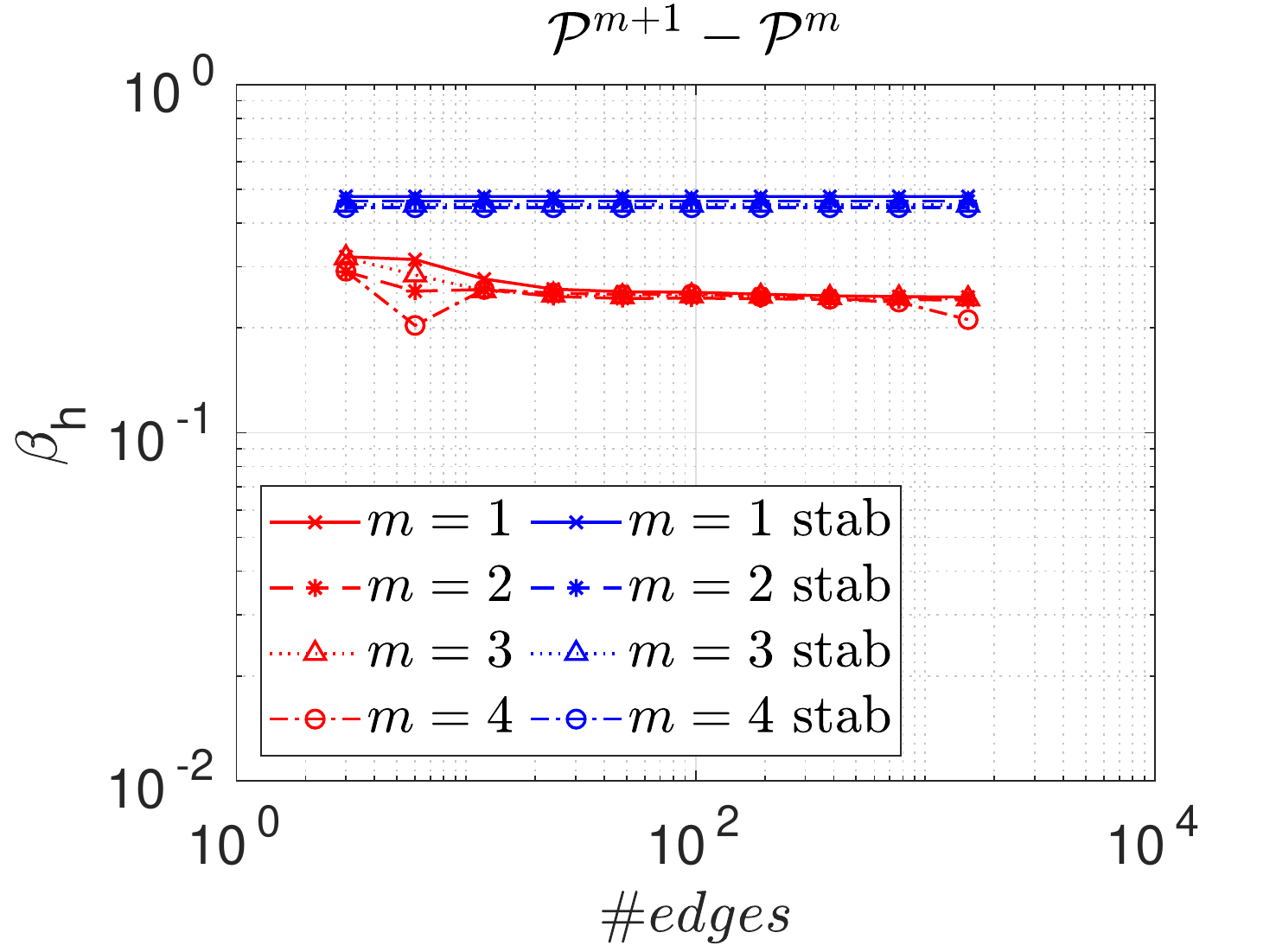}
    \hspace{-0.48cm}
    \includegraphics[width=0.343\textwidth]{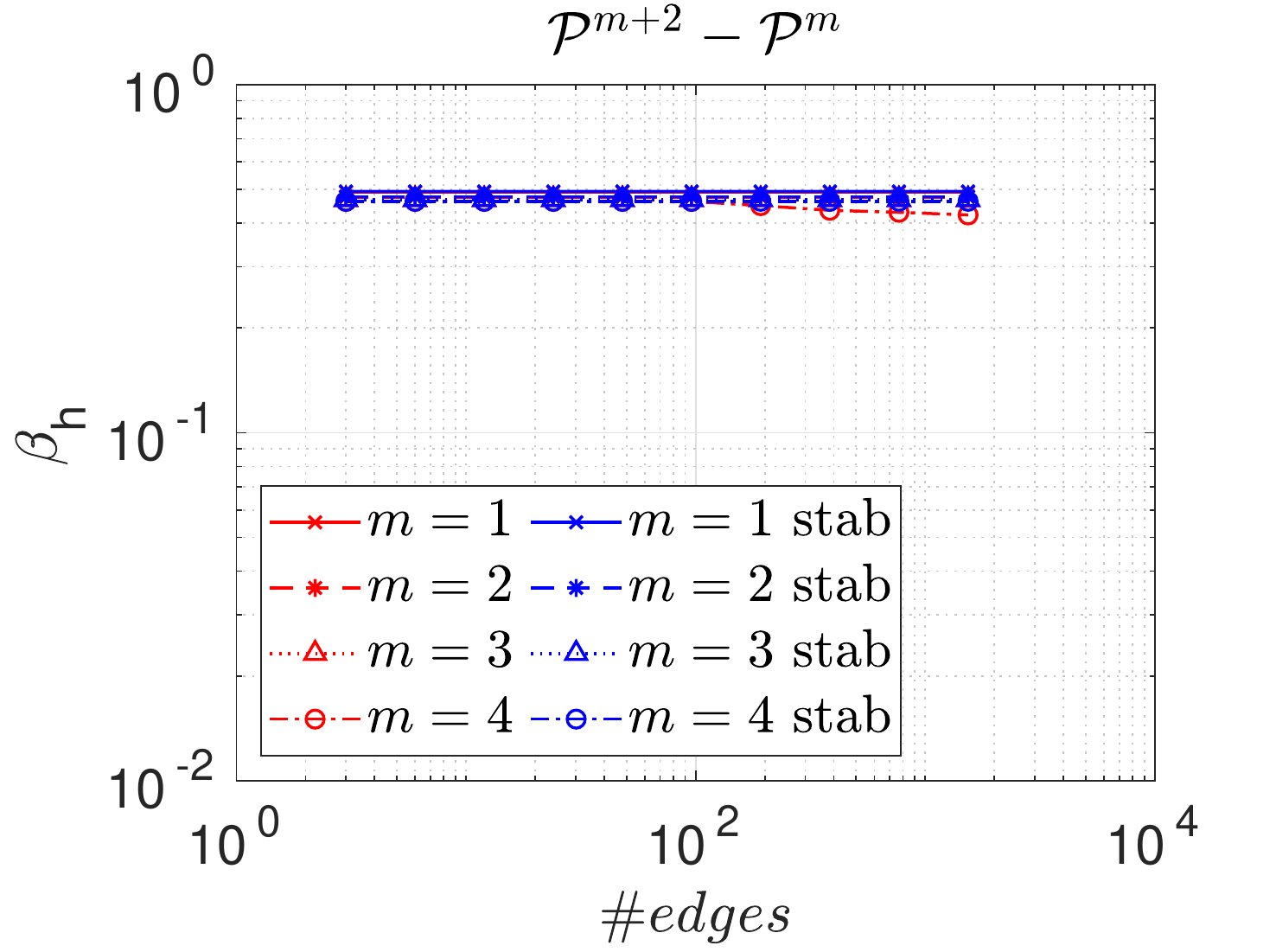}
    \caption{Values of~$\beta_h$ as function of the number of edges $\# edges$ for
    different choices of the polynomial degree for the discrete velocity and
    pressure spaces $\mathcal P^{m+k} - \mathcal P^m$, computed solving the generalized eigenvalue
    problem~\eqref{eq:GEP}. From left to right: $\mathcal P^{m} - \mathcal
    P^m$, $\mathcal P^{m+1} - \mathcal P^m$, $\mathcal P^{m+2} - \mathcal P^m$.
    The parameter $\eta=1$ for $k=0,1,2$ (blue lines), and  $\eta=0$ for $k=1,2$
    (red lines).}
    \label{fig:infsup_h_deged}
\end{figure}


Finally, we consider a sequence of grids that mimics fluid meshes typically appearing in fluid-structure interaction applications; see Section~\ref{sec:examplesFSI} below.
These grids are generated as follows: first, consider a uniform regular triangular mesh of a square domain; next, carve the domain out and get a hole inside it.
For example, this hole may represent a structure domain immersed in a fluid one.
In the proximity of the hole, the resulting mesh presents polygonal elements that may be non-convex, of arbitrary size and of anisotropic shape.

We consider a slender rectangular hole placed in the center of the
square domain that rotates around its center of mass, see
Figure~\ref{fig:meshesAni}, and we study the behaviour of the discrete
\textit{inf-sup} constant $\beta_h$ by varying the angle of rotation $\theta$ of
the hole.
In Figure~\ref{fig:infsup_h_ani}, we plot the value of the discrete
\textit{inf-sup} constant as a function of the angle $\theta$ for different
choices of the discrete velocity and pressure spaces $\mathcal P^{m+k} -
\mathcal P^m$, $k=0,1,2$, $m=1,2$, with and without the pressure stabilization term.
The presence of small or anisotropic elements only slightly deteriorates the constant~$\beta_h$ for the nonstabilized case ($\eta = 0$),
while they seem irrelevant for the stabilized case ($\eta = 1$).

\begin{figure}[!htbp]
    \centering
    \hspace{-0.9cm}
    \includegraphics[width=0.36\textwidth]{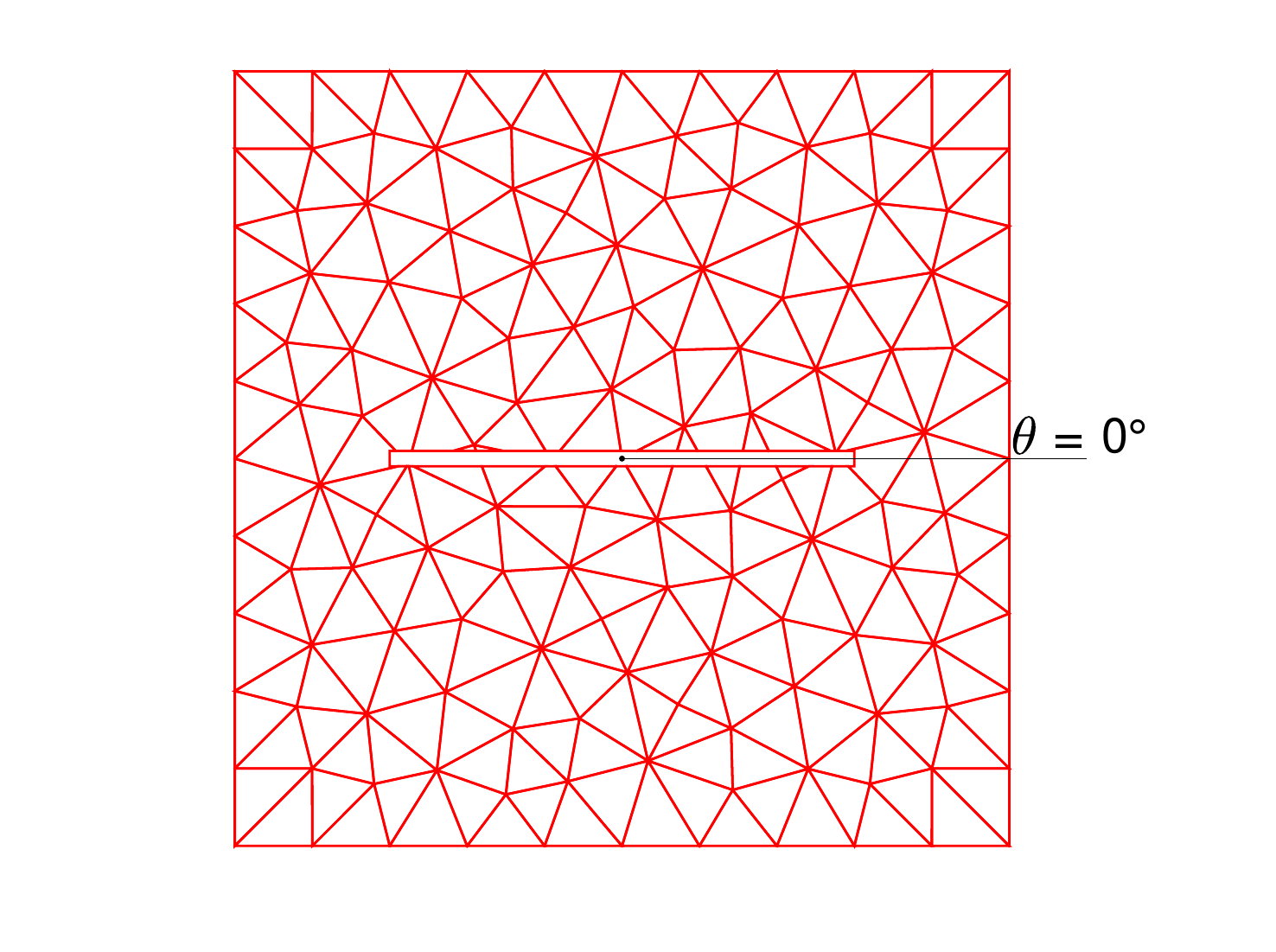}%
    \hspace{-0.4cm}
    \includegraphics[width=0.36\textwidth]{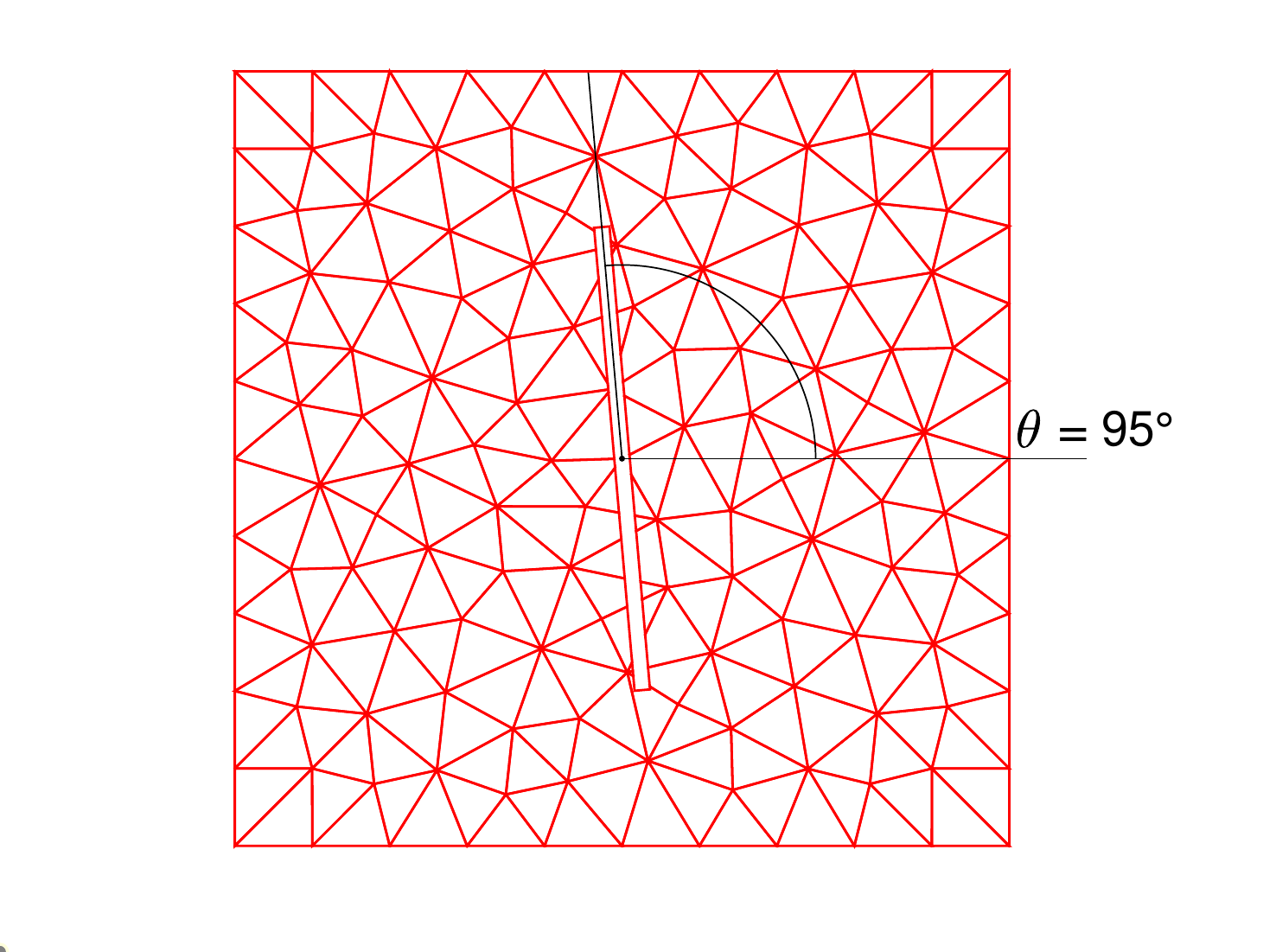}%
    \hspace{-0.4cm}
    \includegraphics[width=0.36\textwidth]{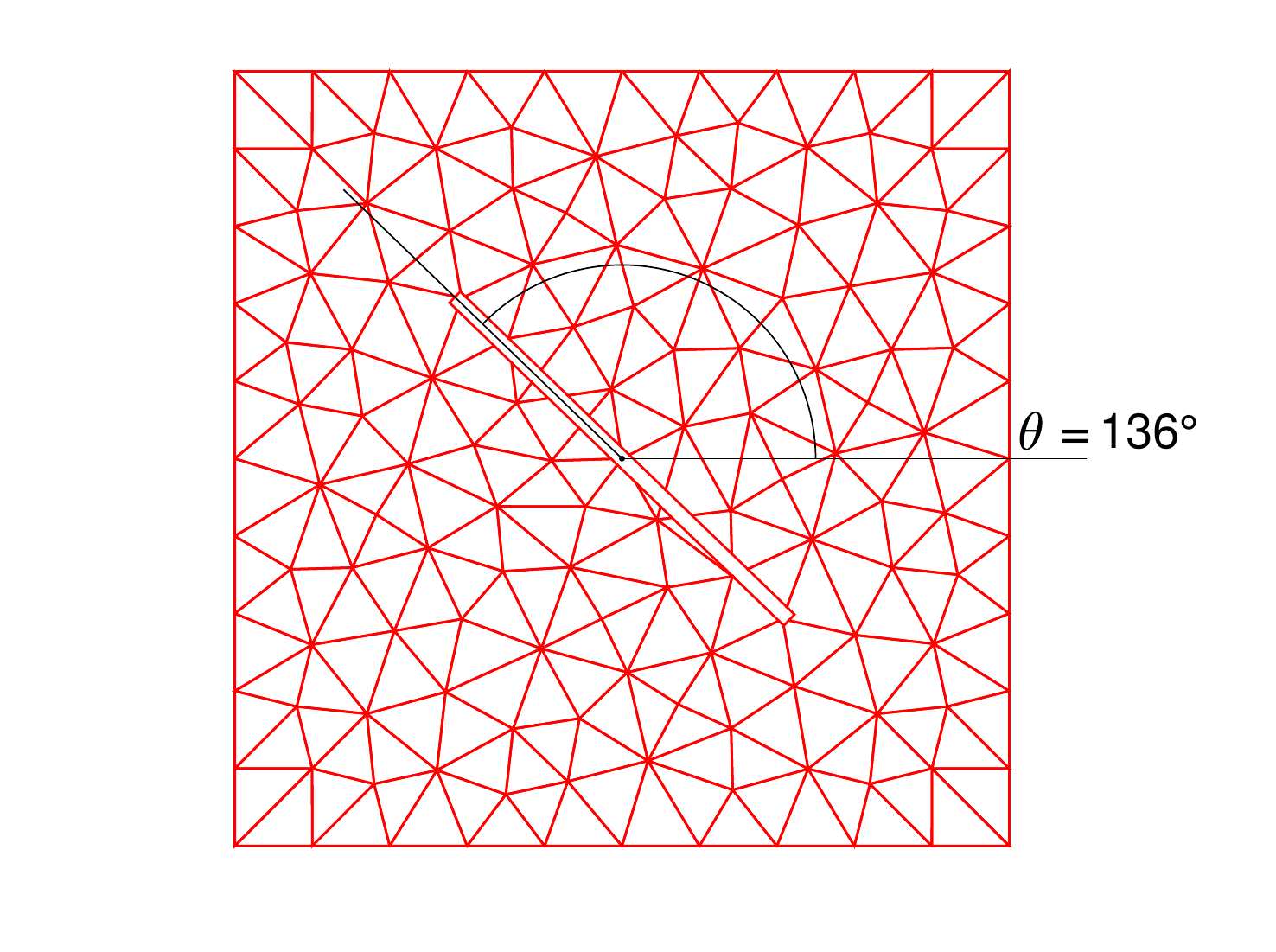}%
    \caption{Polygonal meshes obtained by rotating the hole placed in the center
    of the initial triangular mesh. Small or anisotropic elements appear.
From left to right: polygonal mesh obtained for $\theta = 0^\circ$; polygonal mesh obtained for $\theta = 95^\circ$; polygonal mesh obtained for
    $\theta = 136^\circ$.} \label{fig:meshesAni}
\end{figure}

\begin{figure}[!htbp]
    \centering
    \includegraphics[width=0.343\textwidth]{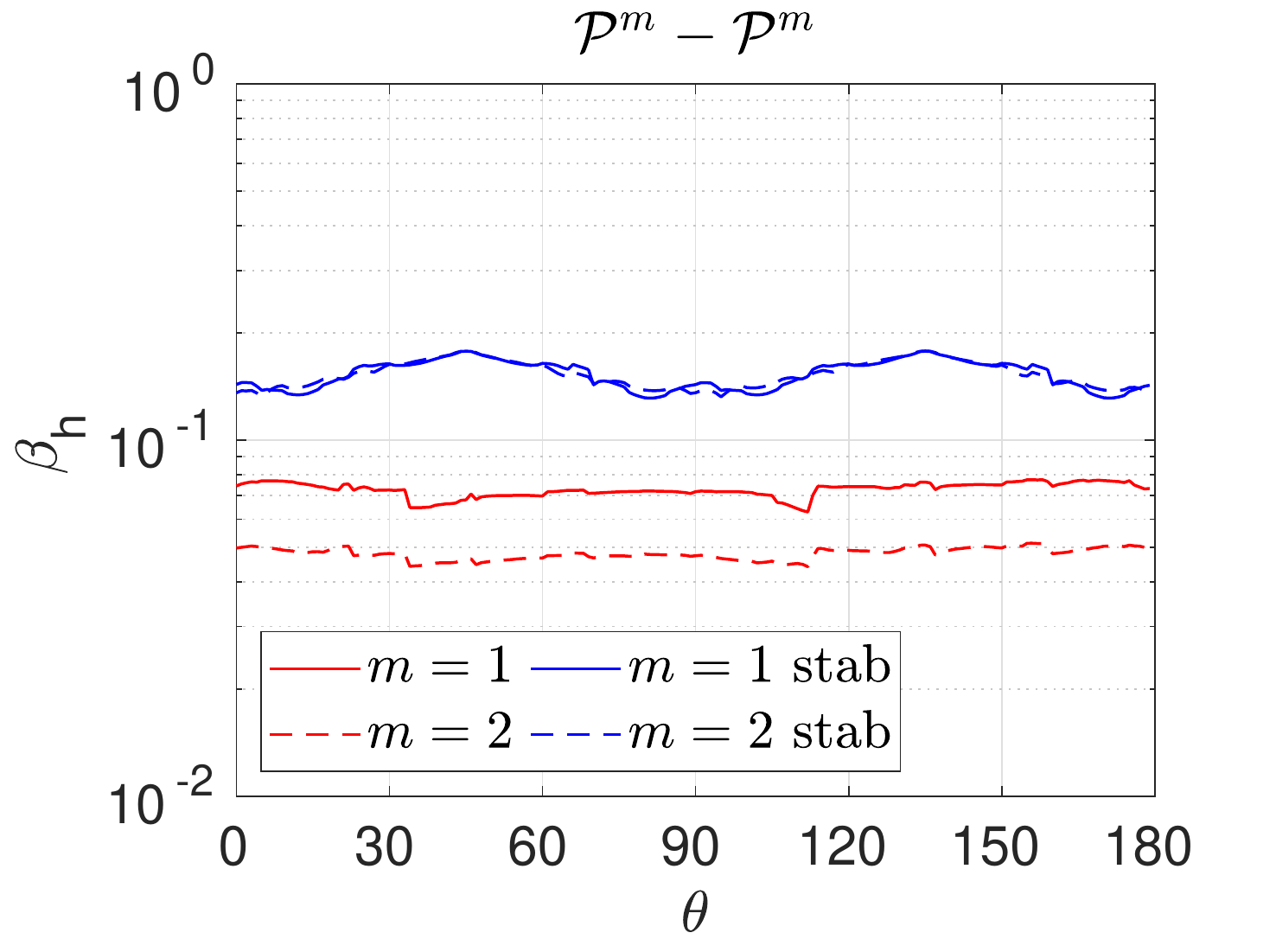}
    \hspace{-0.48cm}
    \includegraphics[width=0.343\textwidth]{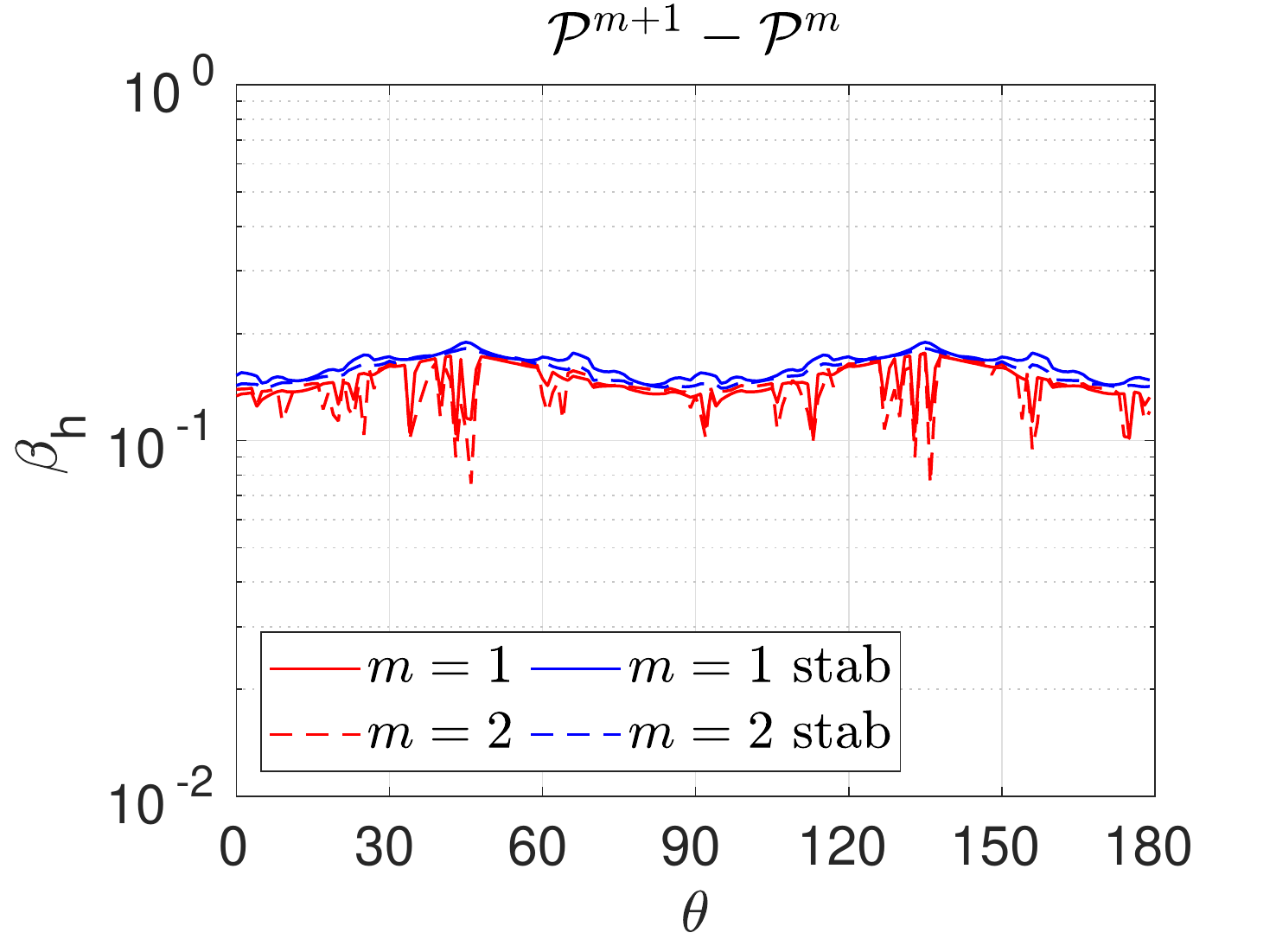}
    \hspace{-0.48cm}
    \includegraphics[width=0.343\textwidth]{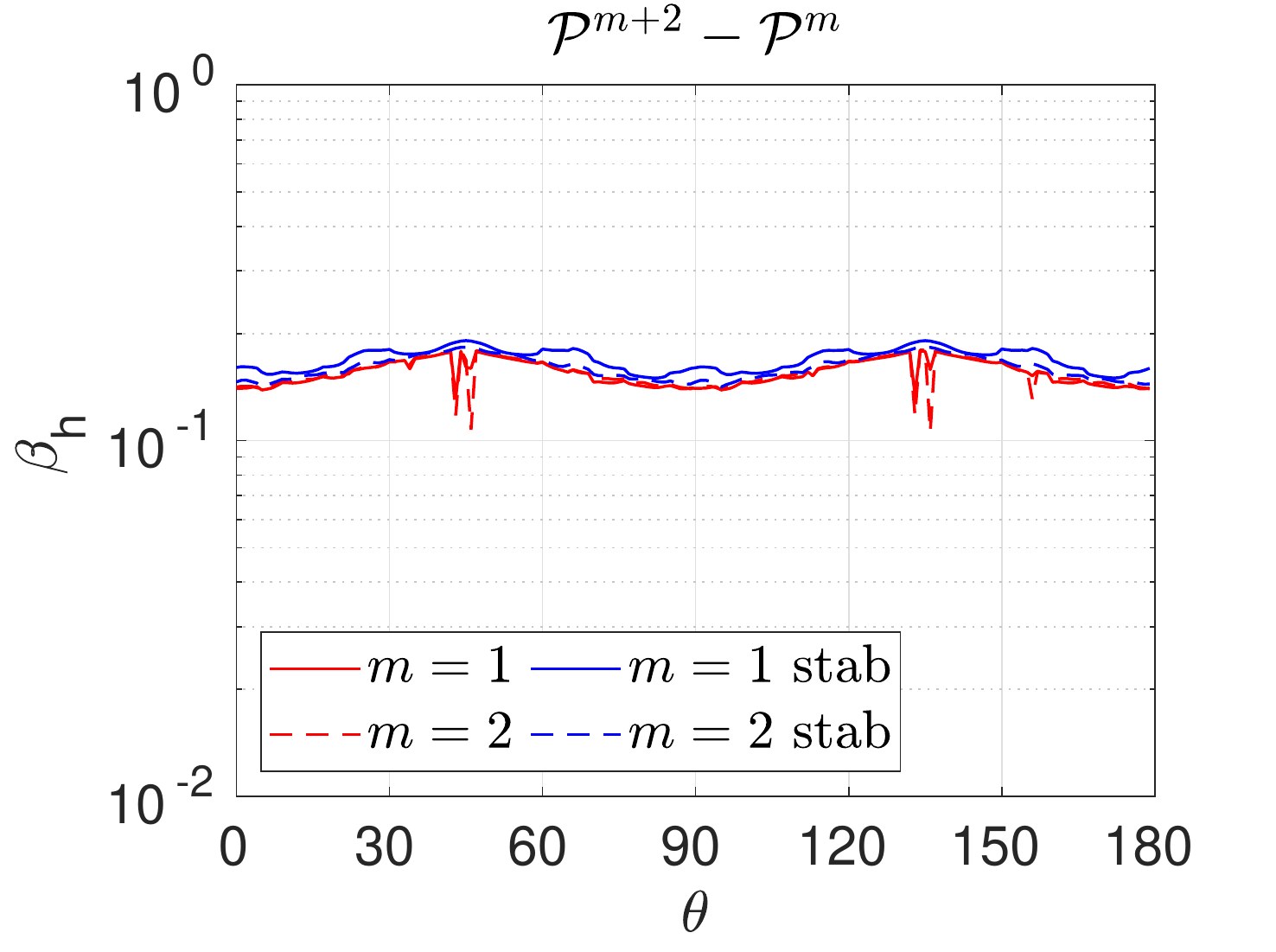}
    \caption{Values of~$\beta_h$ as function of the angle of rotation $\theta$
    for different choices of the polynomial degree for the discrete velocity and
    pressure spaces $\mathcal P^{m+k} - \mathcal P^m$, computed solving the
    generalized eigenvalue problem~\eqref{eq:GEP}. From left to right: $\mathcal
    P^{m} - \mathcal P^m$, $\mathcal P^{m+1} - \mathcal P^m$, $\mathcal P^{m+2}
    - \mathcal P^m$.  The parameter $\eta=1$ (blue lines) and $\eta=0$ (red
    lines).}
    \label{fig:infsup_h_ani}
\end{figure}

\section{A priori error estimates for the stationary Stokes problem} \label{section:abstract-convergence}

Introduce the spaces $\mathcal X = \Vh \cap \lbrack H^2(\Omega) \rbrack^d$ and $\mathcal M = \Qh \cap H^1(\Omega)$ for the velocity and pressure, respectively.
For all $(\xx u, p), (\xx v, q) \in \mathcal X \times \mathcal M$, we consider the discrete bilinear form~$\mathcal B_h$ introduced in equation~\eqref{eq:abs_bil_form}.

Define~$\norm{(\cdot, \cdot)}_{\mathcal X \times \mathcal M}$ as the energy norm
defined on the pair of spaces $\mathcal X \times \mathcal M$. In particular, we
fix
\[
\norm{(\xx v_h, q_h)}^2_{\mathcal X \times \mathcal M} =
\normVh{\xx v_h}^2 + \normQh{q_h}^2.
\]

We now state the main result of the section.
\begin{theorem}[Abstract error estimate] \label{th:abs_err_est}
Let~$(\xx u , p) \in \mathcal X \times \mathcal M$ and~$(\xx u_h, p_h) \in \Vh \times \Qh$ be the solutions to~\eqref{eq:stokesWeak} and~\eqref{eq:abs_stokes}, respectively, and Assumptions~\ref{ass:mesh_all} and~\ref{ass:star} be valid.
Recalling that the bilinear form $\mathcal B_h(\cdot , \cdot)$ is stable and bounded with constants $C^*$ and $C_*$, the following error estimate is valid:
\begin{align}
\label{eq:abs_err_est}
    \norm{(\xx u - \xx u_h , p - p_h)}_{\mathcal X \times \mathcal M}     \le \left( 1 + \frac{C_*}{C^*} \right)\inf_{(\xx v_h, q_h) \in \Vh \times \Qh} \norm{(\xx u - \xx  v_h, p - q_h)}_{\mathcal X \times \mathcal M}.
\end{align}
\end{theorem}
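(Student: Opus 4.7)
The plan is to follow the classical Strang/second-Strang pattern: combine a triangle inequality, the discrete stability [BNB(i)] proved in Section~\ref{sec:wellposed}, Galerkin consistency of the PolyDG formulation, and continuity of $\mathcal{B}_h$ on $\mathcal X \times \mathcal M$.

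First, I fix an arbitrary $(\xx v_h, q_h) \in \Vh \times \Qh$ and apply the triangle inequality
\[
\norm{(\xx u - \xx u_h, p - p_h)}_{\mathcal X \times \mathcal M} \le \norm{(\xx u - \xx v_h, p - q_h)}_{\mathcal X \times \mathcal M} + \norm{(\xx v_h - \xx u_h, q_h - p_h)}_{\mathcal X \times \mathcal M},
\]
which reduces the task to controlling the purely discrete difference $(\xx v_h - \xx u_h, q_h - p_h) \in \Vh \times \Qh$. To this end, I invoke the discrete stability property [BNB(i)] already verified in Section~\ref{sec:wellposed} for $\mathcal{B}_h$ restricted to $\Vh \times \Qh$, with constant $C^*$: there exists a test pair $(\xx w_h, z_h) \in \Vh \times \Qh$, $(\xx w_h, z_h)\ne(\xx 0,0)$, such that
\[
C^* \norm{(\xx v_h - \xx u_h, q_h - p_h)}_{\mathcal X \times \mathcal M} \le \frac{\mathcal{B}_h((\xx v_h - \xx u_h, q_h - p_h), (\xx w_h, z_h))}{\norm{(\xx w_h, z_h)}_{\mathcal X \times \mathcal M}}.
\]

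Next, I invoke Galerkin consistency. Since $(\xx u,p) \in \mathcal X \times \mathcal M$ has enough regularity so that $\jump{\xx u}$, $\{\nabla_h \xx u\}$ mismatches, and $\jump{p}$ vanish across interior faces, element-wise integration by parts applied to~\eqref{eq:stokesPb} yields $\mathcal{B}_h((\xx u, p),(\xx w_h, z_h)) = F((\xx w_h, z_h))$ for all $(\xx w_h, z_h) \in \Vh \times \Qh$. Subtracting~\eqref{eq:abs_stokes} gives the orthogonality $\mathcal{B}_h((\xx u - \xx u_h, p - p_h),(\xx w_h, z_h)) = 0$. Writing $\xx v_h - \xx u_h = (\xx v_h - \xx u)+(\xx u - \xx u_h)$ and $q_h - p_h = (q_h - p)+(p - p_h)$ in the numerator above therefore eliminates the error term, leaving $\mathcal{B}_h((\xx v_h - \xx u, q_h - p),(\xx w_h, z_h))$, which by continuity of $\mathcal{B}_h$ on $\mathcal X \times \mathcal M$ (with constant $C_*$) is bounded by $C_* \norm{(\xx u - \xx v_h, p - q_h)}_{\mathcal X \times \mathcal M}\norm{(\xx w_h, z_h)}_{\mathcal X \times \mathcal M}$. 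Dividing out $\norm{(\xx w_h, z_h)}_{\mathcal X \times \mathcal M}$, inserting the result into the triangle inequality, and taking the infimum over $(\xx v_h, q_h) \in \Vh \times \Qh$ yields~\eqref{eq:abs_err_est} with constant $1 + C_*/C^*$.

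The abstract template is routine; the substantive point is the Galerkin-consistency step, i.e.\ showing that $(\xx u,p)$ satisfies the discrete equation \emph{exactly} when tested against discrete pairs, not merely up to a consistency residual. This is precisely where the $[H^2(\Omega)]^d \times H^1(\Omega)$ regularity encoded in $\mathcal X \times \mathcal M$ is used: it kills the jumps that would otherwise produce nonzero face contributions in $a_h$, $b_h$, and $s_h$. Continuity of $\mathcal{B}_h$ on the enlarged space $\mathcal X \times \mathcal M$ must also be verified — this proceeds along the same trace-inverse and Cauchy–Schwarz arguments already used in the continuity bound for $a_h$, extended to include the pressure terms in $b_h$ (for which the $H^1$-regularity of $p$ is what allows the face integrals of $\{p\xx I\}:\jump{\xx v_h}$ to be controlled by $\normQh{\cdot}$) and the identical vanishing of $s_h(p,\cdot)$ on smooth pressures.
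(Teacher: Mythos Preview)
Your proof is correct and follows essentially the same route as the paper: triangle inequality, discrete inf-sup stability (what the paper loosely calls ``coercivity'' in the proof), Galerkin orthogonality under the assumed $[H^2]^d\times H^1$ regularity, and continuity of $\mathcal B_h$ on $\mathcal X\times\mathcal M$. Your additional remarks on why the regularity is needed for consistency and continuity are accurate and slightly more explicit than the paper's one-line justification.
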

\begin{proof}
For all~$(\xx v_h, q_h) \in \Vh \times \Qh$, we apply the triangle inequality and get
\begin{equation}
\label{eq:EE_main}
\norm{(\xx u - \xx u_h , p - p_h)}_{\mathcal X \times \mathcal M} \le \norm{(\xx
u - \xx v_h , p - q_h)}_{\mathcal X \times \mathcal M} + \norm{(\xx v_h - \xx
u_h , q_h - p_h)}_{\mathcal X \times \mathcal M}.
\end{equation}
We have the following Galerkin orthogonality property. Given~$({\xx u}, p)$ and~$({\xx u_h}, p_h)$ the solutions to the continuous and discrete Stokes problem, respectively, we can write
\begin{equation} \label{Galerkin-orthogonality}
\mathcal B_h((\xx{u}- \xx{u_h}, p-p_h), ({\xx w_h}, r_h))=0 \quad\quad \forall ({\xx w_h}, r_h) \in {\xx V}_h^\ell \times Q_h^m.
\end{equation}
The main tool used in proving~\eqref{Galerkin-orthogonality} is the extra smoothness required on the continuous pressure~$p$, whence the jump terms involving it disappear.

Using the coercivity, the Galerkin orthogonality~\eqref{Galerkin-orthogonality}, and the continuity of the form $\mathcal B_h$,
we can show an upper bound on the second term of the previous inequality
as follows:
\begin{align}
\label{eq:EE_2}
\begin{split}
    \norm{(\xx v_h - \xx u_h , q_h - p_h)}_{\mathcal X \times \mathcal M} &\le
    \frac{1}{C^*} \sup_{(\xx w_h, r_h) \in \Vh \times \Qh}
    \frac{ \mathcal B_h((\xx v_h - \xx u_h, q_h - p_h),(\xx w_h, r_h)) }
    {\norm{(\xx w_h, r_h)}_{\mathcal X \times \mathcal M}} \\
    &= \frac{1}{C^*}
    \sup_{(\xx w_h, r_h) \in \Vh \times \Qh}
    \frac{ \mathcal B_h((\xx v_h - \xx u, q_h - p),(\xx w_h, r_h)) }
    {\norm{(\xx w_h , r_h)}_{\mathcal X \times \mathcal M}} \\
    &\quad + \frac{1}{C^*} \sup_{(\xx w_h, r_h) \in \Vh \times \Qh}
    \frac{ \mathcal B_h((\xx u - \xx u_h, p - p_h),(\xx w_h , r_h)) }
    {\norm{(\xx w_h , r_h)}_{\mathcal X \times \mathcal M}} \\
    &\le \frac{C_*}{C^*} \norm{(\xx u - \xx v_h, p - q_h)}_{\mathcal X \times
    \mathcal M}
\end{split}
\end{align}
Inserting the inequality~\eqref{eq:EE_2} into \eqref{eq:EE_main}, the assertion
follows.
\end{proof}

Finally, by employing the approximation results reported in
Section~\ref{sec:wellposed} with Theorem~\ref{th:abs_err_est}, we show the
$hp$-version a priori error estimate for the discrete Stokes problem
in~\eqref{eq:abs_stokes}.

\begin{corollary}[Convergence rate in the energy norm] \label{cor:convergence}
Let $\mathcal T_h$ be a polytopic mesh and~$\mathcal T_h^\#$ be the corresponding covering satisfying Definition~\ref{def_mesh_polytopic_regular}.
Moreover, let Assumptions~\ref{ass:mesh_all} and~\ref{ass:star}, and the hypotheses of Theorem~\ref{th:abs_err_est} be valid. Finally assume that~$\ell \ge m-1$.
If, for any $K \in \mathcal T_h$, $(\xx u, p)|_K \in H^{r}(K) \times H^{r-1}(K)$, with $r > 1 + d/2$, such that for any $\mathcal K \in \mathcal T_h^\#$, $K \subset \mathcal K$, $(\mathcal E \xx u, \mathcal E p)|_{\mathcal K} \in H^{r}(\mathcal K) \times H^{r-1}(\mathcal K)$, then
$$
\norm{(\xx u - \xx u_h, p - p_h)}_{\mathcal X \times \mathcal M} \lesssim     \sum_{K \in \mathcal T_h} \frac{ h_K^{s-1} }{\ell^{r-3/2} }    \left( \norm{\mathcal E \xx u}_{H^r(\mathcal K)}    + \norm{\mathcal E p}_{H^{r-1} (\mathcal K)} \right),
$$
where $s = \min\{ \ell+1 , m+2 , r \}$ and the hidden positive constant is
independent of the discretization parameters.
\end{corollary}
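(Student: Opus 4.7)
The plan is to apply the abstract error estimate of Theorem~\ref{th:abs_err_est} with the test pair $\xx v_h = \Pi^\ell \xx u$ and $q_h = \Pi^m p$, where $\Pi^\ell$ and $\Pi^m$ are the global polynomial projectors defined after Lemma~\ref{lm:localapproxTPi}, and then estimate the resulting best-approximation error
\[
\norm{(\xx u - \Pi^\ell \xx u, \; p - \Pi^m p)}_{\mathcal X \times \mathcal M}^2 = \normVh{\xx u - \Pi^\ell \xx u}^2 + \normQh{p - \Pi^m p}^2
\]
elementwise and then take square roots. The hypothesis $\ell \ge m-1$ yields $m \lesssim \ell$, so polynomial-degree factors in $m$ and $\ell$ are interchangeable up to constants, which is precisely what allows the unified rate $\ell^{-(r-3/2)}$ to absorb the various pressure rates that appear along the way.

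The velocity contribution is handled directly by Lemma~\ref{lm:approxDG}, producing $h_K^{\min\{\ell+1,r\}-1}\,\ell^{-(r-3/2)}$ per element. For the pressure I would further split $\normQh{p-\Pi^m p}^2 = \normQ{p-\Pi^m p}^2 + \seminormJ{p-\Pi^m p}^2$. The volumetric $L^2$-piece follows immediately from Lemma~\ref{lm:localapproxTPi} with $q=0$, yielding $h_K^{\min\{m+1,r-1\}}\,m^{-(r-1)}$. The jump seminorm is the only step that needs a dedicated argument: since $r > 1 + d/2$ implies $H^{r-1}(\Omega) \hookrightarrow C^0(\overline\Omega)$, the pressure $p$ has a single-valued trace on each interior face and $\jump{p} = \xx 0$, hence $\jump{p - \Pi^m p} = -\jump{\Pi^m p}$. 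Exploiting $\xx n^- = -\xx n^+$, this vector is bounded pointwise on each $F \in \mathcal F_h^i$ by $|(p-\Pi^m_{K^+}p)|_F| + |(p-\Pi^m_{K^-}p)|_F|$. Plugging this into $\seminormJ{\cdot}$, using $\sigma_p \sim h_K/m$ from Definition~\ref{def:sigmav_sigmap}, and invoking the face version of Lemma~\ref{lm:localapproxTPi} (whose hypothesis $r-1 \ge 1$ is guaranteed by $r > 1 + d/2$), one obtains the same rate $h_K^{\min\{m+1,r-1\}}\,m^{-(r-1)}$ as the $L^2$-piece, up to constants.

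Assembling the three contributions and observing that both $\min\{\ell, r-1\}$ (from the velocity) and $\min\{m+1, r-1\}$ (from the pressure) majorize $s-1 = \min\{\ell, m+1, r-1\}$, the worst $h$-exponent $s-1$ emerges as the common rate. The $m$-powers collapse into $\ell$-powers by $m \lesssim \ell$, the (larger) velocity factor $\ell^{-(r-3/2)}$ dominates the (smaller) pressure factor $\ell^{-(r-1)}$, and the element-wise summation yields exactly the claimed bound. A minor technical point is that $\Pi^m p$ need not have zero mean and so may not lie in $\Qh$; this is cured by subtracting its (global) mean, an adjustment by a constant that leaves jumps and gradients unchanged and preserves the $L^2$-rate up to a harmless multiplicative constant. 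The main substantive step is therefore the jump-seminorm estimate, which is also the place where the extra regularity threshold $r > 1 + d/2$ genuinely enters.
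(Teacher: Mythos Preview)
Your proposal is correct and follows essentially the same route as the paper: insert $(\Pi^\ell \xx u, \Pi^m p)$ into the abstract estimate of Theorem~\ref{th:abs_err_est}, bound the velocity piece via Lemma~\ref{lm:approxDG} (the paper re-derives this by hand from Lemma~\ref{lm:localapproxTPi}), and bound the pressure $L^2$- and jump-pieces via the volume and face estimates of Lemma~\ref{lm:localapproxTPi} combined with $\sigma_p \sim h_K/m$.

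One remark: your detour through the Sobolev embedding $H^{r-1}(\Omega)\hookrightarrow C^0(\overline\Omega)$ to get $\jump{p}=\xx 0$ is unnecessary. The paper simply bounds $\norm{\sigma_p^{1/2}\jump{p-\Pi^m p}}_{L^2(F)}$ by the two one-sided traces $\norm{p-\Pi^m_{K^\pm}p}_{L^2(\partial K^\pm)}$ and applies the face estimate of Lemma~\ref{lm:localapproxTPi} directly; this only needs $p\in H^1(\Omega)$ (so that the face estimate applies with regularity index $r-1\ge 1$), which already follows from $r>1+d/2$. So the threshold $r>1+d/2$ is not ``genuinely'' needed at that step in the sense you suggest. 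Conversely, your observation that $\Pi^m p$ need not have zero average and should be corrected by subtracting its mean is a valid point that the paper glosses over.
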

\begin{proof}
By considering equation~\eqref{eq:abs_err_est}, we set
$$
\mathcal I = \inf_{(\xx v_h, q_h) \in \Vh \times \Qh} \norm{(\xx u - \xx v_h, p
- q_h)}_{\mathcal X \times \mathcal M}.
$$
Recall that~$\Pi^\ell$ denotes the best polynomial approximant introduced in Lemma~\ref{lm:localapproxTPi}.
With an abuse of notation, we shall use the same symbol for scalar, vector, and tensor approximants. We have
\begin{align*}
\begin{split}
\mathcal{I}^2 &=
\inf_{(\xx v_h, q_h) \in \Vh \times \Qh} \norm{(\xx u - \xx v_h, p -
q_h)}^2_{\mathcal X \times \mathcal M} \le \norm{(\xx u - \Pi^{\ell} \xx u, p -
\Pi^{m} p)}^2_{\mathcal X \times \mathcal M},\\
& \le \underbrace{\sum_{K \in \mathcal T_h} \norm{ \mu^{1/2} \nabla \left( \xx u -
\Pi^{\ell} \xx u \right) }^2_{L^2(K)} + \sum_{F \in \mathcal F_h} \norm{
\sigma_v^{1/2} \jump{\xx u - \Pi^{\ell} \xx u } }^2_{L^2(F)}}_{\circled{A}}\\
& \qquad + \underbrace{\sum_{K \in \mathcal T_h} \norm{ p - \Pi^{m} p }^2_{L^2(K)} +
\sum_{F \in \mathcal F_h} \norm{ \sigma_p^{1/2} \jump{p - \Pi^{m} p }
}^2_{L^2(F)}}_{\circled{B}}.
\end{split}
\end{align*}
By using Lemmata~\ref{lm:localapproxTPi} and~\ref{lm:discInvIn1},
Assumption~\ref{ass:local_unif}, and~$\ell \ge m-1$, we can prove
the following bounds on the terms $\circled{A}$ and $\circled{B}$:
\begin{align*}
\begin{split}
\circled{A}
&\lesssim \sum_{K \in \mathcal T_h} \mu \frac{h_K^{2(s_u -1)}}{\ell^{2(r-1)}}
\norm{ \mathcal E \xx u }^2_{H^r(\mathcal K)}
+ \sum_{K \in \mathcal T_h} (\max_{F \subset \partial K} \sigma_v )
\frac{h_K^{2(s_u-1/2)}}{\ell^{2(r-1/2)}} \norm{ \mathcal E \xx u }^2_{H^r(\mathcal
K)}\\
&\lesssim \sum_{K \in \mathcal T_h} \frac{h_K^{2(s_u -1)}}{\ell^{2(r-1)}} \norm{
\mathcal E \xx u }^2_{H^r(\mathcal K)}
+ \sum_{K \in \mathcal T_h} \frac{\ell^2}{h_K}
\frac{h_K^{2(s_u-1/2)}}{\ell^{2(r-1/2)}} \norm{ \mathcal E \xx u
}^2_{H^r(\mathcal K)}\\
&\lesssim \sum_{K \in \mathcal T_h} \frac{h_K^{2(s_u -1)}}{\ell^{2(r-3/2)}} \norm{
\mathcal E \xx u }^2_{H^r(\mathcal K)},
\end{split}
\end{align*}
and
\begin{align*}
\begin{split}
\circled{B}
&\lesssim \sum_{K \in \mathcal T_h} \frac{h_K^{2s_p}}{m^{2(r-1)}} \norm{\mathcal
E p}^2_{H^{r-1}(\mathcal K)}
+ \sum_{K \in \mathcal T_h} (\max_{F \subset \partial K} \sigma_p)
\frac{h_K^{2(s_p-1/2)}}{m^{2(r-3/2)}} \norm{ \mathcal E p}^2_{H^{r-1}(\mathcal
K)}\\
&\lesssim \sum_{K \in \mathcal T_h} \frac{h_K^{2s_p}}{m^{2(r-1)}} \norm{\mathcal
E p}^2_{H^{r-1}(\mathcal K)}
+ \sum_{K \in \mathcal T_h} \frac{h_K}{m} \frac{h_K^{2(s_p-1/2)}}{m^{2(r-3/2)}}
\norm{ \mathcal E p}^2_{H^{r-1}(\mathcal K)}\\
&\lesssim \sum_{K \in \mathcal T_h} \frac{h_K^{2s_p}}{m^{2(r-1)}} \norm{\mathcal
E p}^2_{H^{r-1}(\mathcal K)},
\end{split}
\end{align*}
where $s_u = \min\{ \ell+1, r \}$ and $s_p = \min\{m+1, r-1\}$.

Finally, we obtain, $s= \min \{ \ell+1 , m+2, r \}$,
$$
\mathcal{I}^2 \lesssim \sum_{K \in \mathcal T_h}\frac{h_K^{2(s-1)}}{\ell^{2(r-3/2)}}
\left( \norm{\mathcal E \xx u}_{H^r(\mathcal K)}
+ \norm{\mathcal E p}_{H^{r-1}(\mathcal K)} \right)^2.
$$

By inserting the bound on~$\mathcal I$ into~\eqref{eq:abs_err_est}, the assertion follows.
\end{proof}

\begin{remark} \label{remark:suboptimality-convergence}
The estimate of Corollary~\ref{cor:convergence} is suboptimal in terms of half a polynomial order also due to the presence of the coercivity constant~$C^* = C^* (\beta_h)$,
where $\beta_h$ is the discrete generalized \textit{inf-sup} constant introduced in Proposition~\ref{prop:gen_inf_sup}.
\end{remark}

\begin{remark}
By assuming $h \simeq h_K$, for any $K \in \mathcal T_h$, and uniform regularity
of the solution, the estimate in Corollary~\ref{cor:convergence} becomes:
$$
    \norm{(\xx u - \xx u_h, p - p_h)}_{\mathcal X \times \mathcal M} \lesssim
    \frac{ h^{s-1} }{\ell^{r-3/2} }
    \left( \norm{\mathcal E \xx u}_{H^r(\cup_{K \in \mathcal T_h} \mathcal K)}
    + \norm{\mathcal E p}_{H^{r-1} (\cup_{K \in \mathcal T_h}\mathcal
    K)} \right).
$$
where $s = \min\{ \ell+1 , m+2 , r \}$ and the hidden positive constant is
independent of the discretization parameters.
\end{remark}

\begin{remark}
We can also prove a priori error estimates by setting minimal regularity $\mathcal X = \Vh \cap H^1_0(\Omega)$ and $\mathcal M = \Qh$ for the velocity and pressure, respectively.
This requires to introduce a nonconsistent formulation, modify the bilinear form in equation~\eqref{eq:abs_bil_form}, and consider the residual term of the Strang's lemma in
Theorem~\ref{th:abs_err_est}.
\end{remark}

\section{An application: PolyDG for FSI problems}\label{sec:time}

In this section, we introduce a continuos FSI problem and its PolyDG
discretization, with the aim of further exploring the stability properties of
the PolyDG discretization of the Stokes problem and their impact on the
approximation of related differential problems; see
Section~\ref{sec:examplesFSI} below.

Let $\Omega \subset \mathbb{R}^d$ and $\Omega_s \subset \mathbb{R}^d$, $d =
2,3$, be two polygonal/polyhedral domains.
In~$\Omega$, we consider an incompressible Newtonian fluid with density $\rho$
and dynamic viscosity $\mu$, where $\xx{u}$ and $p$ are the fluid velocity and
pressure, while in $\Omega_s$ we consider a linear elastic material with density
$\rho_s$, Young's modulus $E$, and Poisson's ratio $\nu$, where $\xx{d}$ is the
solid displacement.

In what follows, we denote by $\Sigma$ the fluid-structure interface and by
$\xx{n}$ its normal vector pointing outwards of $\Omega_s$. We indicate with
$\partial \Omega$ and $\partial \Omega_s$ the outer boundary of the fluid and
solid domain, respectively. The domains may change in time.

The fluid-structure interaction problem reads as follows:
for any $t \in ( 0 , T ]$, with $T>0$, find the fluid velocity $\xx{u} = \xx{u}(t)$, the fluid pressure $p = p(t)$, and the solid displacement $\xx{d} = \xx{d}(t)$, such that
    \begin{align*}
    & \rho \partial_t \xx{u} - \nabla \cdot \xx{T}_f (\xx{u} , p) = \xx f     && \text{in} \; \Omega(t),\\
    & \nabla \cdot \xx{u} = 0     && \text{in} \; \Omega(t), \\
    & \xx{u} = 0     && \text{on} \; \partial \Omega, \\
    & \xx{u} = \partial_t \xx{d}     && \text{on} \; \Sigma(t), \\
    & \xx{T}_f (\xx{u},p )\xx{n} = \xx{T}_s (\xx{d}) \xx{n}  && \text{on} \; \Sigma(t), \\
    & \rho_s \partial_{tt} \www{\xx{d}} - \nabla \cdot \www{\xx{T}}_s (\xx{d}) = \xx f_s  && \text{in} \; \www \Omega_s, \\
    & \www{\xx{d}} = \xx{0}     && \text{on} \; \partial \www \Omega_s, 
    \end{align*}
where $\xx{T}_f (\xx{u}, p) = 2 \mu \xx{D}(\xx{u}) - p\xx{I}$ is the fluid Cauchy
stress tensor and $\www{\xx{T}}_s (\xx{d}) = 2 \mu_s \xx{D}(\www{\xx{d}}) + \lambda_s \nabla \cdot
\www{\xx{d}} \xx{I}$ is the solid first Piola-Kirchhoff stress tensor, with
$\xx{D} (\xx{w}) = 1/2 \left( \nabla \xx {w} + \nabla^T \xx{w} \right)$ and
$\lambda_s = \frac{E \nu}{(1+\nu)(1-2\nu)}$, $\mu_s = \frac{E}{2(1+\nu)}$ are
the Lam\'e parameters.

The structure problem is written in the reference configuration
$\www \Omega_s = \Omega_s(t=0)$, and all the related quantities are indicated
with the $\www{\cdot}$ notation.

\medskip

Given the time discretization parameter~$\Delta t > 0$, we indicate with~$t^n =n \Delta t$, $n \ge 0$, the $n$-th time step and indicate the approximation of the unknown~$u$ at time~$t^n$ by~$u^n$.
We introduce the fluid and solid meshes $\mathcal T_{f,h}^n$ and $\mathcal T_{s,h}^n$, respectively, of the fluid and solid domains $\Omega(t^n)$ and~$\Omega_s (t^n)$, respectively.
We denote the $(d-1)$-dimensional faces at time $t^n$ of the fluid and solid meshes by~$\mathcal{F}^n_{f,h}$ and~$\mathcal{F}^n_{s,h}$, respectively, except the set of faces composing the fluid-structure interface~$\Sigma$ at time~$t^n$, which are denoted by~$\mathcal{F}^n_{\Sigma,h}$.
Finally, $\xx V^{\ell,n}_h$ and~$Q^{m,n}_h$ are the fluid velocity and pressure spaces evaluated at time~$t^n$, defined as
\[
\left.
\begin{array}{l}
    \xx V^{\ell,n}_h = \{\xx v \in [L^2(\Omega (t^n))]^d:\,\xx v|_{K}\in
    [\mathcal{P}^\ell(K)]^d \; \forall K\in\mathcal{T}_{f,h}^n\},\\[0.2cm]
    Q^{m,n}_h = \{q \in L^2_0(\Omega (t^n)):\, q|_{K}\in
    \mathcal{P}^m(K) \; \forall K\in\mathcal{T}_{f,h}^n\}.
\end{array}
\right.
\]
The solid displacement space $\xx W^{\ell}_h$ evaluated in the reference configuration is defined as
\[
\left.
\begin{array}{l}
    \xx W^{\ell}_h = \{\xx w \in [L^2(\www{\Omega}_s )]^d:\,\xx w|_{K}\in
    [\mathcal{P}^\ell(K)]^d \; \forall K\in \www{\mathcal{T}}_{s,h}\}.
\end{array}
\right.
\]
We have assumed that the spatial polynomial order~$\ell$ is the same for both
the fluid velocity and the solid displacement.

Given $r\in\mathbb{N}^+$, we apply a Backward Difference Formula (BDF) scheme~\cite{hairer1993solving} of order $r$ both for the fluid and the solid subproblems.
We indicate the coefficients appearing in the approximation of the first and second order time derivatives with~$\xi_i$ and~$\zeta_i$, $i=0,\ldots,r$, respectively.

Define
\begin{align}
\begin{split}
A_{f,h}^n \left( \xx{u}_h^n, p_h^n ; \xx{v}_h , q_h \right)
&= \rho \left( \cfrac{\xi_0}{\Delta t} \xx{u}_h^n , \xx{v}_h\right)_{\Omega^n} + a_{f,h}^n \left( \xx{u}_h^n, \xx{v}_h \right) + b_{h}^n \left( p_h^n , \xx{v}_h \right)  - b_{h}^n \left( q_h , \xx{u}_h^n \right)\\
&\quad + s_h^n \left( p_h^n , q_h \right); \label{eq:fsiFluid}
\end{split}
\end{align}
\begin{align}
\begin{split}
A_{s,h}^n \left( \www{\xx{d}}_h^n , \www{\xx{w}}_h \right)
&= \rho_s \left( \cfrac{\zeta_0}{\Delta t^2} \www{\xx{d}}_h^n ,\www{\xx{w}}_h\right)_{\www\Omega_s} + a_{s,h} \left( \www{\xx{d}}_h^n , \www{\xx{w}}_h \right); \label{eq:fsiSolid}
\end{split}
\end{align}
\begin{align}
\begin{split}
A_{\Sigma,h}^n (\xx u_h^n, p_h^n
&, {\xx d}_h^n;\xx v_h, q_h, \xx w_h) = - \left( \delta \xx {T}_f ( \xx{u}_h^n , p_h^n ) \xx{n} + ( 1 -  \delta )\xx{T}_s (\xx{d}_h^n ) \xx{n} , \xx{v}_h - \xx{w}_h\right)_{\mathcal  F_{\Sigma,h}^n}\\
&- \left( \xx{u}_h^n - \cfrac{\xi_0}{\Delta t} \xx{d}_h^n , \delta \xx {T}_f  ( \xx{v}_h , -q_h ) \xx{n} + ( 1 - \delta ) \xx{T}_s (\xx{w}_h) \xx{n} \right)_{\mathcal F_{\Sigma,h}^n}\\
&+ \left( \sigma_\Sigma(\xx{u}_h^n - \cfrac{\xi_0}{\Delta t} {\xx d}_h^n),\xx{v}_h - \xx{w}_h \right)_{\mathcal F_{\Sigma,h}^n}; \label{eq:fsiInterface}
\end{split}
\end{align}
\begin{align}
\begin{split}
    F_h^n(\xx v_h,{\xx w}_h)
    & =  \rho \left( \sum_{i=1}^r \cfrac{\xi_i}{\Delta t} \xx{u}_h^{n-i} , \xx{v}_h\right)_{\Omega^n} + \rho_s \left( \sum_{i=1}^r \cfrac{\zeta_i}{\Delta t^2} \www{\xx{d}}_h^{n-i} , \www{\xx{w}}_h  \right)_{\www\Omega_s}\\
    &+ \left( \sum_{i=1}^r \cfrac{\xi_i}{\Delta t} \xx{d}_h^{n-i} , \delta \xx {T}_f ( \xx{v}_h , -q_h ) \xx{n} + ( 1 - \delta) \xx{T}_s (\xx{w}_h) \xx{n} \right)_{\mathcal F_{\Sigma,h}^n}\\
    &- \left( \sigma_\Sigma \sum_{i=1}^r \cfrac{\xi_i}{\Delta t} \xx{d}_h^{n-i} ,\xx{v}_h -  \xx{w}_h \right)_{\mathcal F_{\Sigma,h}^n} + \left( \xx f , \xx{v}_h\right)_{\Omega^n} + \left( \www{\xx f}_s , \www{\xx{w}}_h\right)_{\www\Omega_s}.  \label{eq:fsiRHS}
\end{split}
\end{align}
The fully-discrete PolyDG approximation reads as follows: given $\delta\in [0,1]$, $\sigma_{v}\in L^{\infty}(\mathcal F_{f,h}^n)$,
$\sigma_{p}\in L^{\infty}(\mathcal F_{f,h}^n)$, $\www{\sigma}_s\in L^{\infty}(\www{\mathcal F}_{s,h})$, $\sigma_{\Sigma}\in L^{\infty}(\mathcal F_{\Sigma,h}^n)$, $\xx f \in [L^2(\Omega_f^n)]^2$ and $\www{\xx f}_s\in [L^2(\www{\Omega}_s)]^2$,
for $n >0$, find $( \xx{u}_h^n, p_h^n, \www{\xx{d}}_h^n ) \in \xx{V}^{\ell,n}_h \times Q^{m,n}_h \times \xx{W}^\ell_h$, such that
\[
\begin{split}
A_{f,h}^n \left( \xx{u}_h^n, p_h^n ; \xx{v}_h , q_h \right) + A_{s,h}^n \left( \www{\xx{d}}_h^n , \www{\xx{w}}_h \right) + A_{\Sigma,h}^n (\xx u_h^n, p_h^n , {\xx d}_h^n; \xx v_h, q_h , \xx w_h) = F_h^n (\xx v_h,{\xx w}_h),
\end{split}
\]
for all $( \xx{v}_h, q_h, \www{\xx{w}}_h ) \in \xx{V}^{\ell,n}_h \times Q^{m,n}_h \times \xx{W}^\ell_h$.

In \eqref{eq:fsiFluid}, the pressure stabilization term $s_h^n :L^2_0 \times L^2_0 \rightarrow\R$ is that given in Section~\ref{sec:polydg} evaluated on $\mathcal{F}_{f,h}^{n,i}$.
In \eqref{eq:fsiSolid}, we have introduced the bilinear forms $a_{f,h}^n: [H^1(\mathcal{T}_{f,h}^n)]^d\times [H^1(\mathcal{T}_{f,h}^n)]^d \rightarrow \R$, $b_h^n: L^2_0 \times [H^1(\mathcal{T}_{f,h}^n)]^d \rightarrow \R$ and
$a_{s,h} : [H^1(\www{\mathcal{T}}_{s,h})]^d \times [H^1(\www{\mathcal{T}}_{s,h})]^d \rightarrow \R$, which are defined as
\begin{align*}
\begin{split}
a_{f,h}^n ( \xx{u}_h^n, \xx{v}_h ) &= \int_\Omega^n 2 \mu \xx{D}_h ( \xx{u}_h^n
): \nabla_h \xx{v}_h  - \displaystyle \sum_{F \in \mathcal F_{f,h}^n} \int_F 2 \mu
\{ \xx{D}_h ( \xx{u}_h^n ) \} : \jump{\xx{v}_h}\\
& - \sum_{F \in \mathcal F_{f,h}^n} \int_F 2 \mu \jump{\xx{u}_h^n} : \{
\xx{D}_h ( \xx{v}_h ) \} + \sum_{F \in \mathcal F_{f,h}^n} \int_F \sigma_v
\jump{\xx{u}_h} : \jump{\xx{v}_h},
\end{split}
\end{align*}
\begin{equation*}
b_h^n ( p_h^n, \xx{v}_h^n ) = - \int_\Omega^n p_h^n \nabla_h \cdot \xx{v}_h^n +
\sum_{F \in \mathcal F_{f,h}^n} \int_F \{ p_h^n \xx{I} \} : \jump{ \xx{v}_h^n},
\end{equation*}
\begin{align*}
\begin{split}
a_{s,h} (\www{\xx{d}}^n_h, \www{\xx{w}}_h) &=
\int_{\www{\Omega}_s} 2 \mu_s \xx{D}_h (\www{\xx{d}}_h^n) : \nabla_h \www{\xx{w}}_h +
\int_{\www{\Omega}_s} \lambda_s \nabla_h \cdot \www{\xx{d}}_h^n \, \nabla_h \cdot
\www{\xx{w}}_h\\
& - \displaystyle \sum_{F \in \www{\mathcal{F}}_{s,h}} \int_F 2 \mu_s \{
\xx{D}_h ( \www{\xx{d}}_h^n ) \} : \jump{\www{\xx{w}}_h} - \sum_{F \in
\www{\mathcal{F}}_{s,h}} \int_F \lambda_s \{ \nabla_h \cdot \www{\xx{d}}_h^n
\xx{I}\} : \jump{ \www{\xx{w}}_h }\\
& - \displaystyle \sum_{F \in \www{\mathcal{F}}_{s,h}}
\int_F 2 \mu_s \jump{ \www{\xx{d}}_h^n } : \{ \xx{D}_h ( \www{\xx{w}}_h ) \}
- \displaystyle \sum_{F \in \www{\mathcal{F}}_{s,h}} \int_F \lambda_s \jump{
\www{\xx{d}}_h^n } : \{ \nabla_h \cdot \www{\xx{w}}_h \xx{I} \}\\
& + \sum_{F \in \www{\mathcal{F}}_{s,h}} \int_F \www{\sigma}_s \jump{
\www{\xx{d}}_h^n } : \jump{\www{\xx{w}}_h},
\end{split}
\end{align*}
where $\xx{D}_h (\xx{w}) = 1/2 (\nabla_h \xx{w} + \nabla_h^T \xx{w})$.

The functions $\sigma_v$ and $\sigma_p$ are given in Definition~\ref{def:sigmav_sigmap} on $\mathcal{F}_{f,h}^n$,
while $\www{\sigma}_s : \mathcal{F}_{s,h}^n \rightarrow \R$ and $\sigma_{\Sigma} : \mathcal{F}_{\Sigma,h}^n \rightarrow \R$ are defined as
\begin{align*}
\www{\sigma}_s|_F =
\begin{cases}
\yy \gamma_s \max_{K^+,K^-} \left\{ \frac{\ell^2
\overline{\mathcal{C}}_{s,K}}{h_K}\right\} &  F \in \mathcal F_{s,h}^{n,i},\\
\yy \gamma_s \frac{\ell^2 \overline{\mathcal{C}}_{s,K}}{h_K} &  F \in \mathcal
F_{s,h}^{n,b},
\end{cases}\\
\sigma_{\Sigma}|_F =
\yy \gamma_{\Sigma} \max_{K^+,K^-} \left\{ \frac{\ell^2}{h_K} \left( \delta
\mu + (1 -\delta) \overline{\mathcal{C}}_{s,K}
\right) \right\} & \quad F \in \mathcal F_{\Sigma,h}^{n},
\end{align*}
with $\gamma_s, \gamma_\Sigma$ positive constants,
$\overline{\mathcal{C}}_{s,K} = \norm{\mathcal{C}_s|K}_{l^2}$ and
$\mathcal{C}_{s,K}$ the linear elasticity fourth order tensor.

The fluid domain $\Omega^n$ and the interface $\Sigma^n$ in \eqref{eq:fsiFluid}, \eqref{eq:fsiInterface}, and~\eqref{eq:fsiRHS} are unknown. Thus, they are approximated with extrapolations of order~$r$ of the domains at the previous time steps.

\begin{remark}
\label{rem:delta}
For the numerical stability of the FSI problem, theoretical results show that the parameter $\delta$ appearing in the interface terms of equation~\eqref{eq:fsiInterface} and~\eqref{eq:fsiRHS} has to be set equal to~$1$; see \cite{antonietti2019numerical}.
\end{remark}

\section{Numerical results} \label{sec:examplesFSI}
In this section, we present some numerical experiments for the steady Stokes problem and the time-dependent FSI problem.
In Section~\ref{sec:conv_stokes}, we assess the order of accuracy of the method for the steady Stokes problem as the spatial discretization parameter tends to zero and the spatial polynomial degree increases.
In Section~\ref{sec:pres_fsi}, we consider a FSI problem and we numerically compare the pressure field for different choices of the velocity and pressure spaces, with and without the pressure stabilization term.
Finally, in Section~\ref{sec:ex_fsi}, we show that the proposed PolyDG method is able to reproduce the expected dynamics of a time-dependent FSI problem.

For all the proposed examples, the resulting linear system corresponding to the Stokes and
the FSI problems is solved in Matlab by means of a direct method.

\subsection{The steady Stokes problem: convergence results}
\label{sec:conv_stokes}

Here, we numerically estimate the order of convergence of the steady Stokes problem with respect to the spatial parameter $h$ when it tends to zero and the spatial polynomial degree increases.

We consider a square unit domain $\Omega = \lbrack 0 ,1 \rbrack^2$ and the exact solution
\begin{equation*}
    \xx u_{ex} =
    \left[
    \arraycolsep=8pt\def\arraystretch{2}
    \begin{array}{c}
        - cos(2 \pi x) sin(2 \pi y)\\
         sin(2 \pi x) cos(2 \pi y)\\
    \end{array} \right],
    \qquad
    p_{ex} = 1 - e^{-x(x-1)(x-0.5)^2 - y(y-1)(y-0.5)^2}.
\end{equation*}
The forcing term~$\xx f$ and the Dirichlet boundary conditions are computed accordingly.
We picked $\xx {u}_{ex}$ so that $\nabla \cdot \xx u_{ex} = 0$.
We set~$\mu = 1$, $\gamma_v = 10$, $\gamma_p = 10$ and~$m = \ell = 4$.
In Figure~\ref{fig:conv_hp} (left), we plot the error in the~$L^2$ and $DG$-norms of the velocity, in the $L^2$-norm of the pressure and in the pressure semi-norm $\seminormJ{\cdot}$ versus $h \frac{1}{\sqrt{N_{el}}}$.
The expected order of convergence are found. In Figure~\ref{fig:conv_hp}
(right), we show the errors with respect to the polynomial degree~$m$, with an
underlying uniform and regular polygonal mesh, generated via \texttt{PolyMesher}
\cite{talischi2012polymesher}, consisting of $N_{el} = 160$ elements.
We observe exponential convergence in terms of the polynomial degree.

\begin{figure}[h]
    \centering
    \includegraphics[width=0.51\textwidth]{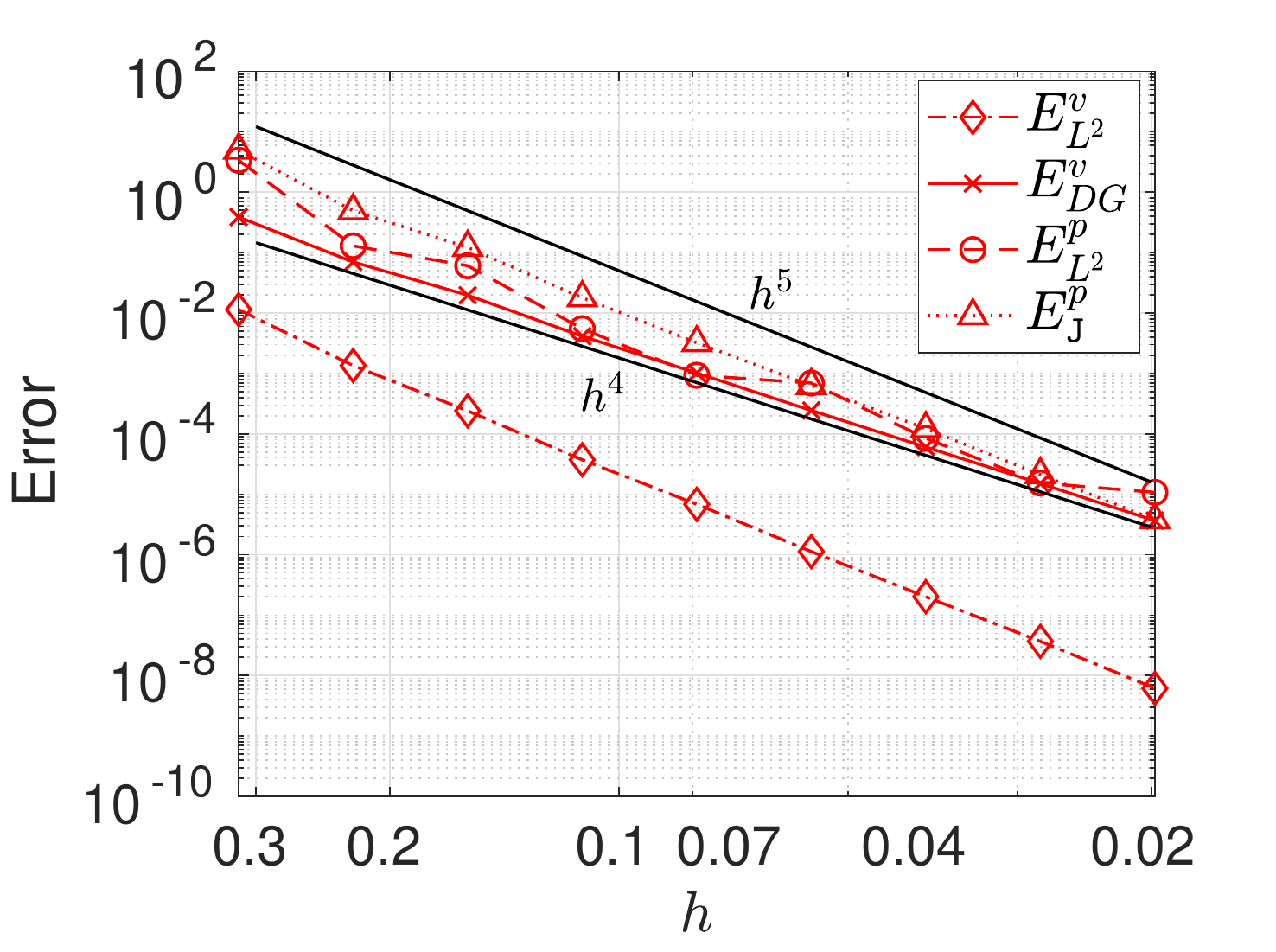}
    \hspace{-0.6cm}
    \includegraphics[width=0.51\textwidth]{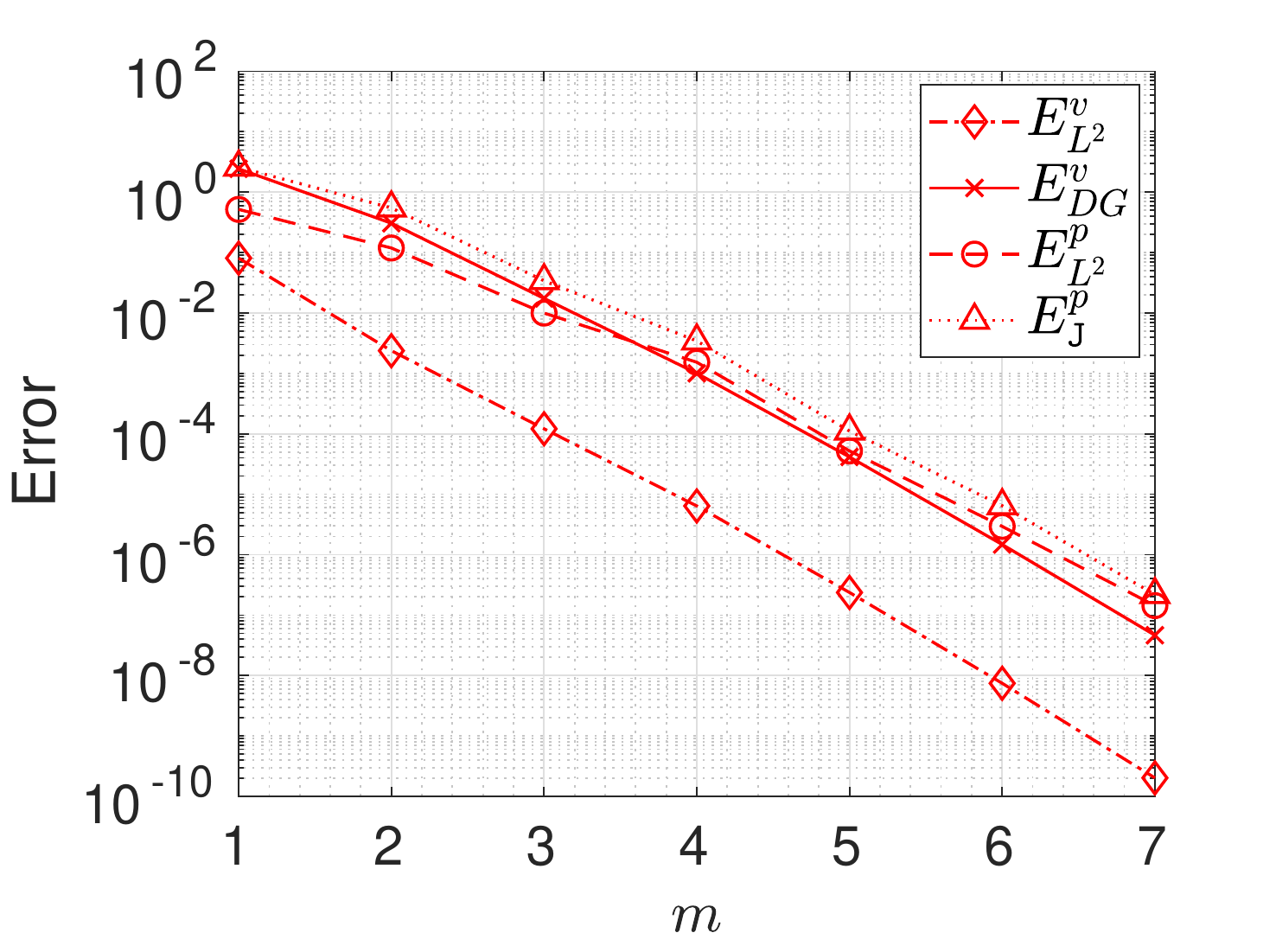}%
    \caption{Numerical estimates of the order of convergence with respect
    to the mesh size $h$ (left) and the polynomial degree $m$ (right).}
    \label{fig:conv_hp}
\end{figure}

\begin{remark}
Within the theoretical setting of the paper, we cannot prove the exponential convergence of the $p$-version of the method.
Notwithstanding, it is the expected behaviour in the standard Galerkin setting with simplicial and tensor product element meshes for analytic solutions; see, e.g., \cite{SchwabpandhpFEM} and the references therein.
The reason of this resides in the continuity property of the Stein extension operator~\eqref{continuity:Stein}, which is valid modulo a hidden constant depending on the involved Sobolev regularity~$s$.
In particular, when trying to recover exponential convergence, a term growing more than exponentially with respect to~$s$ appears.

A possible way to overcome this issue would be to resort to a different approach, where we assume that the solution is analytic over a slightly larger domain than~$\Omega$.
In particular, we should substitute the approximation result in Lemma~\ref{lm:localapproxTPi} with some approximation properties by means of tensor product Legendre polynomials on tensor product element and Koornwinder polynomials on simplicial elements;
see, e.g., \cite{SchwabpandhpFEM} and~\cite{braess2000approximation} for more details, respectively.
We avoid further details on this point, for it might render the understanding of the paper more cumbersome.

The suboptimality in terms of the polynomial degree
due to the nonrobustness of the \textit{inf-sup} condition, see
Remark~\ref{remark:suboptimality-convergence}, is eaten up by the expected
exponential convergence for analytic solutions.
\end{remark}

\subsection{The fluid-structure interaction problem: numerical comparison of the pressure fields}
\label{sec:pres_fsi}

In this first numerical test we compare the pressure field for different choices of the spatial polynomial degree of the discrete velocity and pressure spaces for a FSI problem.
The fluid domain $\Omega$ represents a viscous fluid with density $\rho = 1 \, \text{g/cm}^2$ and viscosity $\mu = 0.03 \, \text{g/s}$, while the structure domain $\Omega_s$ is a linear elastic barrier
that horizontally divides the fluid domain in two compartments; see Figure~\ref{fig:domainCompPres}.
For the structure we set the density $\rho_s = 1.2 \, \text{g/cm}^2$, the Young's modulus $E = 2 \cdot 10^4\, \text{dyne/cm}$, and the Poisson's ratio $\nu = 0.49$.

\begin{figure}[h]
    \centering
    \resizebox{0.7\textwidth}{!}{\input{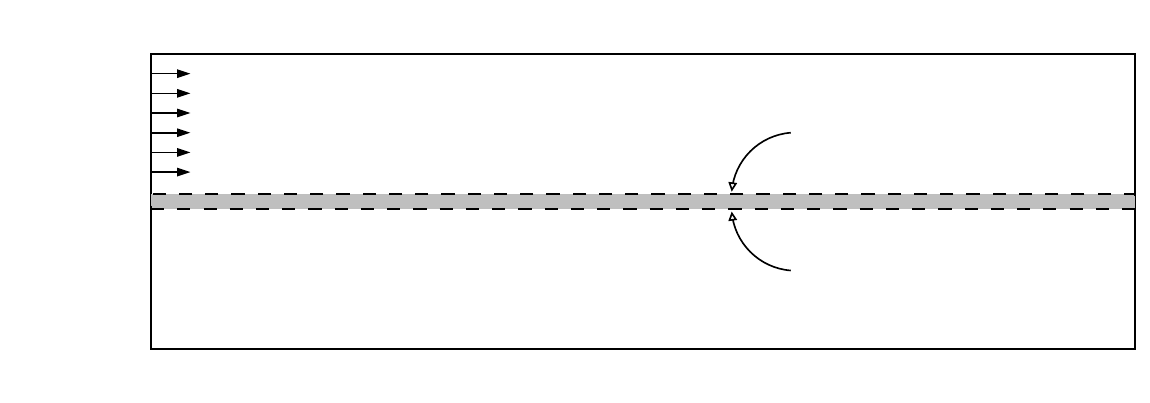_t}}%
    \caption{Setting of the boundary conditions on the fluid (white) and
    structure (grey) domains. The moving fluid-structure interface is depicted
    by dashed lines.}
    \label{fig:domainCompPres}
\end{figure}

At the upper and lower boundaries of the fluid domain, we impose zero velocity,
i.e., $\xx u = \xx 0$, and the barrier is fixed on the left and right sides, i.e., $\xx d = \xx 0$.
To the system, initially at rest, is prescribed an inlet velocity $\xx u_{in} (t)
= ( u_{in,x} (t) , 0 ) \, \text{cm/s}$ to the upper
compartment from the left boundary, where
$$
u_{in, x} (t) =
\begin{cases}
    10 t & t \le 0.1, \\
    0 & \text{otherwise},
\end{cases}
$$
while we prescribe a homogeneous Neumann condition to the other three ends of
the two compartments. The fluid-structure interface $\Sigma$, namely the upper
and lower boundaries of the barrier, is free to move; see
Figure~\ref{fig:domainCompPres}.

In the discrete setting, we set $\Delta t = 10^{-3} \, \text{s}$, $T = 0.25 \, \text{s}$, $\gamma_v = \gamma_p = 10$ and $\delta = 1$; see Remark~\ref{rem:delta}.
The fluid and structure meshes initially consist of uniform and regular triangles consisting of $1100$ elements ($h = 0.025\, \text{cm}$) and $400$ elements ($h = 0.01 \, \text{cm}$), respectively.
Due to their intersection, polygonal elements appear. We employ the Backward Difference Formula (BDF) scheme of order~$3$ for the temporal discretization.

We pick the pairs of velocity and pressure spaces $\mathcal P^{\ell} - \mathcal P^m$, with $\ell = 3$ and $m = 1, 2, 3$, both with and without the pressure stabilization term~\eqref{eq:form_s}.
The spatial polynomial order of discrete displacement field is set equal to $\ell = 3$.

In Figure~\ref{fig:pressureFields}, we plot the pressure field at time $t = 0.1 \, \text{s}$ for all the considered configurations.
As expected, for a fixed pair of spaces $\mathcal P^3 - \mathcal P^m$, $m=1,2,3$, the stabilized pair yields a stable and more regular pressure field compared to the not stabilized one.
The not stabilized $\mathcal P^3 - \mathcal P^3$ pair, Figure~\ref{fig:pressureField_33}, leads to an oscillating pressure field near the inlet boundary and all along the fluid-structure interface, where elements of general shape appear.
This instabilities become less evident as the pressure polynomial order decreases; see Figure~\ref{fig:pressureField_32} and Figure~\ref{fig:pressureField_31}.
On the other hand, the stabilized $\mathcal P^3 - \mathcal P^3$ pair, Figure~\ref{fig:pressureField_33_stab}, shows some oscillations only at the corners of the inlet boundary, where we expect a lower regularity in the solution, a pressure peak and strong pressure gradients.
For the stabilized $\mathcal P^3 - \mathcal P^2$ and $\mathcal P^3 - \mathcal P^1$ cases, see Figure~\ref{fig:pressureField_32_stab} and Figure~\ref{fig:pressureField_31_stab}, the pressure field does not present any noticeable oscillation.
Moreover, there is no significant difference in the fluid velocity and structure displacement fields.

\begin{figure}[!htbp]
    \centering
    \begin{subfigure}{0.46\textwidth}
    \includegraphics[width=\textwidth]{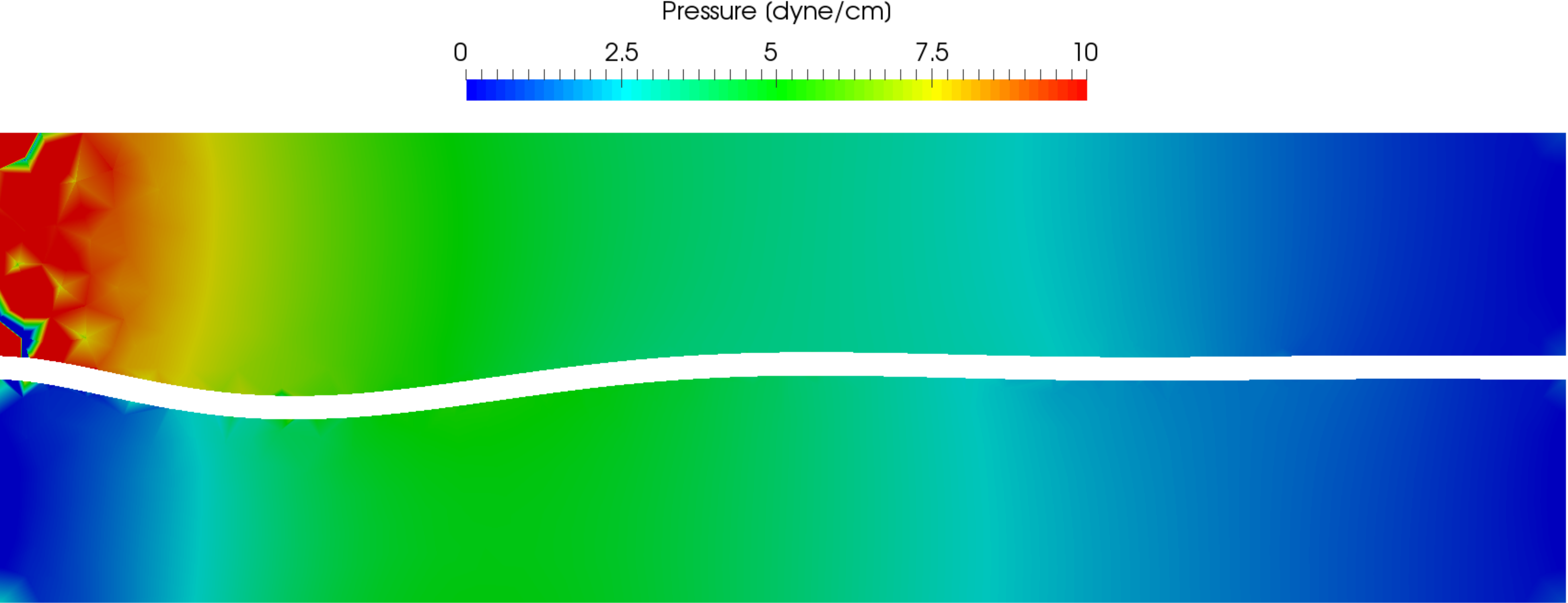}
    \caption{$\mathcal P^3 - \mathcal P^3$, stabilized}
    \label{fig:pressureField_33_stab}
    \end{subfigure}
    \begin{subfigure}{0.46\textwidth}
    \includegraphics[width=\textwidth]{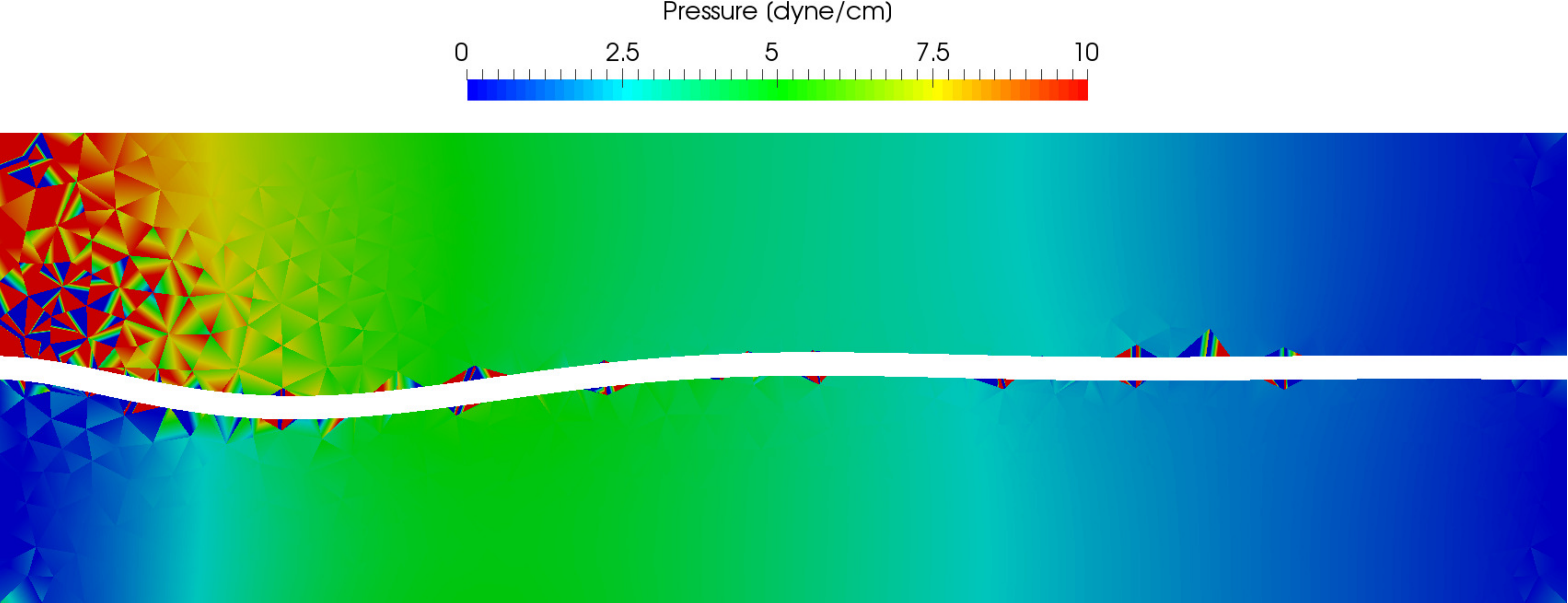}
    \caption{$\mathcal P^3 - \mathcal P^3$}
    \label{fig:pressureField_33}
    \end{subfigure}
    \begin{subfigure}{0.46\textwidth}
    \includegraphics[width=\textwidth]{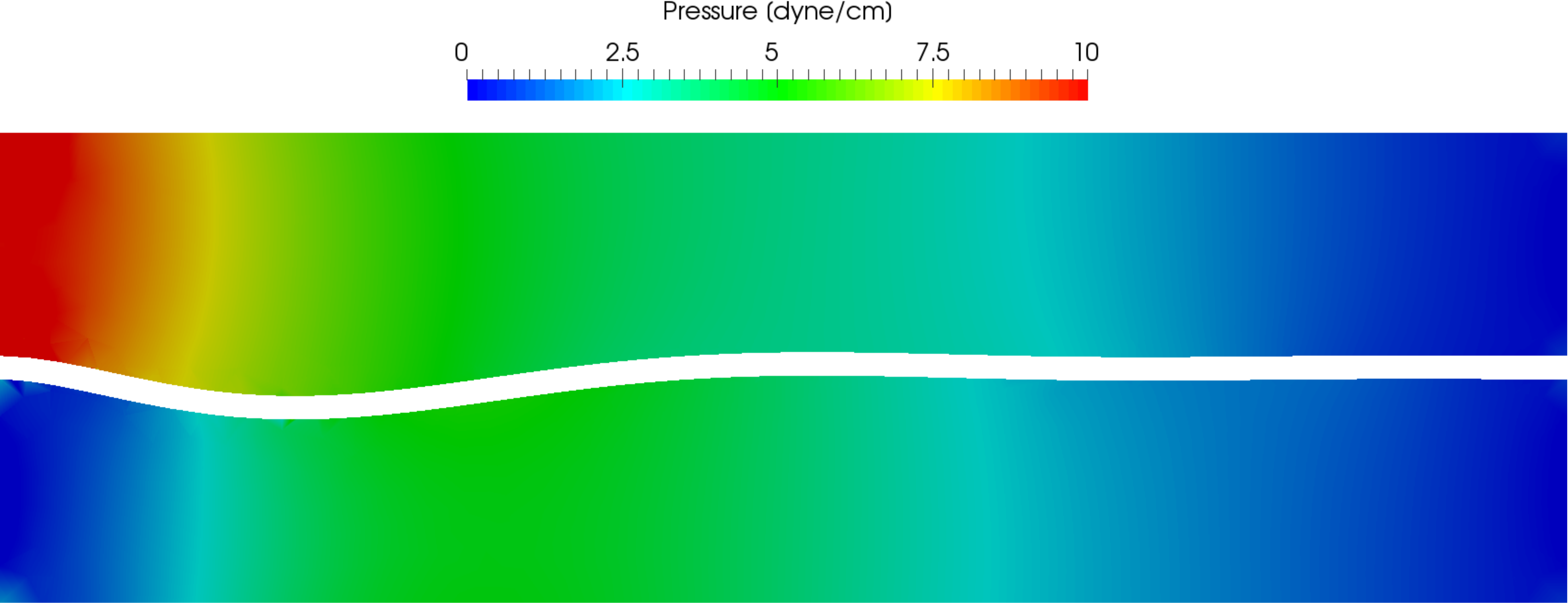}
    \caption{$\mathcal P^3 - \mathcal P^2$, stabilized}
    \label{fig:pressureField_32_stab}
    \end{subfigure}
    \begin{subfigure}{0.46\textwidth}
    \includegraphics[width=\textwidth]{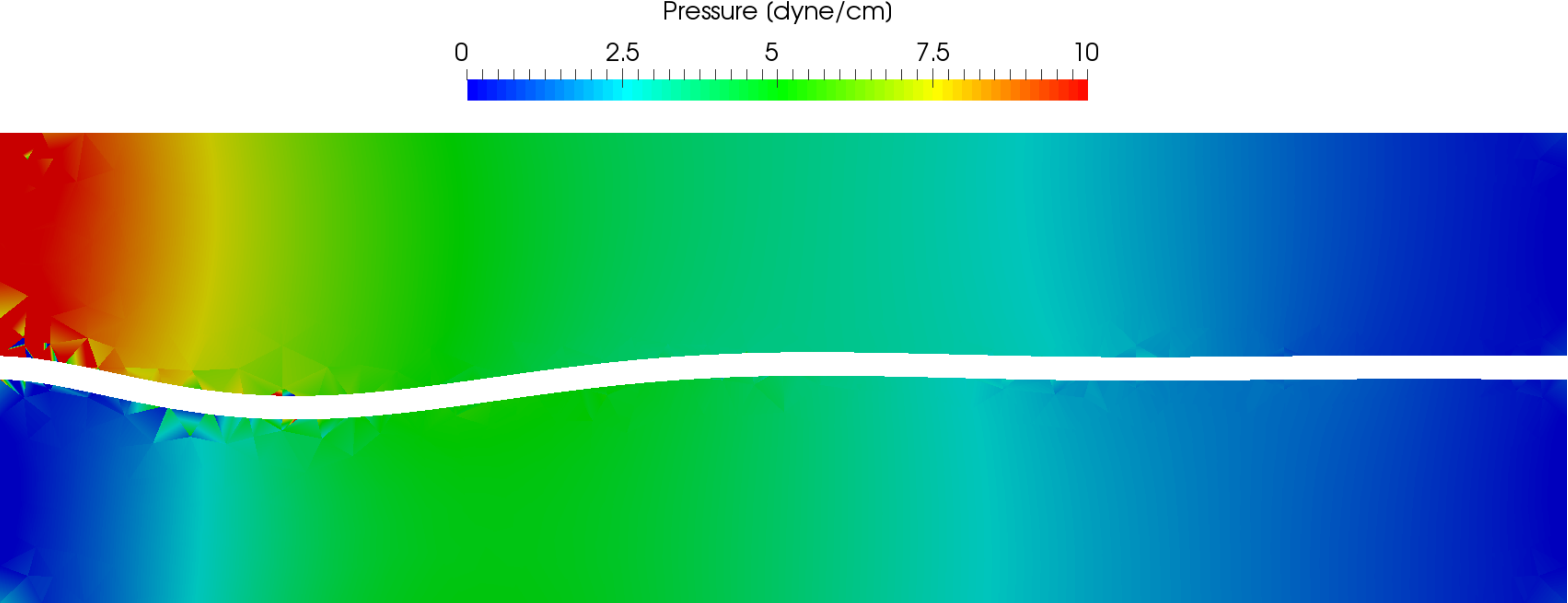}
    \caption{$\mathcal P^3 - \mathcal P^2$}
    \label{fig:pressureField_32}
    \end{subfigure}
    \begin{subfigure}{0.46\textwidth}
    \includegraphics[width=\textwidth]{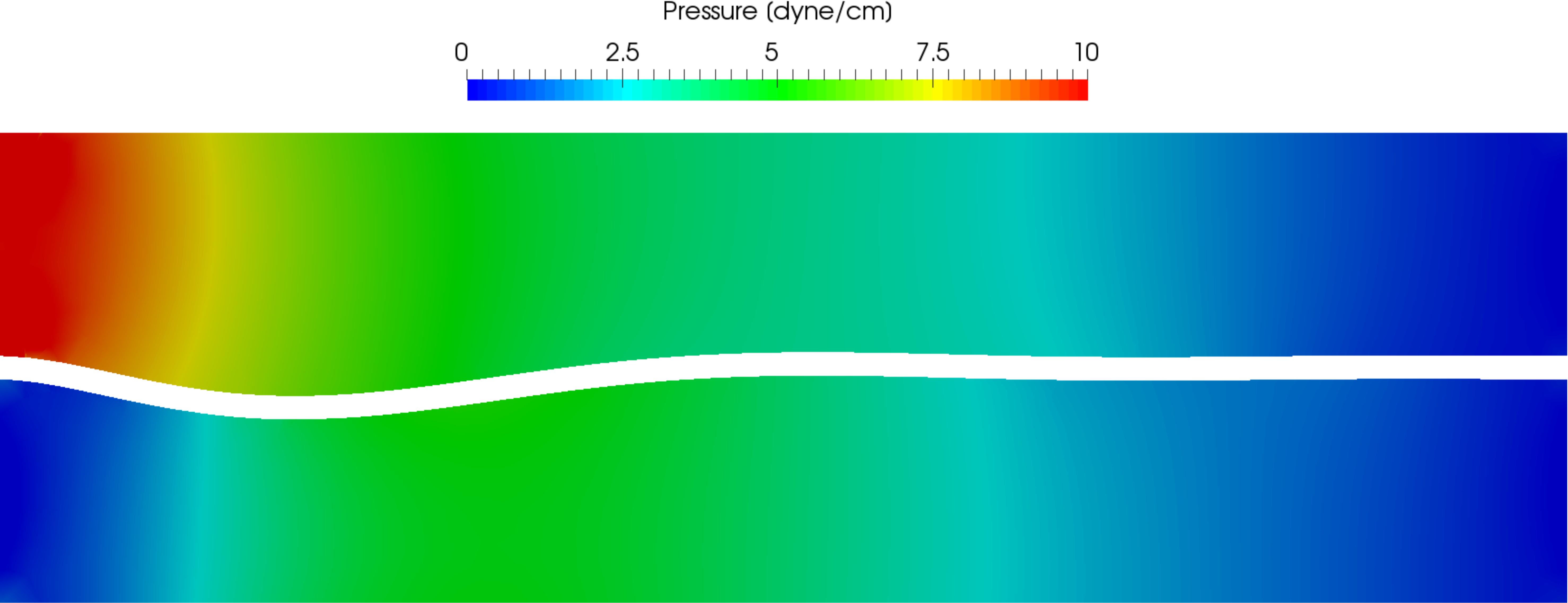}
    \caption{$\mathcal P^3 - \mathcal P^1$, stabilized}
    \label{fig:pressureField_31_stab}
    \end{subfigure}
    \begin{subfigure}{0.46\textwidth}
    \includegraphics[width=\textwidth]{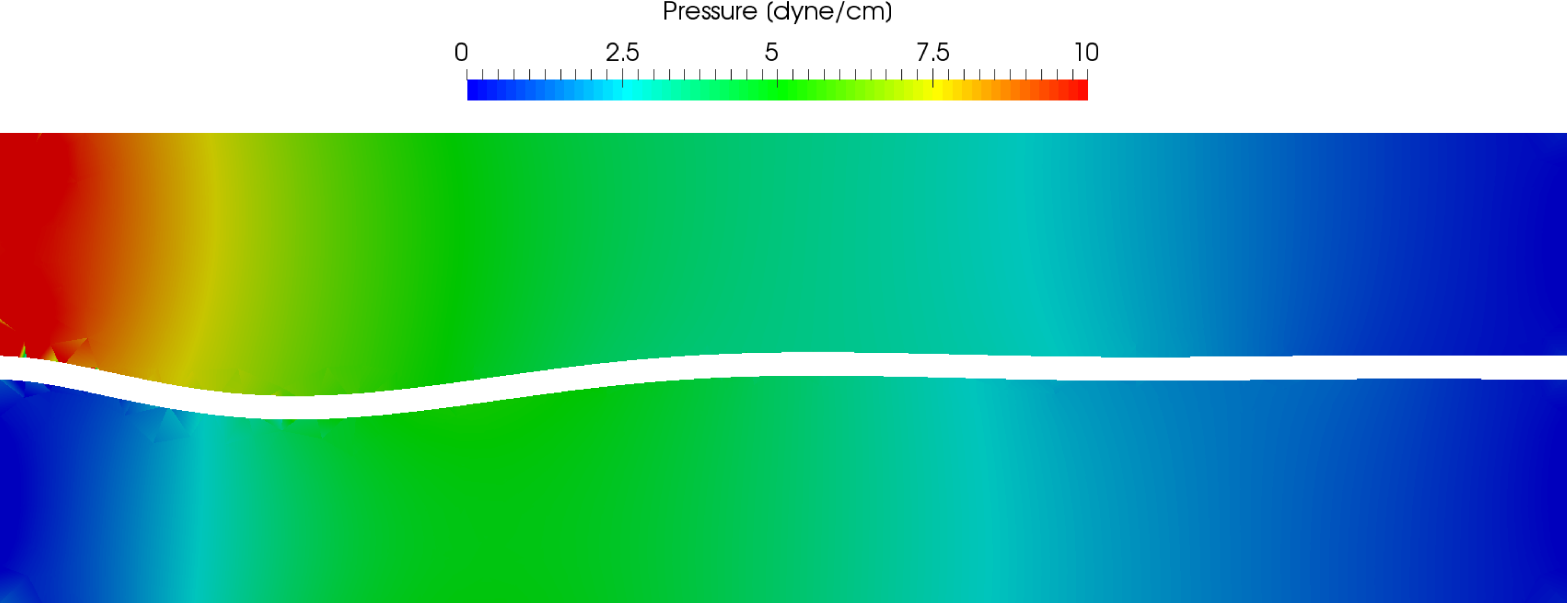}
    \caption{$\mathcal P^3 - \mathcal P^1$}
    \label{fig:pressureField_31}
    \end{subfigure}
    \caption{Pressure field at time $t = 0.1 \, \text{s}$ for different choices
    of the pressure space $\mathcal P^m$, $m= 1, 2, 3$, with pressure
    stabilization (left) and without stabilization (right). The velocity space
    is fixed to $\mathcal P^3$.}
    \label{fig:pressureFields}
\end{figure}

\subsection{The fluid-structure interaction problem: an elastic membrane in a pipe} \label{sec:ex_fsi}

Here, we consider a second fluid-structure interaction problem aiming
at showing that the proposed PolyDG discretization method is able to reproduce the expected dynamics of the
system with a stable pressure field.
More precisely, we consider a pipe filled by a viscous fluid with an immersed linear elastic
membrane that blocks the flow.  The pipe is represented by a fluid domain
$\Omega$ of size $0.4 \, \text{cm} \, \times 0.2 \, \text{cm}$, while the
solid domain $\Omega_s$ represents the elastic membrane of size $0.01
\,\text{cm} \, \times 0.2 \, \text{cm}$ centred in the pipe; see Figure~\ref{fig:blocked_meshes}.
At initial time, the system is at rest. The membrane is clamped at the pipe,
i.e., $\xx d = \xx 0$ on the upper and lower sides of $\Omega_s$.  At the top
and bottom boundaries of the fluid domain, $\xx u = \xx 0$, while on the left
and right sides we prescribe a jump in the stress, namely, $\xx \sigma_f
\xx n = (-10 , 0) \, \text{dyne/cm}$ and $\xx \sigma_f \xx n = \xx 0$,
respectively.
This induces oscillations in the structure, which are subsequently dumped
by the viscous fluid until a steady state is reached. At the steady state, we
expect a uniform pressure inside each chamber of the pipe.  The fluid and structure
have the following material properties: $\rho = \rho_s = 1 \, \text{g/cm}^2$,
$\mu = 0.1 \, \text{g/s}$, $E = 10^4 \, \text{dyne/cm}$ and $\nu = 0.45$.

For the numerical simulation, we consider a fluid mesh consisting of $1400$ elements ($h= 0.0125 \, \text{cm}$) and a solid mesh consisting of $400$ elements ($h = 0.004 \, \text{cm}$); see Figure~\ref{fig:blocked_meshes}.
Although the meshes are initially made of regular triangles, their intersection generates elements of general shape.
We consider the following discrete parameters: $\Delta t= 0.002 \, \text{s}$,
$\gamma_v = \gamma_p = 10$, $\delta = 1$, see Remark~\ref{rem:delta}, and $\ell
= 3$, $m = 2$ with pressure stabilization.
For the time discretization, we employ the $3$-rd order BDF scheme.

\begin{figure}[h]
    \centering
    \includegraphics[width=0.60\textwidth]{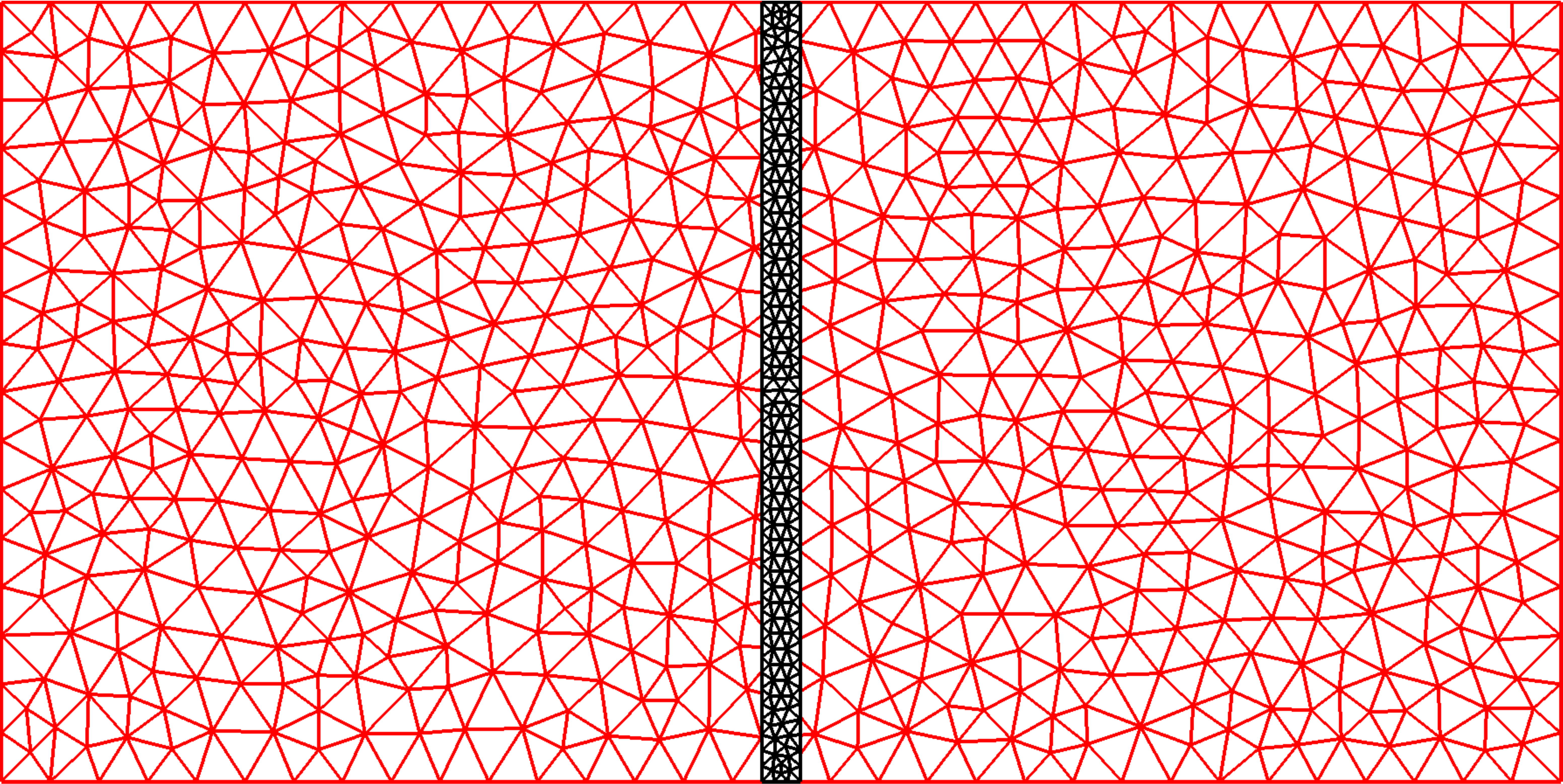}
    \caption{``Pipe'' test case. Fluid (red) and structure (black) meshes.}
    \label{fig:blocked_meshes}
\end{figure}

In Figure~\ref{fig:blocked_fig} (left), we show the configuration at the steady
state, $t = 1 \, s$. As expected, each of the two chambers of the pipe reach a
uniform value of the pressure. In Figure~\ref{fig:blocked_fig} (right), we plot
the $x$-displacement of the structure at its center of mass.

\begin{figure}[h]
    \centering
    \includegraphics[height=0.21\textheight]{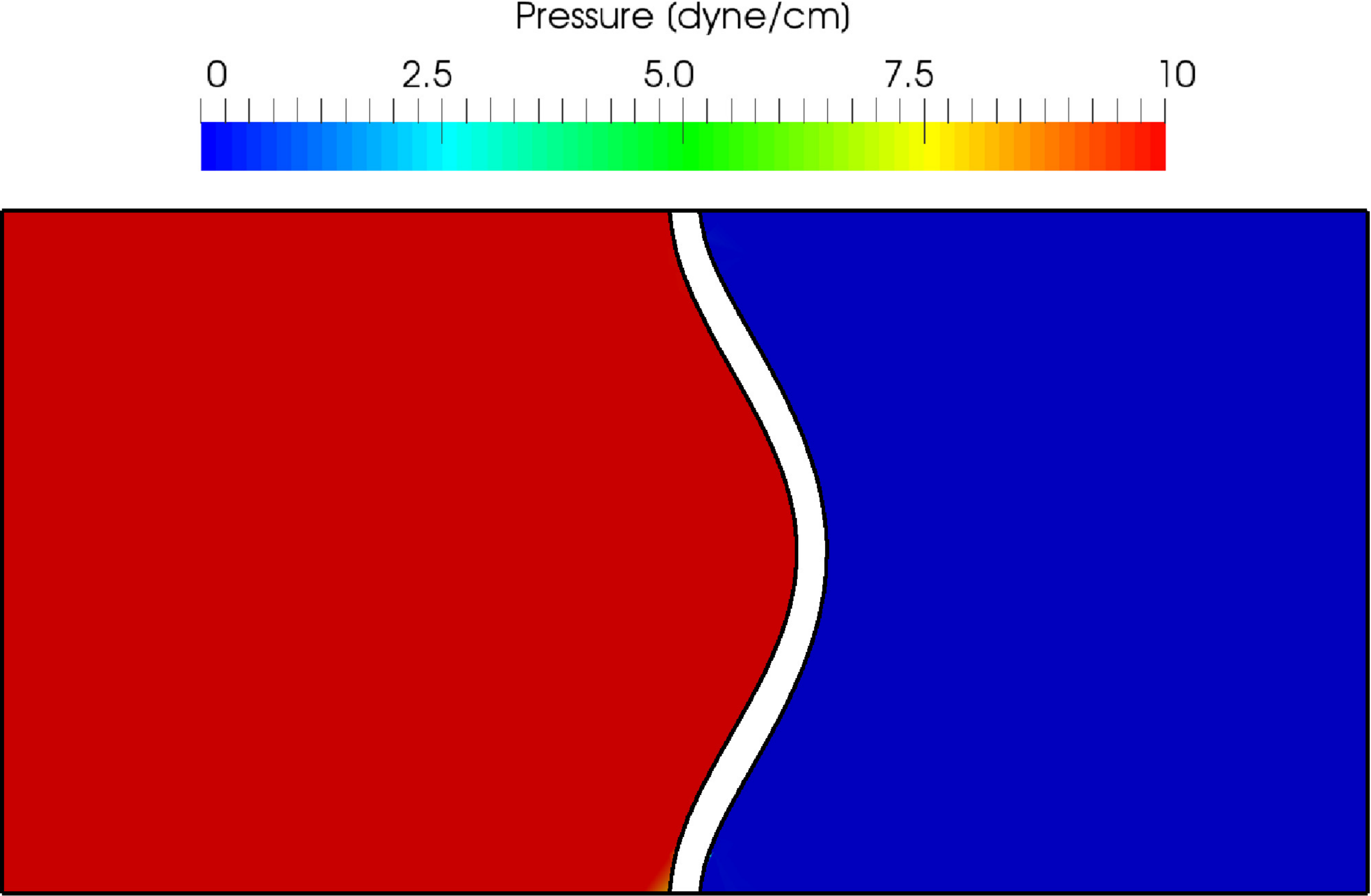}
    \includegraphics[height=0.21\textheight]{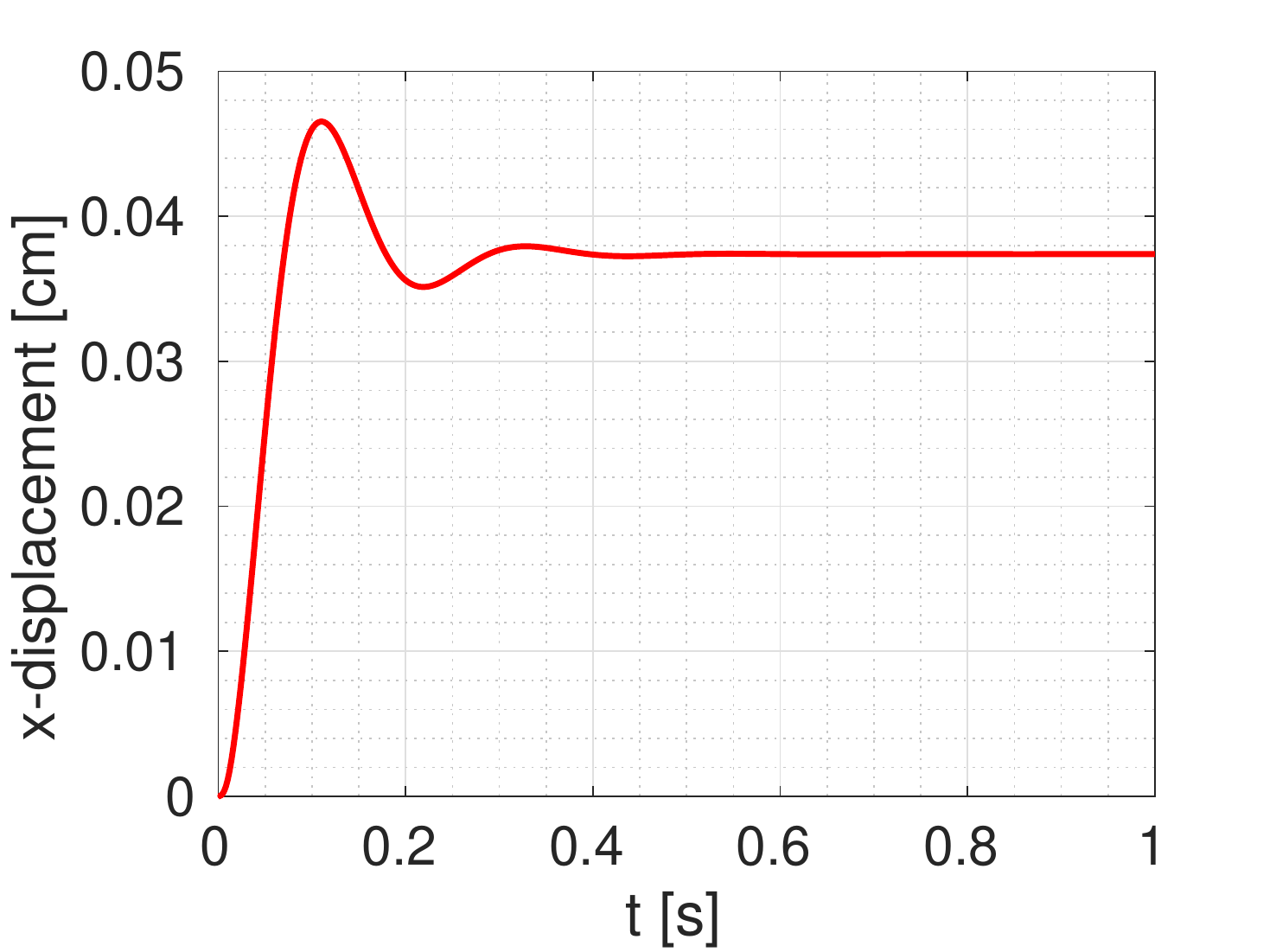}%
    \caption{``Pipe'' test case. Left: pressure field and position of the structure at the steady
    state ($t = 1 \, \text{s}$). Right: evolution in time of the
    $x$-displacement of the structure at the center of mass.}
    \label{fig:blocked_fig}
\end{figure}

\section{Conclusions}\label{sec:conclusions}

In this work, we showed the well-posedness of the discrete Stokes problem obtained via a Discontinuous Galerkin approximation for polygonal and polyhedral grids.
In particular, we proved a generalized \textit{inf-sup} condition that is valid for $m - \ell \le 1$, with $\ell$ and $m$ the spatial polynomial degrees for the velocity and pressure spaces, respectively.
Under suitable mesh assumptions, we proved that the discrete \textit{inf-sup} constant is uniform with respect to the mesh size and presents a mild dependence with respect to the spatial polynomial degree.
Moreover, from the numerical tests, the discrete \textit{inf-sup} constant seems to be independent of the size of the edges in much more general configurations than those addressed theoretically, indicating that the method is robust with respect to degenerating edges.
We also proved a priori error estimate in the energy norm for the Stokes problem that is suboptimal with respect to the polynomial degree, since it inherits the suboptimality of the discrete \textit{inf-sup} constant.
Finally, we presented numerical examples by considering a time-dependent fluid-structure interaction problem in the case of large displacement regime showing that the proposed PolyDG method is able to produce stable solutions.

\section*{Acknowledgments}
PFA, LM, MV and SZ are member of the INdAM Research group GNCS and this work is
partially funded by INDAM-GNCS. PFA, MV and SZ have been partially funded by the
PRIN Italian research grant n. 201744KLJL funded by MIUR.

\addcontentsline{toc}{chapter}{Bibliography}
\hyphenpenalty=10 %

\bibliographystyle{plain}
\bibliography{biblio}

\end{document}